\documentclass[11pt,a4paper]{amsart}
\usepackage{amsmath,amsfonts,amssymb,amscd,amsthm, tikz}
\usepackage[shortlabels]{enumitem}

\usepackage{color, soul}
\definecolor{gr}{rgb}{0.7, 1, 0.7}
\definecolor{rr}{rgb}{1, 0.7, 0.7}
\newcommand{\hlc}[2][yellow]{ {\sethlcolor{#1} \hl{#2}} }

\numberwithin{equation}{section}

\usepackage{latexsym}
\usepackage{graphicx}

\theoremstyle{plain} 
\newtheorem{theorem}{Theorem}[section]
\newtheorem{lemma}[theorem]{Lemma}

\newtheorem{proposition}[theorem]{Proposition}

\theoremstyle{definition} 
\newtheorem{definition}[theorem]{Definition}

\theoremstyle{remark} 
\newtheorem{remark}[theorem]{Remark}

\renewcommand{\mathfrak}{\mathbf}

\renewcommand{\Im}{\,\mathrm{Im}\,}

\newcommand{\ignore}[1]{}

\newcommand{\eps}{\varepsilon}

\newcommand{\cA}{\mathcal{A}}

\newcommand{\bbC}{\mathbb{C}}
\newcommand{\bbH}{\mathbb{H}}

\newcommand{\bbN}{\mathbb{N}}
\newcommand{\bbR}{\mathbb{R}}
\newcommand{\bbZ}{\mathbb{Z}}
\newcommand{\bbQ}{\mathbb{Q}}
\newcommand{\bbT}{\mathbb{T}}

\newcommand{\tl}{\tilde}
\newcommand{\QQ}{\mathbb Q}
\newcommand{\NN}{\mathbb N}
\newcommand{\CC}{\mathbb C}
\newcommand{\ZZ}{\mathbb Z}

\newcommand{\RR}{\mathbb R}
\newcommand{\TT}{\mathbb T}
\newcommand{\cR}{\mathcal R}
\newcommand{\cI}{\mathcal I}

\newcommand{\cC}{\mathcal C}

\newcommand{\cH}{\mathcal H}

\newcommand{\dist}{\operatorname{dist}}
\newcommand{\diam}{\operatorname{diam}}

\renewcommand{\Im}{\operatorname{Im}}
\renewcommand{\mod}{\operatorname{mod}}

\begin{document}
\title[Rigidity of multicritical circle maps]{Rigidity of analytic and smooth bi-cubic multicritical circle maps with bounded type rotation numbers}

\begin{abstract}
We prove that if two analytic multicritical circle maps with the same bounded type rotation number are topologically conjugate by a conjugacy which matches the  critical points of the two maps while preserving the orders of their criticalities, then the conjugacy necessarily has $C^{1+\alpha}$ regularity, where $\alpha$ depends only on the bound on the type of the rotation number. 
We then extend this rigidity result to $C^3$-smooth bi-cubic circle maps.
\end{abstract}

\author{Igors Gorbovickis}
\address{Jacobs University, Bremen, Germany}
\email{i.gorbovickis@jacobs-university.de}

\author{Michael Yampolsky}
\address{University of Toronto, Toronto, Canada}
\email{yampol@math.toronto.edu}

\thanks{I.G. was partially supported by the Deutsche Forschungsgemeinschaft (DFG, German Research Foundation) – project number 455038303. M.Y. was partially supported by NSERC Discovery Grant.}

\subjclass[2010]{}
\keywords{}

\date{\today}
\maketitle

\section{Introduction}

\subsection{Rigidity and renormalization of circle maps with critical points}
The study of rigidity of circle homeomorphisms is one of the most classical and most central questions of low-dimensional dynamics, which ties together such subjects as the KAM theory and Renormalization theory. Its history can be traced to the celebrated early result by Denjoy~\cite{Denjoy} who showed that if $f$ and $g$ are two smooth diffeomorphisms of the circle whose derivatives have bounded variation, and the rotation numbers $\rho(f)=\rho(g)\notin \QQ$, then there exists a homeomorphism of the circle $\phi$ such that
\begin{equation}
  \label{eq:conj}
  \phi\circ f=g\circ\phi.
  \end{equation}
The condition $\rho(f)=\rho(g)\notin \QQ$ is known as {\it combinatorial equivalence}; by the results of Poincar\'e it means that the order in which  the points in every orbit  are arranged  on the circle is the same for the two maps. 
In general, we speak of smooth rigidity, when in some class of circle maps, a weak notion of equivalence, such as combinatorial equivalence, leads to the existence of a conjugacy in the appropriate smoothness class.

Rigidity question has been one of the main motivations for the development of  Renormalization theory of {\it critical circle maps}, that is, smooth homeomorphisms of the circle with a single critical point, originating in \cite{FKS,ORSS}. In the analytic setting, this Renormalization theory was completed by the second author \cite{Ya3,Ya4,Yampolsky_unicritical}, who, in particular, established that for any two analytic $f$ and $g$ with the critical points of the same order and with $\rho(f)=\rho(g)\notin\QQ$, the distance between renormalizations $\cR^nf$, $\cR^n g$ decreases geometrically fast.  Together with D.~Khmelev \cite{KhYam}, he further showed that for two analytic critical circle maps $f$ and $g$ with the same irrational rotation number geometric convergence of renormalizations  is equivalent to the existence of a conjugacy $\phi$ in (\ref{eq:conj}) which is $C^{1+\alpha}$-smooth at the critical point. Previous results about convergence of renormalizations and $C^{1+\alpha}$-smoothness of the conjugacy $\phi$ were obtained by de~Faria and de~Melo in~\cite{FM1,FM2} for rotation numbers of bounded type (irrational numbers of bounded type have bounded terms in their continued fraction expansions, which is equivalent to the Diophantine condition of order $2$).

In the study of  circle homeomorphisms with critical points, there is an important distinction between the case of rotation numbers of bounded type and the general (unbounded type) case. For instance, in the bounded type case,
$C^{1+\alpha}$-smoothness of the conjugacy at the critical point propagates to the whole circle, which is no longer true in the unbounded type case. Moreover, in the case of rotation numbers of bounded type, Guarino and de~Melo \cite{GdM} extended the $C^{1+\alpha}$-rigidity result to critical circle maps of class $C^3$ with critical points of an odd integer order; the situation is considerably more complex in the unbounded type case (see \cite{GMdM}).

In the present paper we consider the case of {\it multicritical circle maps}, which possess several critical points.
For a survey of some of the previous results on multicritical circle maps, see  \cite{dFGuar}.
For such maps, the equality of the rotation number and of the number and the degeneracy of each of the critical points can not be sufficient for a rigidity statement to hold. It is clearly necessary that critical points with the same criticality, as well as their iterates, appear in the same order on the circle: a condition we discuss below in some detail, and characterize by {\it a signature} of the map (cf,~Definition~\ref{signature_def}).


We settle two rigidity conjectures: one for the general case analytic maps of bounded type, the other for a more narrow class of $C^3$-smooth maps.
We first prove that renormalizations of two analytic multicritical circle maps of bounded type with the same rotation number and the same signature
converge together geometrically fast. This has long been conjectured based on the analogous result for unicritical maps \cite{FM2,KhYam,Ya4}, but has only been shown previously in \cite{Est_Smania_Yampolsky_2020} for maps with two cubic critical points (bi-cubic maps). The argument we give is fundamentally different from that of \cite{Est_Smania_Yampolsky_2020}, as it does not rely on the existence of rigid model families of maps with given criticalities. From this, we derive the $C^{1+\alpha}$-rigidity result for all analytic multicritical maps of bounded type.

We next turn our attention to $C^3$-smooth maps. Here, we develop a new approach to proving rigidity based on hyperbolicity of the renormalization horseshoe in the analytic class. Hyperbolicity was recently shown for bi-cubic maps of bounded type by G.~Estevez and the second author \cite{EsYam2021}. Building on this, we prove that geometric convergence of renormalizations and $C^{1+\alpha}$-rigidity hold for $C^3$-smooth bi-cubic maps of bounded type. We note that our approach to smooth rigidity is quite different from that previously  used for smooth unicritical maps in \cite{GdM}.
It should be applicable to maps with an arbitrary criticality, once the hyperbolicity of the renormalization horseshoe is known.


\subsection{Statements of the main results}
Let us abbreviate a finite or infinite continued fraction with coefficients $r_i\in\NN$, $i\geq 0$ as $[r_0,r_1,\ldots]$.  For an irrational number $\rho\in(0,1)$ there is a unique infinite expansion
$$\rho=[r_0,r_1,\ldots],$$
and the $n$-th continued fraction convergent of $\rho$ is given by
$$p_n/q_n=[r_0,\ldots,r_{n-1}].$$
A number $\rho\in(0,1)\setminus\QQ$ is of \emph{bounded type} if the coefficients $r_i$ in its expansion are bounded. We will say that $\rho$ is of a {\it type  bounded by} $B>0$ if
$$\sup r_i<B.$$
This is, of course, equivalent to the second order Diophantine condition:
\begin{equation}\label{bounded_type_def}
\left|\rho-\frac{p}{q}\right|\geq\frac{M}{q^2}\quad\mbox{for any integers $p$ and $q \neq 0$}
\end{equation}
for some $M=M(B)$.

In the theorems below, the total criticality of a map refers to the product of the criticalities of its critical points. The latter ones are always assumed to be odd integers.
Here are our principal renormalization results:

\begin{theorem}[{\bf Renormalization convergence for analytic multicritical circle maps}]
  \label{th:rencov2}
For any positive integers $B>0$ and $D\ge 3$, there exists a constant $\lambda=\lambda(B, D)\in (0,1)$ that satisfies the following property: given two analytic multicritical circle maps $f$ and $g$ with the same signature, the same irrational rotation number $\rho$ of combinatorial type bounded by $B$ and the total criticality $D(f)=D(g)$ not greater than $D$, there exists $C>0$ such that for all $n\in\bbN$,
$$
\dist_{C^0}\big(\mathcal{R}^n(f),\mathcal{R}^n(g)\big) \leq C\lambda^n.
$$
\end{theorem}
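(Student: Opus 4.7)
The plan is to adapt the Yampolsky--de~Faria--de~Melo--Khmelev strategy for unicritical critical circle maps to the multicritical setting, resting on three pillars: complex a~priori bounds, compactness of renormalization orbits, and a McMullen-type deep-point argument that upgrades $C^0$ convergence to geometric convergence. Throughout, the bounded-type assumption and the bound $D$ on the total criticality play the role of uniform geometric control.

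\textbf{Step 1 (complex bounds and compactness).} First I would invoke (or re-establish in the required uniform form) complex a~priori bounds for analytic multicritical circle maps. Concretely: there exists $N_0=N_0(f,g)$ such that for $n\ge N_0$ the pairs $\cR^n f$ and $\cR^n g$ extend to \emph{holomorphic commuting pairs} on definite complex neighbourhoods of their dynamical intervals, carrying critical points of the prescribed orders in the cyclic order prescribed by the signature, with conformal moduli of the associated fundamental annuli uniformly bounded below by a constant depending only on $(B,D)$. These bounds are available in the multicritical setting via the work of de~Faria--Guarino and its antecedents. Let $\cH_{B,D}$ denote the resulting space of normalized holomorphic commuting pairs. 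By Koebe distortion plus normality, $\cH_{B,D}$ is sequentially compact in the Carath\'eodory topology and the renormalization operator $\cR$ acts continuously on it.

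\textbf{Step 2 (combinatorial rigidity and $C^0$ convergence).} Next I would establish a \emph{combinatorial rigidity} statement inside $\cH_{B,D}$: two pairs $F,G\in\cH_{B,D}$ with the same signature and the same infinite sequence of rotation-number coefficients must coincide. The standard route is a quasiconformal pull-back argument: complex bounds produce a $K$-quasiconformal conjugacy between $F$ and $G$ on the dynamical domain; one refines it on successive renormalization windows to obtain a conjugacy that is conformal off the post-critical set; holomorphic removability of the post-critical set in bounded type (a consequence of the geometric control from Step~1) then forces the conjugacy to be conformal, hence trivial after normalization. Combined with the compactness of $\cH_{B,D}$, this yields $\dist_{C^0}(\cR^n f,\cR^n g)\to 0$.

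\textbf{Step 3 (geometric rate via deep points).} Finally I would upgrade the $C^0$ convergence to geometric convergence by a McMullen-type deep-point argument. Complex bounds supply $K$-quasiconformal conjugacies $\psi_n$ between $\cR^n f$ and $\cR^n g$ with $K$ independent of $n$ and with Beltrami coefficients supported outside a definite neighbourhood of the critical points of the previous pair. Because the rotation number is of bounded type, each critical point of $\cR^n f$ is a \emph{deep point} of the post-critical set on scale $O(\mu^n)$ for some $\mu=\mu(B,D)\in(0,1)$. A standard integral estimate (Koebe plus averaging of the Beltrami coefficient) then shows that passing to the next renormalization diminishes the effective dilatation by a definite factor, giving $K_n=1+O(\lambda^n)$ for some $\lambda=\lambda(B,D)\in(0,1)$; Koebe distortion translates this into the announced bound $\dist_{C^0}(\cR^n f,\cR^n g)\le C\lambda^n$.

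\textbf{Main obstacle.} The hard part, and the reason an argument going beyond \cite{Est_Smania_Yampolsky_2020} is required, is Step~3 in the presence of critical points of \emph{different} orders. One cannot appeal to a rigid model family with a preferred criticality; instead, the deep-point geometry must be proved simultaneously at every critical point of the commuting pair, using only intrinsic complex bounds. Controlling the interaction between critical points of distinct criticalities under renormalization---and, in particular, ensuring that each of them sits deep in the post-critical set at a uniform geometric rate depending only on $(B,D)$---is the delicate technical heart of the argument.
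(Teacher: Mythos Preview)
Your broad strategy---complex bounds, a quasiconformal conjugacy, and a McMullen-style deep-point argument---matches the paper's route, but the execution differs in important ways, and your Step~2 contains a real gap.

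The paper does \emph{not} pass through an intermediate combinatorial rigidity or $C^0$-convergence step. Your Step~2 asserts that a quasiconformal conjugacy between two holomorphic pairs with the same signature must be conformal because the post-critical set is ``holomorphically removable.'' This is not substantiated: the invariant Beltrami differential is supported on the filled Julia set, not merely the post-critical set, and showing it vanishes would amount to ruling out invariant line fields---a hard problem the paper sidesteps entirely. Instead, the paper goes directly from a single $K$-quasiconformal conjugacy (Theorem~\ref{qc_conjugacy_theorem}) to $C^{1+\beta}$-conformality of that conjugacy at the marked critical point $0$, via McMullen's dynamical inflexibility theorem (Theorem~\ref{McM_theorem}). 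That conformality at $0$ immediately gives the geometric rate.

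Applying McMullen's theorem requires two inputs you do not isolate: (i) the holomorphic pair $\cH$ is \emph{uniformly twisting} with respect to its Julia set $J(\cH)$, and (ii) $0$ is $\delta$-deep in $J(\cH)$ with $\delta=\delta(B,D)$. For (ii), one first needs $K(\cH)=J(\cH)$ (Proposition~\ref{K_is_J_proposition}), which is nontrivial in the multicritical case: it requires constructing a Blaschke-product model for every signature (Lemma~\ref{model_family_lemma}) and invoking Sullivan's no wandering domains theorem on that rational model (Lemma~\ref{no_wandering_lemma}). The $\delta$-depth then follows from hyperbolic-metric expansion estimates (Proposition~\ref{delta_deep_proposition}). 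For (i), uniform twisting is established by producing, at every point of $J(\cH)$ and every scale, a $\lambda$-nonlinear branch of the dynamics of bounded degree and definite nonlinearity (Proposition~\ref{small_J_everywhere_proposition}); this, rather than depth at several critical points, is the technical heart.

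Your ``main obstacle'' is therefore misidentified: depth is needed only at the \emph{marked} critical point $0$, not at every critical point. The interaction between critical points of different orders enters through the complex bounds and the uniform-nonlinearity proposition, not through separate depth estimates.
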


\noindent
From this, we derive:
\begin{theorem}[{\bf Rigidity for analytic multicritical circle maps}]
  \label{thm-rigidity-analytic}
  For any two 
  analytic multicritical circle maps $f$ and $g$ with the same irrational rotation number $\rho$ of bounded type and the same signature,
  there exists a conjugacy $\phi$ which is $C^{1+\alpha}$-smooth, where $\alpha>0$ depends on $M$ from~(\ref{bounded_type_def}) and on the total criticality $D(f)$.
\end{theorem}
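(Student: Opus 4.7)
The plan is to derive the rigidity statement from the renormalization convergence result (Theorem~\ref{th:rencov2}) in the spirit of Khmelev--Yampolsky~\cite{KhYam} and de~Faria--de~Melo~\cite{FM2}, now adapted to the presence of several critical points. Because $f$ and $g$ share the same irrational rotation number and the same signature, a topological conjugacy $\phi$ with $\phi\circ f=g\circ\phi$ exists and matches critical points of equal criticality. For each $n\in\bbN$, let $\mcP_n(f)$ denote the dynamical partition at level $n$, generated by the orbits of the critical points of $f$ up to combinatorial time $q_n$, and set $\mcP_n(g)=\phi(\mcP_n(f))$. Invoking the real \emph{a priori} bounds for analytic multicritical circle maps (see~\cite{dFGuar} and references therein), these partitions have uniformly bounded geometry; because $\rho$ is of type bounded by $B$, the diameters of all atoms of $\mcP_n(f)$ are in addition bounded above by $C\mu^n$ for some $\mu=\mu(B,D(f))\in(0,1)$, and adjacent atoms are comparable in size.

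Next, I would convert the $C^0$-convergence $\dist_{C^0}(\mathcal{R}^n f,\mathcal{R}^n g)\le C\lambda^n$ supplied by Theorem~\ref{th:rencov2} into quantitative control of the ratios of corresponding atoms. Each atom $I\in\mcP_n(f)$ lies, after an affine rescaling by the size of a nearby fundamental interval, inside the domain of an $n$-th renormalization of $f$ around a suitable critical point (chosen compatibly with the signature). Combining the $C^0$-closeness of $\mathcal{R}^n f$ and $\mathcal{R}^n g$ with the compactness of renormalization orbits, and using analyticity plus Koebe-type distortion control on a definite complex neighborhood of $I$ to pass from $C^0$ to $C^1$ information, one should obtain an estimate of the form
\begin{equation*}
\left|\frac{|\phi(I)|}{|I|}-\frac{|\phi(J)|}{|J|}\right|\le C'\lambda^n,
\end{equation*}
valid for any two atoms $I,J\in\mcP_n(f)$ lying in a common renormalization window. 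This is the multicritical analogue of the central estimate of~\cite{KhYam}, and must be carried out simultaneously around every critical point of the signature.

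Once this estimate is in hand, the conclusion follows by a routine argument. For each $x\in S^1$, the sequence $\bigl(|\phi(I_n(x))|/|I_n(x)|\bigr)_n$, where $I_n(x)\in\mcP_n(f)$ is an atom containing $x$, is Cauchy at geometric rate $\lambda^n$, so $\phi'(x)$ exists as its limit; the assignment is continuous because adjacent atoms are comparable. If $|x-y|\asymp\mu^n$, bounded type places $x$ and $y$ in atoms of $\mcP_{n-k}(f)$ for some universally bounded $k$, and the above estimate yields $|\phi'(x)-\phi'(y)|\le C''\lambda^n\le C'''|x-y|^{\alpha}$ with H\"older exponent $\alpha=\log(1/\lambda)/\log(1/\mu)$, which depends only on $M$ and $D(f)$ as required. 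The main obstacle is the second paragraph: one must match atoms of the $f$- and $g$-partitions through the purely topological $\phi$ and then translate a $C^0$ estimate on the renormalized maps into effectively $C^1$ control on ratios of interval lengths, uniformly across all critical points of the signature and all renormalization windows of a given depth; the multicritical setting is more delicate than in~\cite{KhYam}, since the choice of critical point around which to renormalize may vary from atom to atom.
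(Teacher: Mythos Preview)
Your approach is essentially the paper's: deduce a ratio estimate on adjacent atoms of the dynamical partition from Theorem~\ref{th:rencov2}, then conclude $C^{1+\alpha}$-smoothness of the conjugacy. The paper is considerably more economical, simply invoking Proposition~4.3 of de~Faria--de~Melo~\cite{FM1} (restated as Proposition~\ref{dFdM43_prop}) as a black box in place of your third paragraph; the hypothesis there is the estimate
\[
\left|\frac{|I|}{|J|}-\frac{|\phi(I)|}{|\phi(J)|}\right|\le C\lambda^n
\]
for each pair of \emph{adjacent} atoms $I,J\in\cI_n(f)$.

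Two simplifications are worth noting. First, the paper works throughout with the partition $\cI_n(f)$ generated by the orbit of the \emph{single marked} critical point at $0$; the remaining critical points influence the geometry of the atoms but not the combinatorics of the partition, so your concern about renormalizing around varying critical points and stitching together estimates across windows does not arise in this framework. Second, the passage from $C^0$-closeness of $\cR^n f$ and $\cR^n g$ to the ratio estimate above does not require upgrading to $C^1$ information via Koebe: the ratio $|I|/|J|$ for adjacent atoms at level $n$ is read off directly from the positions of finitely many orbit points in the rescaled coordinates of a nearby renormalization, and these positions are controlled by the $C^0$-distance alone.
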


\noindent
In the $C^3$-smooth category, we show:
\begin{theorem}[{\bf Renormalization convergence for $C^3$ bi-cubic maps}]
  \label{th:rencov1}
For any positive integer $B>0$, there exists a constant $\lambda=\lambda(B)\in (0,1)$ that satisfies the following property: given two $C^3$ bi-cubic circle maps $f$ and $g$ with the same signature and the same irrational rotation number $\rho$ of combinatorial type bounded by $B$, there exists $C>0$ such that for all $n\in\bbN$,
$$
\dist_{C^0}\big(\mathcal{R}^n(f),\mathcal{R}^n(g)\big) \leq C\lambda^n.
$$

\end{theorem}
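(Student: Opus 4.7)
The strategy is to reduce Theorem~\ref{th:rencov1} to the analytic convergence Theorem~\ref{th:rencov2} by exploiting the hyperbolicity of the renormalization operator on the analytic bi-cubic horseshoe established in~\cite{EsYam2021}. Given $C^3$ bi-cubic maps $f$ and $g$ with common signature and bounded-type rotation number $\rho$, the plan is to associate to each an analytic bi-cubic \emph{shadow} $\tilde f, \tilde g$ (with the same $\rho$ and signature) lying on the horseshoe, and to show that renormalizations of $f$ and $\tilde f$ remain exponentially close in $C^0$. The triangle inequality
\[
\dist_{C^0}(\cR^n f, \cR^n g)
\le \dist_{C^0}(\cR^n f, \cR^n \tilde f)
+ \dist_{C^0}(\cR^n \tilde f, \cR^n \tilde g)
+ \dist_{C^0}(\cR^n \tilde g, \cR^n g),
\]
combined with Theorem~\ref{th:rencov2} applied to the analytic pair $(\tilde f, \tilde g)$, then finishes the proof.

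The first step is to establish real a priori bounds in the $C^3$ bi-cubic setting: the sequence $\{\cR^n f\}$ lies in a precompact family of multicritical commuting pairs, with comparable interval lengths at the $n$-th dynamical partition and controlled nonlinearity/Schwarzian for the renormalization branches. These bounds, analogous to those known for $C^3$ unicritical maps, are where the $C^3$ hypothesis and the cubic criticality enter (Schwarzian control for cubic critical branches is what makes the bi-cubic case tractable in the smooth category). Combining compactness with extraction of an analytic accumulation point of $\cR^n f$, and pulling it back along the renormalization tower, produces the analytic shadow $\tilde f$ with the prescribed rotation number and signature.

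The heart of the argument is then to show that $f$ lies on the local $C^3$ stable manifold of $\tilde f$, with exponential contraction rate depending only on $B$. Hyperbolicity of $\cR$ on the analytic horseshoe provides exponential contraction along the stable direction inside the analytic class; the task is to extend this stable lamination to a neighborhood in the Banach manifold of $C^3$ bi-cubic commuting pairs and to verify that $f$ lies on the stable leaf through $\tilde f$. I expect the principal obstacle to lie precisely here: the renormalization operator is not a $C^1$ self-map in the $C^3$ category in any classical Banach sense, so a stable manifold statement in a setting with only weak differentiability must be developed. The real bounds, together with distortion/Schwarzian estimates specific to bi-cubic branches, would be used to establish the transversal regularity of $\cR$ needed to place $f$ on the stable plaque through $\tilde f$; applying the same construction to $g$ and invoking the analytic Theorem~\ref{th:rencov2} on $(\tilde f,\tilde g)$ then closes the argument.
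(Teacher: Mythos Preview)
Your high-level architecture --- produce analytic shadows $\tilde f,\tilde g$ with the same signature, prove $\dist_{C^0}(\cR^n f,\cR^n\tilde f)\to 0$ geometrically, and then invoke Theorem~\ref{th:rencov2} on the analytic pair via the triangle inequality --- matches the paper's. The gap is exactly where you place it: you propose to extend the analytic stable lamination to a $C^3$ neighborhood and put $f$ on the stable leaf through $\tilde f$, but you offer no mechanism for this, and as you yourself note, $\cR$ is not differentiable on the $C^3$ category in any usable Banach sense. Your sentence ``extract an analytic accumulation point of $\cR^n f$ and pull it back along the tower'' hides a second issue: accumulation points of renormalizations of a $C^3$ pair are not obviously analytic, since complex bounds are proved for analytic pairs.

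The paper does \emph{not} attempt any stable-manifold argument in $C^3$; it circumvents both problems simultaneously. First, a Shuffling-Lemma-type interpolation (Theorem~\ref{analytic-convergence}) approximates $\cR^k f$ by an \emph{almost commuting} pair $\zeta_k$ (analytic maps commuting only to finite order at $0$) geometrically in $k$. Second, a commutator-contraction estimate (Theorems~\ref{thm-comm} and~\ref{thm-near-comm2}) shows that renormalizations of almost commuting pairs converge geometrically to genuine analytic commuting pairs. Combining these with the Lipschitz bound (Lemma~\ref{lem:lip}) along a diagonal of indices yields a $\delta$-pseudo-orbit $\{\tilde\zeta_n\}\subset\mathcal I_B$ with $\dist_{C^0}(\cR^n f,\tilde\zeta_n)\le M\lambda^n$. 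Hyperbolicity from~\cite{EsYam2021} then enters \emph{only} through the Shadowing Lemma on the analytic attractor $\mathcal I_B$: the pseudo-orbit is shadowed by a true orbit $\{\cR^n f_m\}$, and Lemma~\ref{combinatorics_vs_distance_lemma} certifies that $f_m$ has the correct signature. No extension of the stable foliation to lower regularity is ever needed.
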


\noindent
As a consequence of Theorem~\ref{th:rencov1} we obtain:
\begin{theorem}[{\bf Rigidity for $C^3$ bi-cubic maps}]\label{main_rigidity_theorem}
  For any pair of  $C^3$-smooth bi-cubic circle maps $f$ and $g$ with the same irrational rotation number $\rho$ of bounded type and the same signature,
  there exists a conjugacy $\phi$ which is $C^{1+\alpha}$-smooth, where $\alpha>0$ depends only on the bound $B$ on the type.
  \end{theorem}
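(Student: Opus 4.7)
The plan is to deduce Theorem~\ref{main_rigidity_theorem} from Theorem~\ref{th:rencov1} along the standard pipeline that converts exponential renormalization convergence into $C^{1+\alpha}$-rigidity in the bounded type setting, adapted from the unicritical arguments of de~Faria--de~Melo and Khmelev--Yampolsky to the bi-cubic multicritical situation. The starting point is the topological conjugacy $\phi$ produced by Poincar\'e's theorem; using the common signature one can compose with a rotation so that $\phi$ sends each cubic critical point of $f$ to the corresponding cubic critical point of $g$, and this is the normalization that will be shown to be $C^{1+\alpha}$.

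The first substantive step is to combine Theorem~\ref{th:rencov1} with real \emph{a priori} bounds for $C^3$ multicritical circle maps to obtain an exponential matching of the dynamical partitions $\mathcal{P}_n(f)$ and $\mathcal{P}_n(g)$. Concretely, I expect to show that for any two adjacent atoms $I,J\in\mathcal{P}_n(f)$,
\[
\left|\frac{|\phi(I)|/|I|}{|\phi(J)|/|J|} - 1\right|\le C\lambda^n,
\]
with constants depending only on $B$. This is essentially the statement that the affine rescaling which sends the fundamental domain of $f$ at level $n$ to that of $g$ intertwines $\mathcal{R}^n(f)$ and $\mathcal{R}^n(g)$ up to $C^0$-error $O(\lambda^n)$, and then bounded geometry of $\mathcal{P}_n$ transfers the error from the renormalized coordinate back to the original circle.

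Next, I would upgrade this ratio estimate to $C^{1+\alpha}$-regularity of $\phi$ at each critical point $c$. The atoms $I_n(c)\ni c$ shrink geometrically at a rate $\mu<1$ depending only on $B$, by the real bounds. The previous step then implies that $\phi$ restricted to $I_n(c)$ is, after affine normalization of domain and image, $O(\lambda^n)$-close in $C^0$ to the identity; a standard telescoping argument converts this into the H\"older estimate $|\phi'(x)-\phi'(c)|\le C|x-c|^\alpha$ with $\alpha=\log(1/\lambda)/\log(1/\mu)$. Finally, I would propagate $C^{1+\alpha}$ from the critical set to all of $S^1$ by the bounded-type argument: every point is the image of a point in a definite neighborhood of some critical point under a uniformly bounded iterate of $f^{\pm 1}$, and the $C^3$ hypothesis supplies the cross-ratio (Schwarzian-type) distortion bounds needed to transport the H\"older control on $\phi'$ from the critical neighborhood without degrading the exponent.

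The main obstacle I anticipate is the last propagation step in the $C^3$ category, where complex-analytic tools such as quasiconformal extension and holomorphic motions, available in the analytic setting of Theorem~\ref{thm-rigidity-analytic}, are unavailable and must be replaced by real cross-ratio distortion estimates. This is precisely the role of the $C^3$ hypothesis and of bounded type: without $C^3$ one loses control of the Schwarzian quantities that make the distortion bounds work, while without bounded type the number of iterates required to reach a critical neighborhood is not uniform, and, as noted in the introduction, $C^{1+\alpha}$-regularity at a critical point need no longer propagate to the whole circle.
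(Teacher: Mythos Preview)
Your first step---turning Theorem~\ref{th:rencov1} plus real \emph{a priori} bounds into the exponential ratio estimate
\[
\left|\frac{|I|}{|J|}-\frac{|\phi(I)|}{|\phi(J)|}\right|\le C\lambda^n
\]
for adjacent atoms $I,J\in\mathcal I_n(f)$---is exactly what the paper does. The divergence is in what comes next. You propose a two-stage argument: first prove $C^{1+\alpha}$-regularity of $\phi$ at each critical point by a telescoping estimate, then propagate to the whole circle via bounded iterates and cross-ratio distortion. The paper instead invokes Proposition~\ref{dFdM43_prop} (a restatement of Proposition~4.3 of de~Faria--de~Melo~\cite{FM1}) as a black box: that proposition converts the ratio estimate directly into global $C^{1+\beta}$-smoothness of $\phi$, with $\beta$ depending only on $\lambda$ and $B$, without singling out the critical points at all. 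In particular, your anticipated obstacle---that the propagation step might require complex-analytic tools unavailable in the $C^3$ category---does not arise: the paper uses the \emph{same} real-variable Proposition~\ref{dFdM43_prop} for both the analytic rigidity Theorem~\ref{thm-rigidity-analytic} and the $C^3$ case, so no quasiconformal or holomorphic-motion machinery enters the rigidity step in either setting. Your route is not wrong, but your steps~3--4 amount to re-proving the content of Proposition~\ref{dFdM43_prop} by hand, and the propagation you describe is more delicate than you suggest (one must be careful when the bounded iterate passes through a critical point, since $f^k$ is then not a local diffeomorphism). Citing the de~Faria--de~Melo result directly is both shorter and cleaner.
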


\begin{remark}
We remark that our proofs in the smooth case carry over to the case of smooth bi-critical circle maps of an arbitrary criticality with the assumption that the attractor of renormalization is hyperbolic.
\end{remark}

The proofs of Theorems~\ref{th:rencov2}~and~\ref{thm-rigidity-analytic} will occupy the bulk of the paper, with the punchlines delivered in \S~\ref{subsec-proofs-analytic}. The proofs of  $C^3$-smooth renormalization convergence and rigidity are carried out in \S~\ref{sec:reduction}.

\subsection{Notation}
When we refer to the circle in this paper, we mean the affine manifold $\bbT=\bbR\slash\bbZ$, naturally identified with the unit circle $S^1$ via the exponential map $z\mapsto e^{2\pi i z}$.
We say that $f\colon\bbT\to\bbT$ is a $C^k$-smooth, $k\geq 3$ (respectively analytic) \textit{multicritical circle map}, if it is a $C^k$-smooth (respectively analytic) orientation preserving circle homeomorphism with a finite non-zero number of non-flat critical points. The latter means that for each critical point $c\in\bbT$ there exists $d\in 2\bbN+1$, called the criticality of $c$, such that in some neighborhood of the point $c$, the map $f$ can be represented as
$$
f(x) = f(c) + (\phi(x))^d,
$$
for some local $C^k$-smooth (respectively analytic) diffeomorphism $\phi$, satisfying $\phi(c)=0$. In particular, any analytic orientation preserving homeomorphism $f\colon\bbT\to\bbT$ is either a diffeomorphism or an analytic multicritical circle map.

We will say that two positive numbers $A, B\in\bbR$ are \textit{commensurable} (denoted by $A\asymp B$) if they satisfy 
$$
\frac{A}{C}\le B\le C A,
$$
for some \textit{a priori} constant $C>1$.

\section{Renormalization of multicritical circle maps}
\subsection{Combinatorics of multicritical circle maps}

For a multicritical circle map $f$, we will always assume that the rotation number $\rho(f)$ is irrational, unless otherwise specified. The following was proven by Yoccoz:

\begin{theorem}[\cite{YocDen}]\label{rotation_conjugacy_theorem}
Suppose $f$ is a $C^3$-smooth multicritical circle map with $\rho(f)\not\in\bbQ$. Then the set of all forward iterates of any point $x\in\bbT$ under the dynamics of $f$ is dense in the circle. Hence, $f$ is topologically conjugate to the rigid rotation by the angle $\rho(f)$.
\end{theorem}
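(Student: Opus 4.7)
The plan is to reduce to Poincar\'e's classical dichotomy by ruling out wandering intervals. Recall that for any orientation-preserving circle homeomorphism $f$ with $\rho(f)\notin\QQ$, the following are equivalent: every forward orbit is dense in $\bbT$; $f$ is topologically conjugate to the rigid rotation $R_{\rho(f)}$; and $f$ admits no wandering interval, i.e.\ no non-degenerate $J\subset\bbT$ whose iterates $\{f^n(J)\}_{n\in\bbZ}$ are pairwise disjoint. The second assertion of the theorem follows at once from the first via the standard Poincar\'e semiconjugacy argument, so the whole statement reduces to proving the absence of wandering intervals.

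Assume for contradiction that a wandering interval $J$ exists. Expand $\rho(f)=[r_0,r_1,\ldots]$ and let $q_n$ be the closest-return times. The forward iterates $f^i(J)$, $0\le i<q_n+q_{n-1}$, are pairwise disjoint, so in particular $\sum_i |f^i(J)|\le 1$. The next step is to place $J$ inside a well-chosen neighborhood $V_n$ drawn from the $n$-th dynamical partition of $\bbT$ by preimages of the critical orbits, and analyze the restriction $f^{q_n}|_{V_n}$. Any standard argument (as in Denjoy) would conclude if one controlled the distortion of $\log Df^{q_n}$, but here $Df$ vanishes at critical points. The substitute is a cross-ratio distortion bound: for a conformally invariant cross-ratio $\mathrm{cr}(a,b,c,d)$ on $\bbT$, one estimates the cross-ratio distortion of $f^{q_n}|_{V_n}$ by decomposing the orbit into passes that either avoid the finitely many critical points of $f$ or cross one of them exactly once, using that the iterates are disjoint to bound the total number of crossings of each critical point by a universal combinatorial constant depending only on $r_n$ and $r_{n-1}$.

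The key inputs for this distortion estimate are (i) a cross-ratio inequality for $C^3$ diffeomorphisms, obtained from the Schwarzian derivative/Koebe principle, which bounds distortion on disjoint families of intervals in the diffeomorphic regime; and (ii) a local normal form $f(x)=f(c)+\phi(x)^d$ near each critical point $c$ with $d\in 2\bbN+1$ and $\phi$ a $C^3$ local diffeomorphism, which allows one to replace the passage through $c$ by the action of $x\mapsto x^d$ up to a uniformly bounded cross-ratio factor. Combining (i) and (ii) over the (at most $O(1)$) critical passes along the $q_n$-orbit of $V_n$ yields a uniform bound
\[
\mathrm{cr}\text{-distortion of } f^{q_n}|_{V_n} \le K(B),
\]
and therefore $|f^{q_n}(J)|\asymp |J|$ with constants independent of $n$. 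This contradicts $|f^{q_n}(J)|\to 0$, which must hold since the iterates $f^i(J)$, $i\ge 0$, are disjoint subsets of $\bbT$.

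The main obstacle is the cross-ratio control near the critical set: naive derivative estimates fail wherever an iterate visits a critical point, so the whole argument hinges on organizing the orbit $\{f^i(V_n)\}_{0\le i<q_n}$ so that critical passes are counted sharply (the combinatorics of irrational rotation numbers of bounded and unbounded type both admit such a counting) and on the compatibility of the chosen cross-ratio with the polynomial-type singularity at a non-flat critical point. Once these two ingredients are in place, the conclusion is immediate from the disjointness of iterates.
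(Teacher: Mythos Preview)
The paper does not prove this theorem at all; it is simply quoted as a known result of Yoccoz with the citation [YocDen] and no argument is given. So there is no ``paper's proof'' to compare your proposal against.

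As for the proposal itself: the overall strategy---reduce to the absence of wandering intervals and rule those out via cross-ratio distortion, combining Schwarzian/Koebe control on diffeomorphic stretches with the power-law normal form near each non-flat critical point---is indeed the architecture of Yoccoz's proof. Two points, however, are not quite right as written. First, you bound the distortion by a constant $K(B)$, which suggests a dependence on a bound $B$ on the continued-fraction coefficients; but the theorem is stated for \emph{all} irrational $\rho(f)$, and Yoccoz's argument does not assume bounded type. The count of critical passes along a $q_n$-orbit is indeed uniformly bounded (each critical point is visited at most once by the $q_n$ disjoint intervals), but the remaining estimate must not depend on $r_n$. Second, the step from ``cross-ratio distortion of $f^{q_n}|_{V_n}$ is bounded'' to ``$|f^{q_n}(J)|\asymp |J|$'' is a genuine gap: a cross-ratio bound compares the cross-ratio of $(V_n,J)$ to that of $(f^{q_n}(V_n),f^{q_n}(J))$, and says nothing about absolute lengths without further input on how $J$ sits inside $V_n$ and how $|V_n|$ compares to $|f^{q_n}(V_n)|$. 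In Yoccoz's argument the contradiction is extracted differently, typically by taking $J$ maximal among wandering intervals so that it occupies a definite proportion of the surrounding dynamical interval, and then playing the cross-ratio inequality against the fact that the images must shrink. Your sketch has the right ingredients but the final two lines do not yet constitute a proof.
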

It follows from Theorem~\ref{rotation_conjugacy_theorem} that there exists a unique ergodic $f$-invariant measure, which is the pullback of the Lebesgue measure on the circle by a conjugacy to the rigid rotation. We denote this measure by $\mu_f$. 

We define the \textit{marked signature} of a multicritical circle map $f$ with respect to its critical point $c$ to be the $(2N+2)$-tuple
$$
(\rho(f); N; d_0, d_1,\ldots,d_{N-1}; \delta_0,\delta_1, \ldots, \delta_{N-1}),
$$
where $N$ is the number of critical points, $d_j$ is the criticality of the critical point $c_j$, and $\delta_j = \mu_f[c_j,c_{j+1})$, with the convention that the critical points $c_0,\ldots,c_{N-1}$ are listed in the counterclockwise order starting from the point $c_0=c$ and $c_N=c_0$.




From now on, we will always assume that one of the critical points of each multicritical circle map is positioned at zero. 
\begin{definition}\label{signature_def}
The marked signature of a map $f$ with respect to the critical point at zero will be called simply the \textit{signature} of $f$ and will be denoted by $\sigma(f)$.
\end{definition}

Finally, we define the \textit{total criticality} $D(f)$ of a multicritical circle map $f$ as the product of the criticalities of all its critical points:
$$
D(f):= \prod_{j=0}^{N-1} d_j.
$$


\subsection{Multicritical commuting pairs}
First, we recall the notion of a \textit{multicritical commuting pair}. It is a direct generalization of a \textit{critical commuting pair} which was introduced in~\cite{ORSS} in order to define renormalization of circle maps with a single critical point.

For two points $a,b\in\bbR$, by $[a,b]$ we will denote the closed interval with endpoints $a,b$ without specifying their order.

\begin{definition}\label{commut_pair_def}
A \textit{multicritical commuting pair} of class $C^r$, $r\geq 3$ ($C^\infty$, analytic) is an ordered pair of maps $\zeta=(\eta,\xi)$, with the corresponding smoothness, such that the following properties hold:
\begin{enumerate}[(i)]
 \item the maps $\eta$ and $\xi$ are defined on the intervals $I_\eta=[a,0]$ and $I_\xi=[b,0]$ respectively, where $a,b\in\bbR$ are such that $ab<0$. Furthermore,
 $$
 \eta\colon I_\eta\to[a,b], \qquad \xi\colon I_\xi\to[a,b],
 $$
 and
 $$
 \xi(0)=a,\qquad \eta(0)=b.
 $$
 \item\label{com_pairs_ext_property} there exist $C^r$-smooth ($C^\infty$, analytic) extensions $\hat{\eta}\colon \hat I_\eta\to\bbR$ and $\hat{\xi}\colon \hat I_\xi\to\bbR$ of $\eta$ and $\xi$ respectively to some intervals $\hat I_\eta\Supset I_\eta$ and $\hat I_\xi\Supset I_\xi$, such that $\hat{\eta}$ and $\hat{\xi}$ are orientation-preserving homeomorphisms of $\hat I_\eta$ and $\hat I_\xi$ respectively, onto their images, and
 $$
 \hat{\eta}\circ \hat{\xi}=\hat{\xi}\circ\hat{\eta}
 $$
 where both compositions are defined;
 \item\label{com_pairs_crit_property} the maps $\hat{\eta}$ and $\hat\xi$ can have no more than finitely many critical points on the intervals $I_\eta$ and $I_\xi$ respectively, 
   and both maps $\hat\eta$ and $\hat\xi$ have a critical point at $0$.
   For every critical point $x_*$ of $\hat\eta$,
   we have a local decomposition $\hat\eta=\phi\circ Q_m\circ \psi$,
   where $Q_m(x)=x^m$ with an odd critical exponent
   $m=2k+1, k\in\NN$, and $\phi$, $\psi$ are local
   $C^r$-smooth ($C^\infty$, analytic) diffeomorphisms, with $\psi(x_*)=0$; and
   similarly for the critical points of 
   $\hat\xi$; 
 \item $\xi\circ\eta(0)\in I_\eta$.
\end{enumerate}
\end{definition}


\subsection{$C^r$-metric on commuting pairs}


\begin{definition}
For any $0\le r<\infty$, the $C^r$ (pseudo) metric on the space of $C^r$ multicritical commuting pairs is defined as follows: for any two $C^r$-smooth multicritical commuting pairs $\zeta_1=(\eta_1,\xi_1)$ and $\zeta_2=(\eta_2,\xi_2)$, let $A_1$ and $A_2$ be the M\"obius transformations such that for $i=1,2$,
$$
A_i(\eta_i(0))=-1,\qquad A_i(0)=0,\qquad A_i(\xi_i(0))=1.
$$
Then $\dist_{C^r}(\zeta_1,\zeta_2)$ is defined as
$$
\dist_{C^r}(\zeta_1,\zeta_2)=\max \left\{ \left| \frac{\xi_1(0)}{\eta_1(0)}-\frac{\xi_2(0)}{\eta_2(0)}\right|, \|A_1\circ\zeta_1\circ A_1^{-1} - A_2\circ\zeta_2\circ A_2^{-1}\|_r \right\},
$$
where $\|\cdot\|_r$ is the $C^r$-norm for maps in $[-1,1]$ with one discontinuity at the origin.

\end{definition}

The $C^r$ metrics are actually  pseudo-metrics as they are invariant under linear rescalings. They will become metrics on the subspaces of \textit{normalized} multicritical commuting pairs of appropriate smoothness, that is, the ones for which $\xi(0)=1$ (or, equivalently, $I_\eta=[0,1]$).

\subsection{Gluing procedure}
\label{sec:glue}
A multicritical circle map can be associated to a multicritical commuting pair $\zeta=(\eta|I_\eta,\xi|I_\xi)$ as follows. Let $x\in I_\xi$ be any regular point of $\xi$, and glue the endpoints of the interval $I=[x,\xi(x)]$. More precisely, consider the one-dimensional manifold $S$ of the same smoothness as $\zeta$, obtained by using $\xi$ as a local chart in a small neighborhood of $x$ (without critical points of $\xi$).

Consider the map $f\colon I\to I$ defined as follows:
$$
f(x) = \begin{cases}
\eta(x) &\text{if } x\in I_\eta \text{ and }\eta(x)\in I \\
\xi(\eta(x)) &\text{if } x\in I_\eta \text{ and }\eta(x)\not\in I \\
\eta(\xi(x)) &\text{if } x\in I_\xi.
\end{cases}
$$
Let $\tl f\colon S\to S$ be the projection of $f$ to a map of the topological circle $S$, and let $h$ be a diffeomorphism of the same smoothness as $\zeta$, which maps $S\to \bbT$ and sends $0\in S$ to $0\in\bbT$. It is straightforward to check that 
the map $\tl f$ projects to a multicritical circle map 
\begin{equation}
  \label{eq:fzeta}
f_\zeta=h\circ \tl f\circ h^{-1}\colon\bbT\to\bbT
\end{equation}
via  $h$.
Of course, $f_\zeta$ is not defined uniquely, since there is {\it a priori} no canonical way of identifying the manifold $S$ with the circle $\bbT$. Rather, we recover a conjugacy class of multicritical cicle maps from a pair $\zeta$ from this gluing construction, with the same smoothness as the pair $\zeta$ itself.
Nevertheless, it is easy to check that if $\rho(\zeta)\in\bbR\setminus\bbQ$, then the signature of $f_\zeta$ is independent of the particular choice of the map $f_\zeta$, hence, is determined only by the pair $\zeta$. This motivates the following definition:

\begin{definition}
We will say that two multicritical commuting pairs $\zeta_1$ and $\zeta_2$ have the same signature, if their corresponding multicritical circle maps $f_{\zeta_1}$ and $f_{\zeta_2}$, constructed in~(\ref{eq:fzeta}), have the same signatures in the sense of Definition~\ref{signature_def}. We will denote this by writing $\sigma(\zeta_1)=\sigma(\zeta_2)$.
\end{definition}

\begin{remark}
Note that $\sigma(\zeta_1)=\sigma(\zeta_2)$ does not necessarily imply that there is a topological conjugacy between $\zeta_1$ and $\zeta_2$ taking critical points to critical points. This is due to the fact that the gluing procedure does not ``remember'' which of the two identified subintervals of a multicritical commuting pair had a critical point.
\end{remark}

\subsection{Renormalization of multicritical commuting pairs}
\begin{definition}
For a multicritical commuting pair $\zeta = (\eta,\xi)$, its \textit{height} $\chi(\zeta)$ is defined as the positive integer, such that
$$
\eta^{\chi(\zeta)+1}(\xi(0))<0\le \eta^{\chi(\zeta)}(\xi(0)),
$$
provided that such a number exists. If it does not exist, that is, if $\eta$ has a fixed point, then we define $\chi(\zeta):=\infty$.
\end{definition}

\begin{definition}
Let $\zeta=(\eta|_{I_\eta},\xi|_{I_\xi})$ be a multicritical commuting pair with $\chi(\zeta)=a<\infty$ and $\eta^a(\xi(0))\neq 0$. Then the \textit{prerenormalization} $p\cR\zeta$ of $\zeta$ is defined as
$$
p\cR\zeta:= (\eta^a\circ\xi|_{I_\xi},\, \eta|_{[0,\eta^a(\xi(0))]}),
$$
and the \textit{renormalization} $\cR\zeta$ of $\zeta$ is the rescaling of $p\cR\zeta$ by the affine map $A(x)=x/\eta(0)$ which sends the interval $I_\xi$ to $[0,1]$:
$$
\cR\zeta:= A\circ p\cR\zeta\circ A^{-1}.
$$
\end{definition}
For a multicritical commuting pair $\zeta$ we define its \textit{rotation number} $\rho(\zeta)\in [0,1]$ to be equal to the continued fraction $[r_0,r_1,\ldots]$, where $r_j = \chi(\cR^j\zeta)$. In this definition $1/\infty$ is understood as $0$, hence a rotation number is rational if and only if only finitely many renormalizations of $\zeta$ are defined.

Let $G\colon (0,1)\to [0,1)$ be the Gauss map defined by $G\colon x\mapsto \{\frac{1}{x}\}$ or equivalently,
$$
G\colon [r_0, r_1,r_2,\ldots]\mapsto [r_1,r_2,\ldots].
$$
It is straightforward to see that if a  multicritical commuting pair $\zeta$ is $n$-times renormalizable, then
$$
\qquad \rho(\cR^n \zeta) = G^n(\rho(\zeta)).
$$
One can also verify without difficulty that
the rotation number
  $$\rho(\zeta)=\rho(f_\zeta)$$
  where $f_\zeta$ is as in (\ref{eq:fzeta}).
 However, the way we defined $\rho(\zeta)$ has an added advantage of producing a canonical continued fraction expansion when the rotation number is rational.

\subsection{Renormalization of multicritical circle maps}
For each $n\in\bbN$ we define $I_n:=[0, f^{q_n}(0)]\subset\bbT$ to be the arc that contains the point $f^{q_{n+2}}(0)$. Note that the arcs $I_n$ and $I_{n+1}$ lie on the opposite sides of $0$, and
$$
f^{q_n}(I_{n+1})\subset I_n\qquad\text{and}\qquad f^{q_{n+1}}(I_n)\subset I_n\cup I_{n+1}.
$$
Let $\hat I_n, \hat I_{n+1}\subset\bbR$ be the closed intervals, adjacent to zero and projected down to $I_n$ and $I_{n+1}$ respectively by the natural projection $\bbR\to\bbT$. Then the corresponding lifts of the maps $f^{q_{n+1}}$ and $f^{q_n}$ to the maps $\eta\colon \hat I_n\to \hat I_n\cup \hat I_{n+1}$ and $\xi\colon \hat I_{n+1}\to \hat I_n$ form a multicritical commuting pair. Henceforth, we shall abuse notation and simply denote this commuting pair by
$$
\zeta_n = (f^{q_{n+1}}|I_n,\, f^{q_n}|I_{n+1}),
$$
and call it  the $n$-th prerenormalization $p\cR^n f$.
The $n$-th renormalization $\cR^n f$ of $f$ is the rescaling of $p\cR^n f$ by the rescaling $A_n(x)=x/f^{q_n}(0)$ which sends the interval $I_n$ to  $[0,1]$:
$$
\cR^n f:= \big(A_n\circ f^{q_{n+1}}\circ A_n^{-1}|_{[0,1]},\, A_n\circ f^{q_n}\circ A_n^{-1}|_{A_n(I_{n+1})}\big).
$$
In particular, it is easy to see that for a multicritical circle map with an irrational rotation number, the following holds:
$$
\cR^n(\cR^m f) = \cR^{n+m} f,
$$
where on the left we have the $n$-th renormalization of the commuting pair $\cR^m f$, and on the right we have the $(n+m)$-th renormalization of the multicritical circle map $f$.

\subsection{Dynamical partititons and real {\it a priori} bounds}
Let us recall the definition of a dynamical partition of a multicritical circle map or a commuting pair. We will start with the case of a circle map, as it is easier to formulate.

\begin{definition}
  Let $f$ be a multicritical circle map which is at least $n$ times renormalizable. Then the collection of the intervals
  $$\cI_n(f)\equiv\left\{  I_n,f(I_n),\ldots,f^{q_{n+1}-1}(I_n)\right\}\cup
  \left\{  I_{n+1},f(I_{n+1}),\ldots,f^{q_{n}-1}(I_{n+1})\right\}$$
  is called the {\it $n$-th dynamical partition of} $f$.

\end{definition}
It is not hard to verify that the intervals $\cI_n(f)$ indeed form a partition of the circle, with overlaps only at the endpoints. Obviously, for each $t>0$, the partition $\mathcal I_{n+t}(f)$ is a refinement of  $\mathcal I_n(f)$.

The definition of a dynamical partition for a multicritical commuting pair is analogous:
\begin{definition}
 Let $\zeta$ be an  $n$-times renormalizable multicritical commuting pair $\zeta$. 
 Let $\zeta_n=(\eta_n,\xi_n)= p\cR^n\zeta$ be its $n$-th prerenormalization and consider the collection of intervals $\mathcal I_n(\zeta)$ that consists of finite forward orbits of each of the intervals $[\eta_n(0),0]$ and $[0,\xi_n(0)]$ under the dynamics of $\zeta$ until the last iterate before the first return to the interval $[\eta_n(0),\xi_n(0)]$. It is
called the {\it $n$-th dynamical partition of $\zeta$}. 
\end{definition}

\noindent
Again, the intervals  $\mathcal I_n(\zeta)$ form a partition of the interval $[\eta(0),\xi(0)]$, with overlaps only at the endpoints, and $\cI_{n+t}(\zeta)$ is a refinement of $\cI_{n}(\zeta)$ for $t>0$.

Real {\it a priori} bounds is a statement on the geometry of the dynamical partitions; we will formulate them for the case of pairs (which obviously also covers the case of maps). The following version of real bounds was proven by Estevez and de~Faria  in~\cite{Estevez_deFaria_18} in a somewhat greater generality:

\begin{theorem}[{\bf Real {\it a priori} bounds}]\label{real_bounds_thm1}
  For any $B>0$, $D\geq 3$ there exists $C>1$ such that the following holds. 
  For any $C^3$-smooth multicritical commuting pair $\zeta$ with an irrational rotation number of the type bounded by $B$ with total criticality bounded by $D$, there exists $n_0\in \NN$ such that for any $n\geq n_0$ we have:
  \begin{enumerate}
	\item if $I, J\in\mathcal I_n(\zeta)$ are two adjacent intervals, then 
	$$
	C^{-1}|J|\le |I|\le C|J|;
	$$
      \item if $I\in \mathcal I_n(\zeta)$, $J\in \mathcal I_{n+1}(\zeta)$ and $J\subset I$, then $|J|\ge C^{-1}|I|$;
      \item if $I_1\in \cI_n(\zeta)$ is an interval in the forward orbit $[\eta_n(0),0]$ by the dynamics of $\zeta$, and $I_2\in \cI_n(\zeta)$ is some subsequent interval in the same orbit, obtained by applying a corresponding composition $g$ of iterates of $\eta$ and $\xi$, then
        $$g|I_1=\psi_{s+1}\circ p_s\circ \psi_{s}\circ p_{s-1}\circ\cdots\circ p_0\circ\psi_0,$$
        where $p_j=x^{d_j}$ with $d_j\in\{2\NN+1\}$, $\prod d_j\leq D$, and each $\psi_j$ is a diffeomorphism whose distortion is not greater than $C$.
\end{enumerate}
\end{theorem}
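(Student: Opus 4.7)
The plan is to prove the three conclusions simultaneously by induction on the renormalization level $n$, with the combinatorial decomposition in (3) serving as the main technical engine that drives the metric estimates (1) and (2). The base case is produced by choosing $n_0$ large enough that the intervals of $\mathcal I_{n_0}(\zeta)$ lie in small neighborhoods of the finitely many critical points of $\zeta$, which is possible because Theorem~\ref{rotation_conjugacy_theorem} (applied to $f_\zeta$) guarantees that the partition intervals shrink to zero for an irrational rotation number. Beyond this scale, each map in the pair, restricted to a partition element, is well-approximated by its local power-map normal form from Definition~\ref{commut_pair_def}(iii), which is what makes uniform control possible.

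First I would establish the combinatorial skeleton of (3). Since $\rho(\zeta)$ has type bounded by $B$, any transition from $I_1 \in \cI_n(\zeta)$ to a subsequent $I_2 \in \cI_n(\zeta)$ along the forward orbit of $[\eta_n(0),0]$ is a composition of at most $B+1$ iterates of $\eta$ and $\xi$, and thus a single continuous branch $g$. Each passage of this composition through a critical point $c_j$ contributes, by Definition~\ref{commut_pair_def}(iii), a factor locally of the form $\phi \circ x^{d_j} \circ \psi$ with $\phi,\psi$ local diffeomorphisms. Absorbing the smooth pieces between consecutive critical points into composite diffeomorphisms $\psi_j$ and stringing together the power factors produces the alternating decomposition $g = \psi_{s+1}\circ p_s\circ\psi_s\circ\cdots\circ p_0\circ\psi_0$, with $\prod d_j \leq D(\zeta) \leq D$ by definition of total criticality and with $s$ bounded in terms of $B$ and $D$.

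The analytic content of (3), namely that each $\psi_j$ has distortion bounded by a constant $C=C(B,D)$, is the heart of the argument. Here I would invoke a multicritical cross-ratio inequality of \'Swi\c{a}tek--Herman type, as developed for $C^3$ circle homeomorphisms with non-flat critical points: iterates of the pair quasi-preserve a suitable cross-ratio up to a multiplicative error controlled by the total criticality. Applied along the pullback chain defining $g$, this yields bounded multiplicative distortion for each diffeomorphic factor $\psi_j$, with only $O(1)$ applications per renormalization step by the bounded-type assumption. Once (3) is in hand, conclusions (1) and (2) follow by a standard bootstrap: commensurability of adjacent intervals and of a child interval with its parent at level $n$ propagates to level $n+1$ through the bounded-distortion $\psi_j$'s, since the effect of each power factor $p_j=x^{d_j}$ on ratios of already commensurable intervals is itself bounded in terms of $D$. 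The main obstacle I expect is precisely the distortion control through multiple critical points: in the unicritical setting a single application of the cross-ratio inequality handles an entire return map, whereas here one must iterate it once per critical point traversed and prevent the distortion constants from compounding, which is exactly where the total-criticality cap $D(\zeta)\leq D$ (rather than merely bounded individual criticalities) and the bounded type together become indispensable.
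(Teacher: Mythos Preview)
The paper does not prove this theorem: it is quoted from Estevez and de~Faria~\cite{Estevez_deFaria_18}, so there is no proof in the paper to compare against. That said, your sketch contains a real error worth flagging.

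Your claim that ``any transition from $I_1\in\cI_n(\zeta)$ to a subsequent $I_2\in\cI_n(\zeta)$ along the forward orbit of $[\eta_n(0),0]$ is a composition of at most $B+1$ iterates of $\eta$ and $\xi$'' is false. The intervals of $\cI_n(\zeta)$ are the first $q_{n+1}$ forward images of $[\eta_n(0),0]$ (together with the $q_n$ images of $[0,\xi_n(0)]$) under the \emph{original} pair $\zeta$, so the composition $g$ taking $I_1$ to $I_2$ can involve on the order of $q_{n+1}$ iterates, which grows exponentially in $n$. The bounded type $B$ controls only the ratio $q_{n+1}/q_n$, not $q_{n+1}$ itself. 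What \emph{is} bounded is the number $s$ of power factors in the decomposition of $g$: the orbit of an interval of $\cI_n$ visits each critical point of $\zeta$ at most once before returning, so $s$ is at most the number of critical points and $\prod d_j\le D$.

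Consequently your distortion argument for the diffeomorphic factors $\psi_j$ is misdirected. Each $\psi_j$ is itself a long composition (of order $q_n$) of diffeomorphic restrictions of $\eta$ and $\xi$, and its distortion is \emph{not} bounded because there are ``only $O(1)$ applications''. The actual mechanism in the \'Swi\c{a}tek--Herman/Yoccoz argument is a Denjoy--Koksma type estimate: the cross-ratio distortion of such a long composition is controlled by the sum of the lengths of the intermediate intervals, and since those intervals are pairwise interior-disjoint elements of $\cI_n$, that sum is bounded by the total length of the ambient interval regardless of how many terms there are. This is precisely why real bounds hold even without the bounded-type hypothesis, and it is the key idea your sketch misses.
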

\begin{definition}\label{bound_geom_def}
We will sometimes abbreviate the conditions (1)-(3) by saying that {\it the geometry of the dynamical partition $\cI_n(\zeta)$ is bounded by $C$}.
  \end{definition}

\begin{remark}\label{bounded_goemetry_remark}
We note that condition~(2) implies that if $\rho(\zeta)\not\in\bbQ$ and the geometry of all dynamical partitions of $\zeta$ are bounded by the same constant $C$, then the rotation number $\rho(\zeta)$ is of type bounded by $B=B(C)$.
\end{remark}


\section{Holomorphic commuting pairs and complex {\it a priori} bounds}



A holomorphic commuting pair is an appropriate analytic extension of an (analytic) multicritical commuting pair to the complex plane, defined as follows:
\begin{definition}\label{holo_pair_def}
Given an analytic multicritical commuting pair $\zeta=(\eta|_{I_\eta},\xi|_{I_\xi})$, we say that it extends to a \textit{holomorphic commuting pair} $\cH$ if there exist 
three simply-connected $\RR$-symmetric domains $O, U, V\subset\bbC$ whose intersections with the real line are denoted by $I_U=U\cap\bbR$, $I_V=V\cap\bbR$, $I_O=O\cap\bbR$, and a simply connected $\bbR$-symmetric Jordan domain $\Delta$, such that
\begin{enumerate}[(i)]
\item $\overline O,\; \overline U,\; \overline V\subset \Delta$;
 $\overline U\cap \overline V=\{ 0\}\subset O$; the sets
  $U\setminus O$,  $V\setminus O$, $O\setminus U$, and $O\setminus V$ 
  are nonempty, connected, and simply-connected; 
  $I_\eta\subset I_U\cup\{0\}$, $I_\xi\subset I_V\cup\{0\}$;
  and $\overline U\cap\bbR=\overline{I_U}$ and $\overline V\cap\bbR=\overline{I_V}$;

\item the maps $\eta$ and $\xi$ have analytic extensions to $U$ and $V$ respectively, so that $\eta$ is a branched covering map of $U$ onto $(\Delta\setminus\bbR)\cup\eta(I_U)$, and $\xi$ is a branched covering map of $V$ onto $(\Delta\setminus\bbR)\cup\xi(I_V)$; 

\item the maps $\eta\colon U\to\Delta$ and $\xi\colon V\to\Delta$ can be further extended to analytic maps $\hat{\eta}\colon U\cup O\to\Delta$ and $\hat{\xi}\colon V\cup O\to\Delta$, so that the map $\nu=\hat{\eta}\circ\hat{\xi}=\hat{\xi}\circ\hat{\eta}$ is defined in $O$ and is a branched covering of $O$ onto $(\Delta\setminus \bbR)\cup\nu(I_O)$; 

\item\label{analyt_extensions_property} there exist three simply connected domains $\hat U,\hat V,\hat O\subset\bbC$ such that $U\Subset\hat U\Subset\Delta$, $V\Subset \hat V\Subset\Delta$, $O\Subset \hat O\Subset\Delta$, and the maps $\eta$, $\xi$ and $\nu$ extend analytically to the domains $\hat U$, $\hat V$ and $\hat O$ respectively so that all critical points of these extensions have real critical values;

\item\label{k_H_property} 

if $a,b\in\bbR$ are the nonzero endpoints of the intervals $I_V$ and respectively, $I_U$, then $\eta(b)=\xi(0)$ and $\xi^k(a)=\eta(0)$, for some integer $k=k(\cH)\ge 1$, {and $a$ and $b$ are critical points} of $\xi$ and $\eta$ respectively.


\end{enumerate}
\end{definition}

\noindent
We shall call $\zeta$ the {\it (multicritical) commuting pair underlying $\cH$}, and write $\zeta\equiv \zeta_\cH$. When no confusion is possible, we will use the same letters $\eta$ and $\xi$ to denote both the maps of the commuting pair $\zeta_\cH$ and their analytic extensions to the corresponding domains $U$ and $V$. The interval $\overline{I_U\cup I_V}$ will be called the \textit{large dynamical interval} of $\cH$.

\begin{remark}
Firstly, according to our definition, the maps $\eta$, $\xi$ and $\nu$ that constitute a holomorphic commuting pair, are allowed to have critical points in the interior of their domains $U$, $V$ and $O$ respectively. Also, the domains $U$, $V$ and $O$ are not necessarily Jordan domains as they may have slits in the complex plane that are preimages of the slits along the real line. This differs from the classical definition of holomorphic pairs that are extensions of unicritical commuting pairs (cf.~\cite{DeFar}).
\end{remark}

The sets $\Omega_\cH=O\cup U\cup V$ and $\Delta\equiv\Delta_\cH$ will be called \textit{the domain} and \textit{the range} of a holomorphic pair $\cH$. We will sometimes write $\Omega$ instead of $\Omega_\cH$, when this does not cause any confusion. 

We can associate to a holomorphic pair $\cH$  a single-valued piecewise defined map $S_\cH\colon\Omega\to\Delta$:
\begin{equation}
  \label{eq:shadow1}
S_\cH(z)=\begin{cases}
\eta(z),&\text{ if } z\in U,\\
\xi(z),&\text{ if } z\in  V,\\
\nu(z),&\text{ if } z\in\Omega\setminus(U\cup V).
\end{cases}
\end{equation}
De~Faria \cite{DeFar} calls $S_\cH$ the {\it shadow} of the holomorphic pair $\cH$. Property~\ref{analyt_extensions_property} from the definition of a holomorphic pair (Definition~\ref{holo_pair_def}) implies that the shadow $S_\cH$ has a piecewise analytic extension $\overline S_\cH\colon\overline\Omega\to\overline\Delta$ to the boundary of $\Omega$, defined as follows:
\begin{equation}
  \label{eq:shadow2}
\overline S_\cH(z)=\begin{cases}
S_\cH(z),&\text{ if } z\in \Omega,\\
\eta(z),&\text{ if } z\in \overline U\setminus\Omega,\\
\xi(z),&\text{ if } z\in  \overline V\setminus\Omega,\\
\nu(z),&\text{ if } z\in\partial O\setminus (\overline U\cup\overline V).
\end{cases}
\end{equation}

We say that two holomorphic pairs $\cH_1$ and $\cH_2$ are conjugate (smoothly, analytically, $K$-quasiconformally)
if there exists a real symmetric homeomorphism $h\colon\Delta_{\cH_1}\to \Delta_{\cH_2}$  with the appropriate regularity that conjugates the shadows $S_{\cH_1}$ and $S_{\cH_2}$.

To discuss convergence in the space of holomorphic pairs, we use Carath\'eodory topology on the space of analytic maps of marked  domains~\cite{McM-ren1}.
We associate to each $\cH$ three triples of such maps
$$
(U, \, \xi(0)/2, \, \eta), \qquad (V, \, \eta(0)/2, \, \xi), \qquad (O, \, 0, \, \nu)
$$
and denote the space of holomorphic pairs with the corresponding product topology by $\mathbf H$.
By Carath\'eodory convergence (or simply convergence) of holomorphic pairs we will mean convergence in this space.

Let the \textit{modulus} $\mod(\cH)$ of a holomorphic pair $\cH=(\eta,\xi,\nu)$ be the supremum of the moduli of all annuli $A\subset\Delta$ that separate $\bbC\setminus\Delta$ from $\overline\Omega$. It will also be convenient to use the following notation: $J_\eta:= I_U = U\cap\bbR$ and $J_\xi:= I_V=V\cap\bbR$, where $U$ and $V$ are the domains of the respective maps $\eta$ and $\xi$.  

\begin{definition}\label{Hm_def}
For $m\in(0,1)$ let $\mathbf H(m)\subset\mathbf H$ denote the space of all holomorphic commuting pairs $\cH=(\eta|U,\xi|V,\nu|O)$ with the following properties:
\begin{enumerate}
	\item\label{mod_property} $\mod(\cH)\ge m$;
	\item $\diam(\Delta_\cH)\le 1/m$;
	\item\label{min_interval_property} $\min\{|I_\eta|,\, |I_\xi|\}\ge m$, where $I_\eta = [0,\xi(0)]$ and $I_\xi = [\eta(0),0]$;
	\item\label{Poincare_property} $\Omega_\cH$ is contained in the Poincar\'e neighborhood $D_m(\Omega_\cH\cap\bbR)$ that consists of all points $z\in\bbC$, from which the interval $\Omega_\cH\cap\bbR$ is seen under the angle greater than $m$; 
	\item\label{k_property} $k(\cH)\le 1/m$, where $k(\cH)$ is the same as in property~\ref{k_H_property} of Definition~\ref{holo_pair_def};
	\item\label{extension_property} for every $z\in\Omega_\cH$, there exists a neighborhood $U_z\ni z$, such that $U_z$ contains a round disk of radius $m^2$, centered at $z$, and the shadow $S_\cH$ extends from $z$ to an analytic branched covering $\phi$ of $U_z$ onto its image with topological degree of $\phi$ not exceeding $1/m$; 
	\item\label{real_bound_property} for each $\theta \in \{\eta,\xi\}$ there exists $s\in\bbN$, such that the map $\theta|J_\theta$ can be represented as
	$$
	\theta|J_\theta = \psi_{s+1}\circ p_s\circ\psi_s\circ p_{s-1}\circ\dots\circ\psi_1\circ p_0\circ\psi_0,
	$$
	where $p_j(x)=x^{d_j}$ for an odd integer $d_j\ge 3$, and each $\psi_j$ is a diffeomorphism of the interval 
	$$
	L_j=p_{j-1}\circ\psi_{j-1}\circ p_{j-2}\circ\dots \circ\psi_1\circ p_0\circ\psi_0(J_\theta)
	$$
	that extends to a conformal map of its neighborhood 
	$$
	N_m(L_j) = \{z\in\bbC\mid \inf_{w\in L_j}|z-w|<m|L_j|\}.
	$$
\end{enumerate}
\end{definition}

Let the \textit{degree} of a holomorphic pair $\cH$ denote the maximal topological degree of the covering maps constituting the pair. Denote by $\mathbf H^D(m)$ the subset of $\mathbf H(m)$ consisting of pairs whose degree is not greater than $D$. 

The following lemmas summarizes some basic properties of holomorphic pairs from $\mathbf H^D(m)$.

\begin{lemma}\label{basic_holo_pair_lemma}
For each $D\ge 3$ and $m\in (0,1)$, there exists a real number $r = r(m, D)>0$, such that for any $\cH=(\eta|U,\xi|V,\nu|O)\in \mathbf H^D(m)$ 
and any round disk $D'$ of radius $r$, centered at a point on one of the three intervals $J_\eta$, $J_\xi$ or $O\cap\bbR$, the corresponding images $\eta(D')$, $\xi(D')$ and $\nu(D')$ are defined and contained in $\Delta_{\cH}$.
In particular, the domain $\Omega_{\cH}$ contains the disk of radius $r$, centered at zero.
\end{lemma}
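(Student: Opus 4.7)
The plan is to combine two uniform estimates, one on the geometry of $\Delta_\cH$ near $\overline{\Omega_\cH}$, and one on the local distortion of the shadow maps near real points.

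First, properties (1) and (2) of Definition~\ref{Hm_def} together imply, via a standard extremal-length (Gr\"otzsch) estimate applied to the annulus of modulus at least $m$ that separates $\bbC\setminus\Delta_\cH$ from $\overline{\Omega_\cH}$ inside a set of diameter $\le 1/m$, the existence of a constant $\delta_1=\delta_1(m)>0$ with $\dist(\overline{\Omega_\cH},\partial\Delta_\cH)\ge\delta_1$. For any base point $x$ on $J_\eta\cup J_\xi\cup(O\cap\bbR)$ and any $\theta\in\{\eta,\xi,\nu\}$, the image $\theta(x)$ lies in the large dynamical interval $\overline{I_U\cup I_V}\subset\overline{\Omega_\cH}\cap\bbR$, so the Euclidean disk of radius $\delta_1$ around $\theta(x)$ is contained in $\Delta_\cH$. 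It therefore suffices to find $r=r(m,D)>0$ so that $\theta(D_r(x))\subset D_{\delta_1}(\theta(x))$; the analytic extension of $\theta$ to $D_r(x)$ is already provided by property (6) as long as $r\le m^2$.

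Second, I bound the image diameter uniformly using property (7). For $\theta\in\{\eta,\xi\}$ this decomposes $\theta|J_\theta=\psi_{s+1}\circ p_s\circ\cdots\circ p_0\circ\psi_0$ with each $\psi_j$ conformal on the Euclidean neighborhood $N_m(L_j)$, and $p_j(z)=z^{d_j}$ with odd $d_j\ge 3$ satisfying $\prod d_j\le D$; in particular $s+1$ is bounded in terms of $D$. Property (3) gives $|L_0|=|J_\theta|\ge m$, so the $m^2$-disk around $x$ lies in $N_m(L_0)$. I propagate $D_r(x)$ inductively: Koebe distortion on each univalent $\psi_j$ gives uniformly bounded multiplicative control of the image diameter, while each power map is estimated directly by $|p_j(z)-p_j(w)|\le d_j\max(|z|,|w|)^{d_j-1}|z-w|$ with the intermediate scales controlled by the preceding Koebe bound. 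The accumulated factor telescopes into a single constant $C=C(m,D)$, and picking $r<\min(m^2,\delta_1(m)/(2C))$ yields $\theta(D_r(x))\subset D_{\delta_1}(\theta(x))\subset\Delta_\cH$. For $\theta=\nu$ on $x\in O\cap\bbR$ the same argument is applied twice using $\nu=\eta\circ\xi$.

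The ``in particular'' assertion is the special case $\theta=\nu$, $x=0$ of the main statement: ``$\nu(D_r(0))$ is defined'' forces $D_r(0)$ to lie in the domain $O\subset\Omega_\cH$ of $\nu$. The chief technical obstacle is in the second paragraph, in passing through the power maps $p_j$ when $D_r(x)$ approaches the critical point $0$, where Koebe distortion is unavailable; I expect to handle this via the direct polynomial estimate quoted above, using that the uniform bound $\prod d_j\le D$ simultaneously caps the number of layers and controls each power-map distortion factor, so that all of these losses can be absorbed into a single $C(m,D)$.
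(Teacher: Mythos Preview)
Your argument follows the same two-step outline as the paper's proof: first a uniform separation of the real image points from $\partial\Delta_\cH$, then property~(7) together with Koebe distortion (plus a direct power-map estimate, which you spell out more carefully than the paper does) to control the size of $\theta(D_r(x))$.

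There is one small but genuine slip in the first step. Properties~(1) and~(2) alone do \emph{not} yield a uniform lower bound $\dist(\overline{\Omega_\cH},\partial\Delta_\cH)\ge\delta_1(m)$: if $\overline{\Omega_\cH}$ is allowed to be arbitrarily small, it can sit arbitrarily close to $\partial\Delta_\cH$ while the separating annulus still has modulus $\ge m$ (take $\Delta$ the unit disk and $\Omega$ a disk of radius $\epsilon$ centered at $1-2\epsilon$; the modulus tends to infinity as $\epsilon\to 0$ while the distance tends to $0$). What you need is a lower bound on $\diam(\overline{\Omega_\cH})$, and this comes from property~(3), since $I_\eta\cup I_\xi\subset\Omega_\cH$ has length $\ge 2m$. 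With $\diam(\overline{\Omega_\cH})\ge 2m$ in hand, a Teichm\"uller-type modulus estimate gives the desired $\delta_1(m)$; this is exactly why the paper cites properties~(1) and~(3) rather than~(1) and~(2). Once you swap in property~(3) at this point, the rest of your argument goes through unchanged.
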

\begin{proof}
First, observe that according to properties~(\ref{mod_property}) and~(\ref{min_interval_property}) of Definition~\ref{Hm_def}, there exists a real number $r_1=r_1(m)>0$, such that the range $\Delta_{\cH}$ contains the $r_1$-neighborhood of the interval $J_\eta\cup J_\xi$.
Next, note that the number $s$ from property~(\ref{real_bound_property}) of Definition~\ref{Hm_def} is bounded from above by a function of the degree $D$, hence, property~(\ref{real_bound_property}) of Definition~\ref{Hm_def} and the Koebe Distortion Theorem, applied to the maps $\eta$, $\xi$ and $\nu=\eta\circ\xi$, imply existence of the number $r=r(m, D)$.
\end{proof}

The following is an easy generalization of Lemma~2.17 of~\cite{Yamp-towers}:

\begin{lemma}\label{H_compactness_lemma}
For each $D\ge 3$ and $m\in (0,1)$, the space $\mathbf H^D(m)$ is sequentially compact.
\end{lemma}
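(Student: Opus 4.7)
The plan is to take a sequence $\cH_n=(\eta_n|U_n,\xi_n|V_n,\nu_n|O_n)\in\mathbf H^D(m)$ and extract a Cara\-th\'eodory-convergent subsequence whose limit is shown, property-by-property, to lie in $\mathbf H^D(m)$.

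First, I would handle the domains and ranges. Since $\diam(\Delta_n)\le 1/m$ and, by Lemma~\ref{basic_holo_pair_lemma}, each $\Omega_{\cH_n}$ contains a disk around $0$ of a definite radius $r=r(m,D)$, the pointed ranges $(\Delta_n,0)$ form a precompact family in Carath\'eodory topology; we pass to a subsequence with $\Delta_n\to\Delta_\infty$. The estimate $\mod(\cH_n)\ge m$ together with the lower bound $\min\{|I_{\eta_n}|,|I_{\xi_n}|\}\ge m$ implies that $\Delta_\infty$ is a non-degenerate $\bbR$-symmetric Jordan domain and that any Carath\'eodory limit $U_\infty,V_\infty,O_\infty$ of the pointed domains $(U_n,\xi_n(0)/2)$, $(V_n,\eta_n(0)/2)$, $(O_n,0)$ is a non-trivial simply-connected domain compactly contained in $\Delta_\infty$. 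Property~(\ref{Poincare_property}) is closed in Carath\'eodory topology, so $\Omega_\infty\subset D_m(\Omega_\infty\cap\bbR)$.

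Next I would extract limits of the maps. Since the images of $\eta_n,\xi_n,\nu_n$ all lie in the bounded set $\Delta_n$ of diameter at most $1/m$, Montel's theorem gives a normal family on any compact subset of the respective Carath\'eodory kernels; after a diagonal extraction we obtain holomorphic limits $\eta_\infty:U_\infty\to\overline{\Delta_\infty}$, $\xi_\infty:V_\infty\to\overline{\Delta_\infty}$, $\nu_\infty:O_\infty\to\overline{\Delta_\infty}$. The commutation relation, the identities $\xi_n(0)=a_n$ type boundary data, and the integer $k(\cH_n)\le 1/m$ are all discrete/closed data; after passing to a further subsequence we may assume $k(\cH_n)\equiv k_\infty$ and the boundary-matching conditions pass to the limit, giving axioms~(i)--(iii), (v) of Definition~\ref{holo_pair_def}. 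To preserve the branched-covering degrees I invoke Hurwitz: since the degrees are bounded above by $D$, after refining the subsequence we may assume they stabilize, and then Hurwitz guarantees that $\eta_\infty,\xi_\infty,\nu_\infty$ are branched coverings of the claimed degrees, giving also property~(\ref{k_property}) and the degree bound defining $\mathbf H^D(m)$.

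It remains to verify the quantitative properties~(\ref{mod_property})--(\ref{real_bound_property}) of Definition~\ref{Hm_def} for $\cH_\infty$. Properties~(\ref{mod_property})--(\ref{min_interval_property}) are upper/lower semicontinuous under Carath\'eodory convergence of domains of uniformly bounded geometry and so persist in the limit. For property~(\ref{extension_property}), given $z\in\Omega_\infty$, pick $z_n\in\Omega_{\cH_n}$ with $z_n\to z$; the extensions $\phi_n$ on disks of radius $m^2$ around $z_n$ have topological degrees uniformly bounded by $1/m$ and images of bounded diameter, so another normal-family/Hurwitz argument yields a limiting extension $\phi_\infty$ on a disk of radius $m^2$ around $z$. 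Property~(\ref{real_bound_property}) is where I expect the main work: the number $s$ of factors is a priori bounded by $\log_3 D$ since $\prod d_j\le D$ with $d_j\ge 3$, so after a further subsequence both $s$ and the sequence $(d_0,\ldots,d_s)$ stabilize; the intermediate diffeomorphisms $\psi_j^{(n)}$ are conformal on the definite neighborhoods $N_m(L_j^{(n)})$ of intervals $L_j^{(n)}$ whose lengths stay bounded away from $0$ and $\infty$ (by the already-established geometry bounds on the limit), so Koebe distortion plus normality yield conformal limits $\psi_j^{(\infty)}$ on $N_m(L_j^{(\infty)})$ whose composition is exactly the boundary factorization of $\eta_\infty$ and $\xi_\infty$. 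This last verification is the main obstacle, because it requires the factorization to be stable under Carath\'eodory limits; the essential point is the uniform bound on $s$ coming from the degree bound $D$, which controls the combinatorics of the factorization and keeps it rigid under passage to the limit.
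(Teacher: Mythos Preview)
Your proof is correct and follows essentially the same route as the paper: extract Carath\'eodory-convergent subsequences of the pointed domains using Lemma~\ref{basic_holo_pair_lemma} (to get a definite inner radius) together with the diameter bound, then use normal-family arguments to pass the maps to the limit, and finally check that the defining properties of $\mathbf H^D(m)$ survive. The paper's version is considerably terser---it simply declares that ``all properties of Definition~\ref{Hm_def} still hold for $\cH$''---whereas you spell out the verification property by property, including the key observation that the number $s$ of factors in property~(\ref{real_bound_property}) is bounded in terms of $D$, which is exactly what makes that step go through.
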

\begin{proof}
Let $\cH_1,\cH_2,\dots$ be a sequence of holomorphic pairs from $\mathbf H^D(m)$, where $\cH_j=(\eta_j|U_j,\xi_j|V_j, \nu_j|O_j)$.
According to Lemma~\ref{basic_holo_pair_lemma}, for each $j\ge 1$, the domain $U_j$ contains a disk of radius $r$, centered at $\xi_j(0)/2$. Then Theorem~5.2 from~\cite{McM-ren1} implies that the sequence of pointed domains $(U_j, \xi_j(0)/2)$ is precompact, hence has a subsequence that Carath\'eodory converges to a pointed domain $(U,u)$. Furthermore, Lemma~\ref{basic_holo_pair_lemma} implies that $\overline{U\cap\bbR}$ is the Hausdorff limit of the intervals $\overline{J_{\eta_j}}$ from the subsequence. Possibly, extracting another subsequence, we ensure that the sequence of maps $\eta_j$ converges to a map $\eta$ on $U$.

Treating the maps $\xi_j$ and $\nu_j$ in a similar way, we ensure that a subsequence of the sequence $\cH_1,\cH_2,\dots$ converges to a holomorphic pair $\cH$. It is obvious that (possibly after extracting a further subsequence) all properties of Definition~\ref{Hm_def} still hold for $\cH$, hence the space $\mathbf H^D(m)$ is sequentially compact.
\end{proof}

Below is another useful property of holomorphic pairs from $\mathbf H^D(m)$.
\begin{lemma}\label{strip_lemma}
For each $D\ge 3$ and $m\in (0,1)$, there exists a real number $\varepsilon = \varepsilon(m, D)>0$, such that for any $\cH\in \mathbf H^D(m)$ and any $z\in\Omega_\cH$, satisfying $|\Im(z)|<\varepsilon$, we have $S_\cH(z)\in\Omega_\cH$.
\end{lemma}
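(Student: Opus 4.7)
The plan is to argue by contradiction, relying on the sequential compactness of $\mathbf{H}^D(m)$ given by Lemma~\ref{H_compactness_lemma} together with continuity of the shadow. Suppose no such $\varepsilon$ exists; then there are sequences $\cH_n=(\eta_n|U_n,\xi_n|V_n,\nu_n|O_n)\in\mathbf{H}^D(m)$ and $z_n\in\Omega_{\cH_n}$ with $|\Im z_n|<1/n$ while $S_{\cH_n}(z_n)\notin\Omega_{\cH_n}$ for every $n$. After extracting a subsequence, Lemma~\ref{H_compactness_lemma} produces a Carath\'eodory limit $\cH_n\to\cH\in\mathbf{H}^D(m)$. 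The bound $\diam\Delta_{\cH_n}\le 1/m$ from property~(2) of Definition~\ref{Hm_def} keeps $z_n$ bounded, so a further subsequence gives $z_n\to z_\infty\in\bbR$; Carath\'eodory convergence of the domains forces $z_\infty\in\overline{\Omega_\cH}$.

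Next I would show $S_{\cH_n}(z_n)\to\overline{S}_\cH(z_\infty)$. Property~\ref{real_bound_property} of Definition~\ref{Hm_def} factors each of $\eta_n|_{J_{\eta_n}}$ and $\xi_n|_{J_{\xi_n}}$ as $\psi_{s+1}\circ p_s\circ\cdots\circ p_0\circ\psi_0$, with $\prod d_j\le D$ (so $s$ is bounded in terms of $D$) and each $\psi_j$ extending conformally to an $m$-neighborhood of its domain interval; combined with the Koebe distortion theorem and direct estimates on the power maps $p_j$, this yields a uniform (in $n$) modulus of continuity for $\eta_n$, $\xi_n$ and $\nu_n=\eta_n\circ\xi_n$ on their domains near $\bbR$. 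The reality of the critical values (property~\ref{analyt_extensions_property} of Definition~\ref{holo_pair_def}) keeps the image from straying from $\bbR$ near critical points of $\eta_n,\xi_n,\nu_n$. Continuity of the boundary extension $\overline{S}_\cH$ from~(\ref{eq:shadow2}) then establishes the convergence.

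It remains to verify that $\overline{S}_\cH(z_\infty)\in\Omega_\cH$. Since $z_\infty\in\bbR$, real symmetry gives $\overline{S}_\cH(z_\infty)\in\bbR$, and the commuting-pair structure of $\cH$ restricted to the real line sends $\overline{\Omega_\cH}\cap\bbR$ into the large dynamical interval $\overline{I_U\cup I_V}$. This interval lies inside the open set $\Omega_\cH=O\cup U\cup V$: property~\ref{k_H_property} identifies the endpoints $a,b$ of the dynamical interval as critical points of $\xi$ and $\eta$, hence as interior points of $V\subset\Omega_\cH$ and $U\subset\Omega_\cH$, while the interior of $[a,b]$ is covered by $I_U\cup I_V\cup\{0\}\subset U\cup V\cup O$. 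Therefore $\overline{S}_\cH(z_\infty)\in\Omega_\cH$, and because $\Omega_\cH$ is open and $\Omega_{\cH_n}\to\Omega_\cH$ in the Carath\'eodory sense, $S_{\cH_n}(z_n)\in\Omega_{\cH_n}$ for all $n$ sufficiently large, contradicting the choice of the sequence.

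The main obstacle I expect is the boundary analysis in the last step: even with continuity of $\overline{S}_\cH$ up to $\overline{\Omega_\cH}$, one must exclude the possibility that a real $z_\infty\in\partial\Omega_\cH$ (for example an extremal endpoint of $I_U$ or $I_V$, or a point created by real traces of slits on $\partial O$) is mapped to a point of $\partial\Omega_\cH$. This requires careful use of property~\ref{k_H_property} (pinning down the endpoints of $[a,b]$ inside $U\cup V$) and the analytic continuations of $\eta,\xi,\nu$ beyond $\overline{\Omega_\cH}$ afforded by property~\ref{analyt_extensions_property}, guaranteeing that the image always lands in the open union $O\cup U\cup V$.
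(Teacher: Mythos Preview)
Your proposal is correct and follows essentially the same route as the paper: a contradiction argument using the sequential compactness of $\mathbf{H}^D(m)$ (Lemma~\ref{H_compactness_lemma}), uniform control on the imaginary part of $S_{\cH_n}$ near $\bbR$ coming from property~\ref{real_bound_property} of Definition~\ref{Hm_def} and Koebe, and then the observation that the real dynamics of the limiting pair cannot send a boundary point of $\Omega_\cH\cap\bbR$ outside $\Omega_\cH$.

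The only cosmetic difference is in the endgame. You track a specific limit point $z_\infty\in\overline{\Omega_\cH}\cap\bbR$ and argue directly that $\overline{S}_\cH(z_\infty)$ lies in the open set $\Omega_\cH$, invoking property~\ref{k_H_property} to place the endpoints $a,b$ in the interior of $V,U$. The paper instead phrases the conclusion as a dichotomy: either the domains $\Omega_{\cH_j}$ pinch along $\bbR$ (excluded by Lemma~\ref{basic_holo_pair_lemma}) or an endpoint of $\Omega_\cH\cap\bbR$ is fixed by $S_\cH$ (excluded because $\cH$ would then fail to be a holomorphic pair). These are two ways of packaging the same real-dynamical fact; your formulation is arguably more explicit about which boundary cases need checking, and you correctly flag that analysis as the crux of the argument.
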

\begin{proof}
Assume that the statement is false. Then there exists a sequence of holomorphic pairs $\cH_1,\cH_2,\dots\in\mathbf H^D(m)$, for which the corresponding numbers $\varepsilon_1,\varepsilon_2,\dots$ decrease to zero. Due to Lemma~\ref{H_compactness_lemma}, this sequence has a subsequence that converges to some $\cH\in\mathbf H^D(m)$.
	
According to properties~(\ref{Poincare_property}) and~(\ref{real_bound_property}) of Definition~\ref{Hm_def}, and the Koebe Distortion Theorem (same argument as in the proof of Lemma~\ref{basic_holo_pair_lemma}), there exists a positive real number $\rho_0>0$ and a continuous function $\rho\colon(0,\rho_0)\to\bbR^+$ with $\rho(s)\to 0$ as $s\to 0^+$, such that if $s=|\Im(z)|<\rho_0$, then $|\Im(S_{\cH_j}(z))|<\rho(s)$, for each index $j$ and $z\in\Omega_{\cH_j}$. Now, taking into account the dynamics of real pairs, this implies that the domains $\Omega_{\cH_j}$ of the maps from the converging subsequence either pinch on the real line, or at least one of the endpoints of the interval $\Omega_\cH\cap\bbR$ is a fixed point of the limiting map $S_\cH$. The first possibility is ruled out by Lemma~\ref{basic_holo_pair_lemma}. The second possibility cannot happen as well, since otherwise $\cH$ is not a holomorphic pair.
\end{proof}

Finally, we can refine the statement of real {\it a priori} bounds for pairs underlying elements of
 $\mathbf H^D(m)$:

\begin{theorem}[{\bf Real {\it a priori} bounds for $\mathbf H^D(m)$}]\label{real_bounds_thm}
  For any pair of integers $D\ge 3$, $B\ge 1$ and a real number $m\in (0,1)$, there exists a real number $C=C(m,D, B)>1$, such that for any holomorphic pair $\cH\in\mathbf H^D(m)$ with an irrational rotation number of type bounded by $B$, the following holds. For the pair $\zeta$ underlying $\cH$, the geometry of the dynamical partition $\mathcal I_n(\zeta)$ is bounded by $C$  for each level  $n\ge 0$.
  \end{theorem}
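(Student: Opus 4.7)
The plan is to deduce the uniform bound from the classical (non-uniform) real bounds of Theorem~\ref{real_bounds_thm1} by a compactness argument in $\mathbf H^D(m)$. By Lemma~\ref{H_compactness_lemma}, $\mathbf H^D(m)$ is sequentially compact, and the subset $K\subset \mathbf H^D(m)$ of pairs whose rotation number is of type bounded by $B$ is closed --- being the preimage of the Diophantine set defined by~(\ref{bounded_type_def}) under the continuous rotation-number map --- and hence compact. Assuming for contradiction that no uniform $C$ exists, we obtain a sequence $\cH_j\in K$ and levels $n_j\ge 0$ along which the geometries of $\mathcal I_{n_j}(\zeta_{\cH_j})$ diverge. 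After extraction we may assume $\cH_j\to\cH_*\in K$ in the Carath\'eodory topology, so that in particular $\rho(\zeta_{\cH_*})$ is irrational of type bounded by $B$.

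If $\{n_j\}$ stays bounded, a further extraction gives $n_j\equiv n$. Carath\'eodory convergence of the pairs forces uniform convergence on compact subsets of the real line of all finitely many iterates of $\eta_j,\xi_j$ that build $\mathcal I_n$, hence Hausdorff convergence of each of its intervals to the corresponding interval of $\mathcal I_n(\zeta_{\cH_*})$. All limiting intervals have positive length because $\rho(\zeta_{\cH_*})$ is irrational, so the geometries of $\mathcal I_n(\zeta_{\cH_j})$ converge to the finite geometry of $\mathcal I_n(\zeta_{\cH_*})$, contradicting divergence. As a separate remark, the case $n=0$ can also be handled directly from Definition~\ref{Hm_def}: properties~(\ref{mod_property}), (\ref{min_interval_property}) and~(\ref{real_bound_property}), combined with Koebe distortion applied to the diffeomorphisms $\psi_j$ in the factorization of $\eta$ and $\xi$, control the geometry in terms of $m$, $D$ and $B$ alone (the latter entering through the bounded number of intervals in $\mathcal I_0$).

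The delicate case is $n_j\to\infty$. Theorem~\ref{real_bounds_thm1} attaches to each $\zeta_{\cH_j}$ a finite level $n_0(\zeta_{\cH_j})$ past which the geometry is bounded by a universal $C_0=C_0(B,D)$, so the assumption forces $n_j<n_0(\zeta_{\cH_j})$ and hence $n_0(\zeta_{\cH_j})\to\infty$. I would transfer the problem to level~$0$ by considering the prerenormalized pairs $\zeta_j:=p\cR^{n_j}(\zeta_{\cH_j})$: after the appropriate affine rescaling their level-$0$ geometries coincide with the level-$n_j$ geometries of $\zeta_{\cH_j}$ and therefore diverge. This is the \emph{main obstacle}: to rerun the compactness argument at level~$0$ on the $\zeta_j$ one must keep them inside a common compact class $\mathbf H^{D'}(m')$ with $m'=m'(m,D,B)>0$, which is precisely a statement of complex a priori bounds for renormalization of multicritical holomorphic pairs (the analytic core of the paper, generalizing the bi-cubic bounds from~\cite{Est_Smania_Yampolsky_2020}). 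Once this invariance is in hand, Lemma~\ref{basic_holo_pair_lemma} applied to a Carath\'eodory limit of the $\zeta_j$ yields a uniform positive lower bound on the lengths of its level-$0$ intervals, contradicting the divergence and producing the uniform $C=C(m,D,B)$.
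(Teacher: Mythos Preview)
Your overall strategy—compactness of $\mathbf H^D(m)$ plus Theorem~\ref{real_bounds_thm1}—is exactly what the paper has in mind; its proof is literally the one-line remark ``in $\mathbf H^D(m)$ we can select $n_0=0$, by compactness.'' Your treatment of the closedness of $K$ and of the bounded-level case $n_j\equiv n$ is correct and is all that is needed once one knows that the case $n_j\to\infty$ cannot occur.

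Your handling of the case $n_j\to\infty$, however, has two genuine problems. First, the identification ``the level-$0$ geometry of $\cR^{n_j}\zeta_{\cH_j}$ coincides with the level-$n_j$ geometry of $\zeta_{\cH_j}$'' is not right: the level-$0$ partition of the renormalized pair records only the two intervals $I_{n_j}$, $I_{n_j+1}$ adjacent to $0$, whereas the level-$n_j$ geometry of $\zeta_{\cH_j}$ involves the full orbit of those intervals under $\zeta_{\cH_j}$. Transferring a blow-up of the latter to the former would already require the distortion bound in condition~(3) of Definition~\ref{bound_geom_def}, which is part of what is to be proved. Second, even granting that step, your appeal to complex bounds to trap the rescaled renormalizations in some $\mathbf H^{D'}(m')$ suffers from the same defect you are trying to cure: Theorem~\ref{ComplexBounds_theorem} carries its own pair-dependent threshold $N(f)$, and you give no reason why $n_j\ge N(\cH_j)$ when both sequences go to infinity.

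The intended argument avoids complex bounds entirely and is much shorter: in any proof of Theorem~\ref{real_bounds_thm1} (e.g.\ \cite{Estevez_deFaria_18}) the threshold $n_0(\zeta)$ depends on $\zeta$ only through continuous data—roughly, the level at which the dynamical intervals become small relative to a fixed-size $C^3$ extension of $\eta,\xi$ past the endpoints. Property~(\ref{real_bound_property}) of Definition~\ref{Hm_def} provides such an extension uniformly in $m$, and compactness (Lemma~\ref{H_compactness_lemma}) bounds the relevant $C^3$ data on $K$. Hence $n_0$ is bounded on $K$, so case $n_j\to\infty$ cannot arise, and the finitely many initial levels are handled by your continuity argument.
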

%
Note, that in the general  Theorem~\ref{real_bounds_thm1} the number $n_0$ depends on the pair $\zeta$.
In contrast, in $\mathbf H^D(m)$ we can select $n_0=0$, by compactness.

For every pair of positive integers $B>0$ and $D\ge 3$, let $\mathcal M_{D,B}$ denote the set of all analytic multicritical circle maps with a critical point at zero, an irrational rotation number of combinatorial type bounded by $B$ and the total criticality not greater than $D$. 

The following Theorem was proven in~\cite{Est_Smania_Yampolsky_2020}:
\begin{theorem}[\textbf{Complex bounds for bounded type}]\label{ComplexBounds_theorem}
For every pair of positive integers $B>0$ and $D\ge 3$, there exists a real constant $m=m(B, D)>0$, such that for any multicritical circle map 
$f\in\mathcal M_{D,B}$, there exists $N=N(f)\in\bbN$ with the property that for any $n\ge N$, the renormalization $\cR^n f$ extends to a holomorphic commuting pair $\cH_n\in \mathbf H^D(m)$. Furthermore, $\cH_n$ can be chosen so that its range $\Delta_{n}$ is a Euclidean disk.
\end{theorem}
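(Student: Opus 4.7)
The plan is to construct the holomorphic extension of $\cR^n f$ by pulling a carefully chosen Euclidean disk $\Delta_n$ back through the iterate decompositions supplied by real \emph{a priori} bounds, with the geometry of each step controlled by the Koebe Distortion Theorem. This is a natural multicritical adaptation of the classical de~Faria--de~Melo strategy used for unicritical circle maps of bounded type.

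First I would fix $n$ large enough that Theorem~\ref{real_bounds_thm1} applies and $\mathcal I_n(\zeta_n)$ has bounded geometry with constant $C=C(B,D)$. For $\zeta_n=(\eta_n,\xi_n)$ and $\nu_n=\eta_n\circ\xi_n$, condition~(3) of that theorem factorizes each of these real maps as
$$
\psi_{s+1}\circ p_s\circ\psi_s\circ p_{s-1}\circ\cdots\circ p_0\circ\psi_0,
$$
with $p_j(x)=x^{d_j}$, $d_j$ odd, $\prod d_j\le D$, and each $\psi_j$ a diffeomorphism of distortion at most $C$. In particular the number of factors $s$ is bounded by a function of $D$, and each $\psi_j$ is a univalent restriction of an iterate of $f$ between intervals of comparable scale, so a standard Koebe argument using analyticity and real bounds endows $\psi_j$ with a conformal extension to a definite $\bbR$-symmetric Poincar\'e neighborhood of its domain.

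Second I would take $\Delta_n$ to be a round disk on the real line whose diameter is a small but definite fraction of $|I_n\cup I_{n+1}|$, and define $U,V,O$ as the components of $\eta_n^{-1}(\Delta_n)$, $\xi_n^{-1}(\Delta_n)$, $\nu_n^{-1}(\Delta_n)$ containing the intervals $I_\eta,I_\xi,I_O$ respectively. Each such pullback is computed stepwise through the decomposition: pulling back through a $\psi_j$ yields a quasidisk of Koebe-controlled distortion, while pulling back through $p_j$ produces either a quasidisk or a bouquet of $d_j$ petals joined at the critical point, depending on whether the current set encloses zero. The resulting $U,V,O$ are therefore simply connected and $\bbR$-symmetric, and the topological degrees of the branched coverings are bounded by $D$. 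A lower bound on $\mod(\cH_n)$ follows because all three domains sit in a disk of diameter commensurable to the diameter of $\Delta_n$, while $\Delta_n$ itself sits inside a larger disk of diameter commensurable to $|I_n\cup I_{n+1}|$. The nesting $\overline O\subset U\cap V$ and the separation $\overline U\cap\overline V=\{0\}$ come from the commutation $\nu_n=\eta_n\circ\xi_n=\xi_n\circ\eta_n$ together with the real bounds comparing $|I_O|$ with $|I_\eta|,|I_\xi|$. The size, Poincar\'e-angle and factorization conditions of Definition~\ref{Hm_def} are immediate from the construction and Theorem~\ref{real_bounds_thm1}(3). A genuine Euclidean-disk range is obtained by replacing $\Delta_n$ with a concentric inscribed round disk, at the cost of only a definite loss of modulus.

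The main obstacle is the coordination of a single disk $\Delta_n$ so that all three pullbacks interlock with the correct nesting and uniform annular moduli. Unlike in the unicritical case, the images of several critical points can fall inside the current pullback domain in combinatorially intricate ways, so at each stage one must check that $\Delta_n$ is narrow enough in the real direction for the branched-cover structure not to degenerate. The bounded-type hypothesis is essential precisely because it caps the combinatorial complexity of the returns uniformly in $n$, allowing the factor-by-factor Koebe estimates to compound into a modulus bound independent of the renormalization level.
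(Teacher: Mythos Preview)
The paper does not supply a proof of this theorem; it is quoted as a result of Estevez--Smania--Yampolsky~\cite{Est_Smania_Yampolsky_2020}, so there is no ``paper's own proof'' to compare against. Your outline does follow the general de~Faria--de~Melo strategy that underlies that reference.

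That said, several steps of your sketch are stated in a way that would not work. First, the claim $\overline O\subset U\cap V$ contradicts Definition~\ref{holo_pair_def}(i), which explicitly requires $O\setminus U$ and $O\setminus V$ to be nonempty; the correct picture is that $O$ straddles the origin and protrudes from both $U$ and $V$. Second, taking $\Delta_n$ of diameter a \emph{small} fraction of $|I_n\cup I_{n+1}|$ is the wrong direction: for $U=\eta_n^{-1}(\Delta_n)$ to contain $I_\eta$ one needs $\Delta_n\supset\eta_n(I_\eta)$, and more to the point the modulus bound requires $\diam\Omega_\cH$ to be a definite fraction \emph{smaller} than $\diam\Delta_n$, not merely commensurable with it---your argument for $\mod(\cH_n)\ge m$ as written yields nothing. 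The substantive analytic difficulty is precisely to show that the long pullbacks do not swell; the standard mechanism is an invariance statement for Poincar\'e neighborhoods $D_\theta(J)$ under inverse branches, which is considerably more delicate than ``Koebe-controlled distortion'' and is where the multicritical structure and bounded-type hypothesis actually enter. Finally, property~\ref{k_H_property} of Definition~\ref{holo_pair_def} demands that the real boundary points of $U$ and $V$ be \emph{critical} points of $\eta$ and $\xi$, which is not automatic from pulling back a generic disk and must be arranged by choosing $\Delta_n$ to have its real endpoints at specific critical values.
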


One of the consequences of Theorem~\ref{ComplexBounds_theorem} is the following statement on quasiconformal conjugacy:

\begin{theorem}\label{qc_conjugacy_theorem}
For every pair of positive integers $B>0$ and $D\ge 3$, let $m=m(B, D)>0$ be the same as in Theorem~\ref{ComplexBounds_theorem}. Then there exists a real number $K = K(B, D)\ge 1$ with the following property: for any two functions $f, g\in\mathcal M_{D,B}$ with the same signature, there exists $L= L(f,g)\in\bbN$, such that for any $n\ge L$, the renormalizations $\cR^nf$ and $\cR^ng$ extend to two $K$-quasiconformally conjugate holomorphic commuting pairs from $\mathbf H^D(m)$.
\end{theorem}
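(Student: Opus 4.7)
The plan is to combine the complex bounds (Theorem~\ref{ComplexBounds_theorem}) with the real bounds (Theorem~\ref{real_bounds_thm}) via a Sullivan-style equivariant pullback construction. By Theorem~\ref{ComplexBounds_theorem}, for each $f\in\mathcal M_{D,B}$ there exists $N(f)$ such that for all $n\ge N(f)$, the renormalization $\cR^n f$ extends to a holomorphic pair $\cH_n^f\in \mathbf{H}^D(m)$ with Euclidean disk range $\Delta_n^f$. Set $L = \max\{N(f), N(g)\}$ and fix $n\ge L$. By Theorem~\ref{real_bounds_thm}, the dynamical partitions of the underlying multicritical commuting pairs $\zeta_n^f := \zeta_{\cH_n^f}$ and $\zeta_n^g := \zeta_{\cH_n^g}$ have bounded geometry at every level, with constant $C=C(m, B, D)$.

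Since $f$ and $g$ share both rotation number and signature, Theorem~\ref{rotation_conjugacy_theorem} furnishes a topological conjugacy $\phi:\bbT\to\bbT$ that matches critical points of equal criticality in combinatorial order. Rescaled to the $n$-th renormalization scale, $\phi$ induces a homeomorphism between the large dynamical intervals of $\zeta_n^f$ and $\zeta_n^g$ sending $\cI_j(\zeta_n^f)$ to $\cI_j(\zeta_n^g)$ for every $j\ge 0$. Bounded geometry of both partition families, with identical combinatorial labeling, yields via a standard interval-comparison argument (cf.~\cite{FM1,GdM}) that the real conjugacy is quasisymmetric with constant $\eta_0=\eta_0(B,D)$. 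Applying the Ahlfors--Beurling extension (and using that the disk ranges have commensurable diameters by properties~(\ref{mod_property}) and~(\ref{min_interval_property}) of Definition~\ref{Hm_def}) yields a $K_0$-quasiconformal map $\Psi_0 : \Delta_n^f \to \Delta_n^g$ with $K_0=K_0(B,D)$, whose boundary values agree with the real conjugacy on $\bbR\cap\Delta_n^f$.

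I would then upgrade $\Psi_0$ to a quasiconformal conjugacy of the shadow dynamics by equivariant lifting. On $\Delta_n^f\setminus\overline{\Omega_n^f}$ set $\Psi_1 = \Psi_0$; inductively, on each component of $S_{\cH_n^f}^{-1}(W)\subset \Omega_n^f$, where $W$ is a region on which $\Psi_j$ has already been defined, take $\Psi_{j+1}$ to be the holomorphic lift determined by $S_{\cH_n^g}\circ\Psi_{j+1} = \Psi_j\circ S_{\cH_n^f}$ together with the matching of $\Psi_0$ on $\bbR$. Because the shadow maps are piecewise holomorphic branched covers, each lifting preserves the Beltrami coefficient and so every $\Psi_j$ is $K_0$-qc. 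The degree bound of property~(\ref{real_bound_property}) and the definite modulus of property~(\ref{mod_property}) of Definition~\ref{Hm_def} give uniform control and compactness of the lifts, so the sequence $\{\Psi_j\}$ converges locally uniformly in $\Delta_n^f$ to a $K_0$-qc conjugacy $\Psi$, yielding the desired $K = K(B,D)$.

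The main obstacle will be guaranteeing \emph{combinatorial coherence} of the lifts: choosing the correct branches at critical preimages so that critical points of $\cH_n^f$ are matched with those of $\cH_n^g$ of equal criticality, checking that successive redefinitions agree on overlaps among $U$, $V$, and $O$, and verifying continuity across the gluing loci $\partial U\cap\partial O$ and $\partial V\cap\partial O$. Branch selection is forced by the signature hypothesis together with the commutation relation from Definition~\ref{holo_pair_def}(iii); agreement on overlaps and continuity across the gluings follow from the fact that $\Psi_0$ is already a conjugacy on $\bbR$ and from the equivariance built into the lifting scheme.
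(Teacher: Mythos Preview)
Your proposal is correct and follows essentially the same approach as the paper, which only sketches the argument: obtain a quasisymmetric conjugacy on the real line from bounded geometry (the paper cites \cite{Estevez_deFaria_18} for this step) and promote it to a quasiconformal conjugacy of the holomorphic pairs via the standard pull-back argument. The one refinement worth noting is that the paper stresses $L$ must be chosen large enough that the branched covering structures of $\cH_n^f$ and $\cH_n^g$ actually match (critical points in corresponding pieces $U$, $V$, $O$ with equal local degrees), which in the multicritical case is not guaranteed by merely taking $L=\max\{N(f),N(g)\}$; your discussion of ``combinatorial coherence'' addresses the right issue but should allow $L$ to be enlarged for this purpose.
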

The proof follows the usual route of a pull-back argument to promote the quasi-symmetric conjugacy between renormalizations \cite{Estevez_deFaria_18} to a quasiconformal one. Taking sufficiently deep renormalizations ensures that the holomorphic pair extensions  of $\cR^nf$ and $\cR^ng$ have matching branched covering structures (this is, of course, automatic in the unicritical case \cite{DeFar,Ya1}). The pull-back argument itself is completely standard, and will be left to the reader. 

\begin{remark}\label{gluing_remark}
Given an analytic pair $\zeta$ with an irrational rotation number, let $f_\zeta$ be an arbitrary circle map, constructed by the gluing procedure (\ref{eq:fzeta}). Then there exists a real-symmetric equatorial neighborhood $W\subset\bbC\slash\bbZ$, such that if the $n$-th prerenormalization $p\cR^nf_\zeta$ extends to a holomorphic pair $\cH_f$ with $\Delta_{\cH_f}\subset W$, then the prerenormalization $p\cR^n\zeta$ also extends to a holomorphic pair $\cH_\zeta$, and the pairs $\cH_f$ and $\cH_\zeta$ are conformally conjugate. In particular, this implies that Theorem~\ref{ComplexBounds_theorem} and Theorem~\ref{qc_conjugacy_theorem} can be restated in an identical form for multicritical commuting pairs instead of the multicritical circle maps.
\end{remark}

\ignore{

\subsection{Some general preliminaries}

\hlc{------------ A nice lemma but it was not used anywhere in the text. It is not required once we know that our map can be decomposed as in property (7) of Definition 3.4.} 

\begin{lemma}\label{proper_pullback_lemma}
Let $g\colon U\to\bbC$ be a non-constant holomorphic map of an open domain $U\subset\bbC$, and let $V\subset\bbC$ be a simply connected open set. Let $U'\subset U$ be a connected component of $g^{-1}(V)$, and assume that the restriction $g\colon U'\to V$ is a proper map. Then the following two statements hold:

(i) if $g$ does not have critical points in $U'$, then $g$ is univalent on $U'$;

(ii) if $U$ is simply connected and $U'\Subset U$, then $U'$ is also simply connected. (Here $g$ is allowed to have critical points in $U'$.)
\end{lemma}
\begin{proof}
Part (i): since $g$ is proper and has no critical points in $U'$, it follows that $g\colon U'\to V$ is a covering map. Since $V$ is simply connected, $g$ must be univalent on $U'$.


Part (ii): Without loss of generality (possibly, after shrinking the domain $V$) we may assume that $V$ and $U'$ have smooth boundaries. Let $\gamma\subset\partial U'$ be the outer boundary of $U'$, and let $W\supset U'$ be the domain enclosed by the loop $\gamma$. Since $g\colon U'\to V$ is a branched covering, we have $g(\gamma)=\partial V$, and since $U$ and $V$ are simply connected, it follows that the map $g$ is defined on $W$ and $g(W)= V$. The latter implies that $W=U'$, hence, $U'$ is simply connected.
\end{proof}

\begin{remark}
Part (ii) of Lemma~\ref{proper_pullback_lemma} does not hold without the assumption that $U$ is simply connected. For example, $g(z)=z+1/z$ is a branched covering map of the annulus $A=\{z\in\bbC\mid 1/2<|z|<2\}$ onto the interior of the ellipse with major axis $(-5/2,5/2)$ and minor axis $(-3i/2,3i/2)$.
\end{remark}

\hlc{------------------------}

}

\section{Expansion of hyperbolic metric for the dynamics of holomorphic pairs}


We collect the basic results on the hyperbolic expansion by the iterates of holomorphic pairs. The corresponding statements for polynomial-like maps are in the now classical work of McMullen~\cite{McM-ren1}. 

For a hyperbolic Riemann surface $X$, let $\dist_X$ denote the distance with respect to the hyperbolic metric on $X$. For a holomorphic map $f\colon X\to Y$ between two hyperbolic Riemann surfaces $X$ and $Y$, let $\|f'(z)\|_{X,Y}$ denote the norm of the derivative of $f$ at $z\in X$, measured with respect to the hyperbolic metric in $X$ and $Y$. We will omit the subscripts and simply write $\|f'(z)\|$ when this does not cause any confusion. If $X$ and $Y$ are both subsets of the complex plane, then $|f'(z)|$ will denote the norm of the derivative, measured in the standard Euclidean metric on $\bbC$.

The following lemma is standard in the theory of hyperbolic Riemann surfaces (see Proposition~4.9 from~\cite{McM-ren2} 
for a proof). 
\begin{lemma}\label{hyper_inclusion_lemma}
There exists a universal positive function $C(s)<1$ decreasing to zero as $s$ decreases to zero, such that the following holds. If $X\subsetneq Y$ are two distinct hyperbolic Riemann surfaces and $f\colon X\to Y$ is the standard inclusion, then for any $z\in X$ we have
$$
\|f'(z)\|_{X,Y}\le C(\dist_Y(z, \,Y\setminus X)).
$$
\end{lemma}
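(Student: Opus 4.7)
The derivative norm of the inclusion $f\colon X\hookrightarrow Y$ at $z$ equals the ratio of hyperbolic densities $\rho_Y(z)/\rho_X(z)$, so the task is to bound this ratio by a universal function of $s:=\dist_Y(z,Y\setminus X)$ tending to zero with $s$. The plan is to replace $X$ by $Y\setminus\{w\}$ for a well-chosen $w\in Y\setminus X$ near $z$, and then to pass to the universal cover of $Y$ in order to reduce the resulting density ratio to an explicit computation on the unit disk.

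Given $\eps>0$, pick $w\in Y\setminus X$ with $\dist_Y(z,w)<s+\eps$. Since $X\subset Y\setminus\{w\}$, the Schwarz--Pick monotonicity of the hyperbolic metric under inclusion gives $\rho_X(z)\ge\rho_{Y\setminus\{w\}}(z)$, hence
\[
\|f'(z)\|_{X,Y}\ \le\ \frac{\rho_Y(z)}{\rho_{Y\setminus\{w\}}(z)}.
\]
Next, let $\pi\colon\bbD\to Y$ be the universal covering, lift $z$ to $\tilde z\in\bbD$, and choose a lift $\tilde w\in\pi^{-1}(w)$ with $\dist_{\bbD}(\tilde z,\tilde w)=\dist_Y(z,w)$; such a lift exists by lifting a length-minimizing geodesic from $z$ to $w$ in $Y$, along which $\pi$ is a local isometry. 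The restriction $\pi\colon\bbD\setminus\pi^{-1}(w)\to Y\setminus\{w\}$ is still an unramified covering, so $\rho_Y(z)\,|\pi'(\tilde z)|=\rho_{\bbD}(\tilde z)$ and $\rho_{Y\setminus\{w\}}(z)\,|\pi'(\tilde z)|=\rho_{\bbD\setminus\pi^{-1}(w)}(\tilde z)$. Combining these identities with the inclusion $\bbD\setminus\pi^{-1}(w)\subset\bbD\setminus\{\tilde w\}$ (another application of monotonicity) yields
\[
\|f'(z)\|_{X,Y}\ \le\ \frac{\rho_{\bbD}(\tilde z)}{\rho_{\bbD\setminus\{\tilde w\}}(\tilde z)}.
\]

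The right-hand side is invariant under Möbius automorphisms of $\bbD$, and therefore depends only on $t:=\dist_{\bbD}(\tilde z,\tilde w)=\dist_Y(z,w)$. Normalizing by an isometry of $\bbD$ so that $\tilde z=0$ and $\tilde w=r\in(0,1)$ with $t=\tfrac12\log\tfrac{1+r}{1-r}$, and then applying a further isometry sending $\tilde w$ to the puncture of $\bbD\setminus\{0\}$, one evaluates the ratio using the explicit hyperbolic density on the once-punctured disk, whose growth rate near the puncture is $1/(|\zeta|\log(1/|\zeta|))$. This produces a universal continuous function $c(t)<1$ with $c(t)\to 0$ as $t\to 0^+$ (quantitatively, $c(t)\asymp t\log(1/t)$). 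Letting $\eps\to 0$ and passing to a monotone envelope gives the desired $C(s)$ decreasing to zero as $s\to 0^+$. I expect the only delicate step to be the careful bookkeeping between the two monotonicity inequalities and the covering-space identification of densities; there is no serious obstacle, as this is the standard proof of a classical hyperbolic-geometry inequality.
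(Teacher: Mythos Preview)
Your argument is correct and is essentially the standard proof: the paper does not supply its own argument but simply cites Proposition~4.9 of McMullen~\cite{McM-ren2}, and what you have written is precisely the classical reduction (via monotonicity under inclusion and passage to the universal cover) to an explicit density comparison on $\bbD$ versus $\bbD\setminus\{\tilde w\}$ that one finds there.
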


The following two lemmas are direct adaptations of Lemma~4.10 parts (I) and (II) of \cite{Yamp-towers} (where the corresponding statements are given for the unicritical case). The proofs are identical and will be omitted.

\begin{lemma}\label{basic_expansion_lemma}
Let $\cH$ be a holomorphic pair with domain $\Omega$ and range $\Delta$. Then for any $z\in\Omega\setminus\bbR$, such that $S_\cH(z)\not\in\bbR$, we have $\|S_\cH'(z)\|>1$ in the hyperbolic metric of $\Delta\setminus\bbR$.

Furthermore, for any $\varepsilon>0$, there exists $r>1$, such that if $|\Im(z)|>\varepsilon$ and $S_\cH(z)\not\in\bbR$, then $\|S_\cH'(z)\|>r$ in the hyperbolic metric of $\Delta\setminus\bbR$.
\end{lemma}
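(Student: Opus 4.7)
The plan is to adapt the unicritical argument of Lemma~4.10 in~\cite{Yamp-towers} and reduce the assertion to the strict Schwarz lemma (Lemma~\ref{hyper_inclusion_lemma}) applied to a suitable inclusion of hyperbolic Riemann surfaces.

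First I would exploit Property~(\ref{analyt_extensions_property}) of Definition~\ref{holo_pair_def}: the extensions of $\eta$, $\xi$ and $\nu$ are branched coverings whose critical values all lie on $\bbR$. Hence every critical point of the shadow $S_\cH$ is sent into $\bbR$, and the restriction
$$
S_\cH\colon \Omega\setminus S_\cH^{-1}(\bbR)\longrightarrow \Delta\setminus\bbR
$$
is an unbranched holomorphic covering. Writing $\Omega^\pm=\Omega\cap\bbH^\pm$ and $\Delta^\pm=\Delta\cap\bbH^\pm$, real symmetry of $S_\cH$ together with the fact that $\eta,\xi,\nu$ are orientation-preserving homeomorphisms on $I_U,I_V,I_O$ forces $S_\cH(\Omega^\pm)\subset\overline{\Delta^\pm}$. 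Fix $z\in\Omega\setminus\bbR$ with $S_\cH(z)\notin\bbR$; assume $z\in\Omega^+$ (the other case being symmetric), so that $S_\cH(z)\in\Delta^+$. Let $C$ be the connected component of $\Omega^+\setminus S_\cH^{-1}(\bbR)$ containing $z$. Restricting the covering above to one component of its source, we obtain an unbranched holomorphic covering $S_\cH\colon C\to\Delta^+$.

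Any holomorphic covering is a local isometry between the corresponding hyperbolic metrics, so $|S_\cH'(z)|\,\rho_{\Delta^+}(S_\cH(z))=\rho_C(z)$. Since the hyperbolic metric of $\Delta\setminus\bbR$ restricted to $\Delta^+$ is the hyperbolic metric of $\Delta^+$, we get
$$
\|S_\cH'(z)\|_{\Delta\setminus\bbR}=\frac{\rho_{\Delta^+}(S_\cH(z))}{\rho_{\Delta^+}(z)}\,|S_\cH'(z)|=\frac{\rho_C(z)}{\rho_{\Delta^+}(z)}.
$$
Definition~\ref{holo_pair_def} guarantees $\overline{\Omega}\subset\Delta$, giving the strict inclusion $C\subset\Omega^+\subsetneq\Delta^+$. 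Lemma~\ref{hyper_inclusion_lemma} applied to the inclusion $C\hookrightarrow\Delta^+$ then yields $\rho_C(z)>\rho_{\Delta^+}(z)$, proving the first statement.

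For the uniform version, I would run a compactness argument. The set $K_\varepsilon=\{z\in\overline\Omega:|\Im(z)|\ge\varepsilon\}$ is compact since $\overline\Omega\Subset\Delta$ for the fixed pair $\cH$. On $K_\varepsilon$ the function $z\mapsto\|S_\cH'(z)\|_{\Delta\setminus\bbR}$ is continuous wherever defined, and tends to $+\infty$ whenever $S_\cH(z)\to\bbR$ (because $\rho_{\Delta\setminus\bbR}$ blows up on the real line while $\rho_{\Delta\setminus\bbR}(z)$ stays bounded for $|\Im(z)|\ge\varepsilon$). Combining these with the strict pointwise bound $\|S_\cH'\|_{\Delta\setminus\bbR}>1$ established above, continuity and compactness furnish the required uniform lower bound $r=r(\varepsilon,\cH)>1$. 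The main obstacle is precisely this last step: one has to carefully check the behavior of $\|S_\cH'(z)\|_{\Delta\setminus\bbR}$ as $z$ approaches $S_\cH^{-1}(\bbR)\cap K_\varepsilon$ and as $z$ approaches $\partial K_\varepsilon$, invoking $\overline\Omega\Subset\Delta$ to rule out any escape to $\partial\Delta^+$. Everything else in the argument is a direct translation of the standard unicritical proof.
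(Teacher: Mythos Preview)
Your argument is essentially the same as the paper's (omitted) proof: factor $S_\cH$ as the covering $\Omega\setminus S_\cH^{-1}(\bbR)\to\Delta\setminus\bbR$ (a hyperbolic local isometry) followed by the strict inclusion into $\Delta\setminus\bbR$, and apply Lemma~\ref{hyper_inclusion_lemma}; for the second statement your compactness argument on $K_\varepsilon$ is equivalent to the paper's direct use of the quantitative bound in Lemma~\ref{hyper_inclusion_lemma}.

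One correction worth noting: your intermediate claim $S_\cH(\Omega^\pm)\subset\overline{\Delta^\pm}$ is \emph{false} in the multicritical setting. Near an interior critical point $c\in I_\eta$ of odd order $d\ge 3$ the local model is $z\mapsto a(z-c)^d$ with $a>0$, and for $z-c=re^{i\theta}$ with $\theta\in(\pi/d,2\pi/d)$ the image lies in the lower half-plane. This is exactly the new phenomenon compared with the unicritical case. However, the claim is inessential to your proof: your displayed identity $\|S_\cH'(z)\|_{\Delta\setminus\bbR}=\rho_C(z)/\rho_{\Delta^+}(z)$ holds regardless of whether $S_\cH(C)=\Delta^+$ or $\Delta^-$, since in either case $S_\cH\colon C\to S_\cH(C)$ is a local hyperbolic isometry and only the source density $\rho_{\Delta^+}(z)$ (with $z\in\Delta^+$) enters the denominator. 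Simply drop the half-plane assertion and let $S_\cH(C)$ be whichever component of $\Delta\setminus\bbR$ it happens to be.
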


\ignore{
\begin{proof}
Define the set $S$ as the preimage $S:= \S_\cH^{-1}(\bbR)$. Then $S_\cH\colon (\Omega\setminus S)\to (\Delta\setminus\bbR)$ is a covering map, hence a local isometry between $(\Omega\setminus S)$ and $(\Delta\setminus\bbR)$. On the other hand, according to Lemma~\ref{hyper_inclusion_lemma}, the inclusion $i\colon (\Omega\setminus S)\to (\Delta\setminus\bbR)$ satisfies
$$
\|i'(z)\|_{(\Omega\setminus S), (\Delta\setminus\bbR)}\le \rho(z)<1,
$$
for every $z\in (\Omega\setminus S)$, where 
$$
\rho(z) = C(\dist_{(\Delta\setminus\bbR)}(z,\, (\Delta\setminus\bbR)\setminus (\Omega\setminus S)).
$$
Thus, according to the Chain Rule, we have
$$
\|S_\cH'(z)\|\ge \frac{1}{\rho(z)}>1.
$$

Finally, it follows from Lemma~\ref{hyper_inclusion_lemma} that if $|\Im(z)|>\varepsilon$, then $\rho(z)<\rho_1(\varepsilon)<1$, for some function $\rho_1$. Hence, one can take $r = 1/\rho_1(\varepsilon)>1$, which completes the proof.
\end{proof}

}

\begin{lemma}[{\bf Definite expansion}]\label{expansion_at_escaping_points_lemma}
For every integer $D\ge 3$ and every real $m>0$, there exists a real number $\lambda=\lambda(m, D)>1$, such that the following holds. 
Let $\cH$ and $\cH'$ be two holomorphic pairs, satisfying
\begin{itemize}
\item $\zeta_{\cH'}= p\cR^n\zeta_\cH$ for some $n\in\bbN$;
\item $\Delta_{\cH'}\subset\Delta_\cH$;
\item an appropriate affine rescaling of $\cH'$ is contained in $\mathbf H^D(m)$;
\end{itemize}
Then for any $z\in\Omega_{\cH'}$, such that $S_{\cH'}(z)\not\in\Omega_{\cH'}$, we have 
$$
\|S_{\cH'}'(z)\|\ge \lambda
$$
in the hyperbolic metric of $\Delta_{\cH}\setminus\bbR$ (not to be confused with the hyperbolic metric of $\Delta_{\cH'}\setminus\bbR$).
\end{lemma}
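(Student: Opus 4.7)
The plan is to normalize first and then reduce to the previous expansion lemma. Since hyperbolic derivatives are invariant under conformal (in particular, affine) changes of coordinates, I would apply the affine map $A$ that places $\cH'$ into $\mathbf{H}^D(m)$: write $\tilde\cH'=A\circ\cH'\circ A^{-1}\in\mathbf H^D(m)$, set $\tilde z=A(z)$ and $\tilde w=A(S_{\cH'}(z))=S_{\tilde\cH'}(\tilde z)$, and denote $\tilde\Delta=A(\Delta_{\cH})$, so that $\Delta_{\tilde\cH'}\subset\tilde\Delta$ and
$$\|S_{\cH'}'(z)\|_{\Delta_\cH\setminus\bbR}=\|S_{\tilde\cH'}'(\tilde z)\|_{\tilde\Delta\setminus\bbR}.$$

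The first geometric input is the escape condition. Since $\tilde w\notin\Omega_{\tilde\cH'}$, the contrapositive of Lemma~\ref{strip_lemma} applied to $\tilde\cH'\in\mathbf H^D(m)$ gives $|\Im\tilde z|\ge\varepsilon=\varepsilon(m,D)>0$. The second input is the basic expansion: Lemma~\ref{basic_expansion_lemma}(ii) applied to $\tilde\cH'$ yields a uniform $r=r(m,D)>1$ with
$$\|S_{\tilde\cH'}'(\tilde z)\|_{\Delta_{\tilde\cH'}\setminus\bbR}\ge r.$$
Denoting $\kappa(u)=\rho_{\tilde\Delta\setminus\bbR}(u)/\rho_{\Delta_{\tilde\cH'}\setminus\bbR}(u)\in(0,1]$ for $u\in\Delta_{\tilde\cH'}\setminus\bbR$, the two hyperbolic derivatives differ by the density ratio
$$\|S_{\tilde\cH'}'(\tilde z)\|_{\tilde\Delta\setminus\bbR}=\|S_{\tilde\cH'}'(\tilde z)\|_{\Delta_{\tilde\cH'}\setminus\bbR}\cdot\frac{\kappa(\tilde w)}{\kappa(\tilde z)}.$$
So the task reduces to producing a uniform lower bound on $\kappa(\tilde w)/\kappa(\tilde z)$ (or, more precisely, on the product $r\cdot\kappa(\tilde w)/\kappa(\tilde z)$) strictly greater than $1$.

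I would establish the desired bound by a compactness-and-contradiction argument. Assume the conclusion fails; choose sequences $\cH_n,\cH_n'$ and points $z_n$ with $\|S_{\cH_n'}'(z_n)\|_{\Delta_{\cH_n}\setminus\bbR}\to 1$. After rescaling, Lemma~\ref{H_compactness_lemma} extracts a subsequence with $\tilde\cH_n'\to\tilde\cH'_\infty\in\mathbf H^D(m)$, while $\tilde\Delta_n$ converges as a pointed domain (in the Carath\'eodory sense) to a limit $\tilde\Delta_\infty\supseteq\Delta_{\tilde\cH'_\infty}$ (possibly all of $\bbC$). The points $\tilde z_n\to\tilde z_\infty$ with $|\Im\tilde z_\infty|\ge\varepsilon$ and $\tilde w_\infty=S_{\tilde\cH'_\infty}(\tilde z_\infty)\notin\Omega_{\tilde\cH'_\infty}$. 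By continuity of hyperbolic densities under Carath\'eodory convergence, the limit of the derivatives is $\|S_{\tilde\cH'_\infty}'(\tilde z_\infty)\|_{\tilde\Delta_\infty\setminus\bbR}\le 1$, which must be shown to contradict the strict expansion coming from Lemma~\ref{basic_expansion_lemma} applied at the limit together with the fact that $\tilde w_\infty\in\Delta_{\tilde\cH'_\infty}\setminus\Omega_{\tilde\cH'_\infty}$ lies in the modulus $\ge m$ annulus and hence at a uniformly controlled position.

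The main obstacle is exactly the density-ratio transport. The danger is that $\tilde w$ can approach $\partial\Delta_{\tilde\cH'}$ within the annulus $\Delta_{\tilde\cH'}\setminus\Omega_{\tilde\cH'}$, which drives $\rho_{\Delta_{\tilde\cH'}\setminus\bbR}(\tilde w)\to\infty$ and therefore $\kappa(\tilde w)\to 0$; naively, this could wipe out the expansion. The resolution I expect is that as $\tilde w$ approaches $\partial\Delta_{\tilde\cH'}$, the inner-metric derivative $\|S_{\tilde\cH'}'(\tilde z)\|_{\Delta_{\tilde\cH'}\setminus\bbR}$ necessarily blows up at the matching rate, so the two divergences cancel in the product; the compactness step above then converts this cancellation into a uniform quantitative bound $\lambda>1$ depending only on $m$ and $D$.
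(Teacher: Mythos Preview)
Your approach never uses the hypothesis $\zeta_{\cH'}=p\cR^n\zeta_\cH$, and this is where the gap lies. That hypothesis is what makes the statement work: since each map of $\cH'$ is a composition of maps of $\cH$, it extends past its own domain to a branched cover onto $(\Delta_\cH\setminus\bbR)\cup(\text{real image})$, hence to a local isometry onto $\Delta_\cH\setminus\bbR$ \emph{directly}. The paper's argument works entirely in the metric of $\Delta_\cH\setminus\bbR$: writing $S=\eta^{-1}(\bbR)$ for the extended map, one uses Lemma~\ref{strip_lemma} and the bounded geometry of $\mathbf H^D(m)$ to locate a point $z_0\in S$ within a fixed hyperbolic distance $l=l(m,D)$ of $z$ (the diameter bound is obtained in $\Delta_{\cH'}\cap\bbH$ and then passed to the larger domain via the inclusion), after which Lemma~\ref{hyper_inclusion_lemma} gives $\|S_{\cH'}'(z)\|\ge 1/C(l)>1$ immediately. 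No density-ratio transfer is needed.

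Your compactness argument, by contrast, does not close. After passing to the limit you still need $\|S_{\tilde\cH'_\infty}'(\tilde z_\infty)\|_{\tilde\Delta_\infty\setminus\bbR}>1$, but Lemma~\ref{basic_expansion_lemma} only yields expansion in the $\Delta_{\tilde\cH'_\infty}\setminus\bbR$ metric, so the same obstruction reappears unchanged at the limit. Worse, the link between $\cH_n$ and $\cH'_n$ is lost under the limit (the rescaled $\cH_n$ are uncontrolled and need not converge to a holomorphic pair), so in the limiting configuration there is no prerenormalization structure left to exploit; and without it, when $\tilde\Delta_\infty=\bbC$ there is no mechanism forcing $\|S_{\tilde\cH'_\infty}'\|_{\bbH}>1$ at the specific point (a branched cover between bounded half-plane domains can perfectly well have half-plane-metric derivative $\le 1$ at some points). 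The fix is to use the extension of the maps of $\cH'$ as covers over $\Delta_\cH$, which is exactly what the unused hypothesis provides.
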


\ignore{

\begin{proof}

Let $\cH'=(\eta,\, \xi,\, \nu)$. We will give a proof for the first component $\eta\colon U\to\Delta_{\cH'}$ of the holomorphic pair $\cH'$. For the other two components, the proof is identical.


Since all maps are real-symmetric, from now on we may consider them only in the upper halfplane $\bbH$. Without loss of generality we may assume that $\Im z>0$. Define $U_+= U\cap\bbH$ and $\Delta_+'= \Delta_{\cH'}\cap\bbH$. 
According to Lemma~\ref{strip_lemma}, there exists $\varepsilon=\varepsilon(m, D)>0$, such that if $W\subset\bbC$ is a real-symmetric strip 
$$
W=\{w\in\bbC\mid |\Im w|\le\varepsilon\diam(\Delta_{\cH'})\},
$$
where $\diam(\Delta_{\cH'})$ is the Euclidean diameter of $\Delta_{\cH'}$, then $z\in U_+\setminus W$. It also follows that 
there is a point $z_0\in \partial(U_+\setminus W)$, such that $\eta(z_0)\in\bbR$, and the domain $U_+\setminus W$ has a definite modulus in $\Delta_+'$. The latter implies that there exists a constant $l=l(m, D)>0$, such that
$$
\diam_{\Delta_+'}(U_+\setminus W)<l,
$$
where $\diam_{\Delta_+'}(X)$ denotes the diameter of a set $X$ with respect to the hyperbolic metric of $\Delta_+'$.

Now the first two properties of the lemma imply that the map $\eta$ has an analytic extension to some domain $V\supset U$ so that 
$$
\eta\colon (V\setminus S)\to (\Delta_{\cH}\setminus \bbR)
$$
is a covering map, where $S=\eta^{-1}(\bbR)$. Thus, $\eta$ is a local isometry between $(V\setminus S)$ and $(\Delta_{\cH}\setminus \bbR)$. Observe that since $z_0\in S$, it follows that 
$$
\dist_{\Delta_\cH\setminus\bbR}(z, S)\le \dist_{\Delta_\cH\setminus\bbR}(z, z_0)\le \dist_{\Delta_+'}(z,z_0)<l.
$$
(The second inequality follows from the inclusion $\Delta_+'\subset \Delta_\cH\setminus\bbR$.)
Then, Lemma~\ref{hyper_inclusion_lemma} implies that the inclusion $i\colon (V\setminus S)\to (\Delta_{\cH}\setminus \bbR)$ satisfies
$$
\|i'(z)\| \le C(l),
$$
where the norm is taken with respect to the hyperbolic metric of $\Delta_\cH\setminus\bbR$ in the image and $V\setminus S$ in the domain. The rest of the proof follows in the same way as the proof of Lemma~\ref{basic_expansion_lemma} with $\lambda = 1/C(l)>1$.
\end{proof}

}

\begin{definition}
Given a holomorphic pair $\cH$, its \textit{filled Julia set} $K(\cH)$ is defined as the set of all points in $\overline{\Omega_\cH}$, whose forward orbits under the dynamics of $\overline S_\cH$ do not escape $\overline{\Omega_\cH}$. The Julia set of $\cH$ is denoted by $J(\cH)$ and defined as the boundary of $K(\cH)$.
\end{definition}

\begin{lemma}[{\bf Expansion on $J(\cH)$}]\label{expansion_on_J_lemma}
Let $\mathcal H$ be a holomorphic pair with domain $\Omega$ and range $\Delta$. Let $Q\subset \overline \Omega$ be the set of all points whose orbits under the dynamics of $\overline S_\cH$ land on the interval $\Omega\cap\bbR$. Then $Q$ is dense in $J(\cH)$, and for any $z\in J(\cH)\setminus Q$,
$$
\|(S_\cH^n)'(z)\|\to\infty
$$
in the hyperbolic metric of $\Delta\setminus\bbR$ as $n\to\infty$.
\end{lemma}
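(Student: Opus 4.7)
I will prove the density of $Q$ first and then use it to drive the expansion.

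\emph{Density of $Q$.} Fix $z \in J(\cH)$ and a small neighborhood $U$ of $z$. Since $z \in \partial K(\cH)$, nearby orbits escape $\overline\Omega$, so the family of iterates $\{\overline{S_\cH}^n|_U\}$ fails to be normal at $z$. By a Montel-type argument, the union $\bigcup_n \overline{S_\cH}^n(U)$ omits at most two points of $\bbC$; in particular it meets the open interval $\Omega\cap\bbR$, producing a point $w\in U$ with $\overline{S_\cH}^n(w)\in\Omega\cap\bbR$ for some $n$, i.e.\ $w\in Q$.

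\emph{Expansion: setup.} Fix $z\in J(\cH)\setminus Q$ and set $Y:=\Delta\setminus\bbR$. The orbit of $z$ stays in $\overline\Omega\setminus\bbR$; I will treat the main case in which it lies in $W:=\Omega\setminus\bbR$, the boundary cases being handled by a perturbation argument. Let $W_n$ be the connected component of $(S_\cH^n)^{-1}(Y)$ containing $z$, so that $\{W_n\}$ is a decreasing sequence of hyperbolic subdomains of $Y$ all containing $z$. By property~(iv) of Definition~\ref{holo_pair_def} the critical values of $S_\cH$, and hence of every iterate $S_\cH^n$, are real; therefore $S_\cH^n\colon W_n\to Y$ has no critical points and is an unbranched holomorphic covering, hence a local isometry for the hyperbolic metrics of $W_n$ and $Y$. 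Applying the chain rule together with the inclusion $i_n\colon W_n\hookrightarrow Y$ gives
$$
\|(S_\cH^n)'(z)\|_{Y,Y} \;=\; \|i_n'(z)\|_{W_n,Y}^{-1}.
$$

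\emph{Conclusion and obstacle.} To finish I would show that $\dist_Y(z,\,Y\setminus W_n)\to 0$, which by Lemma~\ref{hyper_inclusion_lemma} forces the right-hand side above to tend to $\infty$. Monotonicity of $W_n$ makes this distance non-increasing in $n$, so convergence along a subsequence suffices. The density of $Q$ just proved supplies points $q_m\in Q$ converging to $z$ in the Euclidean metric, which is comparable to the $Y$-hyperbolic metric near $z$ (a compact subset of $Y$). Each $q_m$ lands on $\Omega\cap\bbR$ after finitely many iterations, so it lies in $Y\setminus W_{n_m}$ for every $n_m$ exceeding that landing time; letting $n_m\to\infty$ yields the desired convergence. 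The main obstacle is making the density step fully rigorous: one must control the iteration of the piecewise-defined $\overline{S_\cH}$ along appropriate orbit branches so that the Montel/non-normality argument applies cleanly. Once density is secured, the expansion step is a direct Schwarz-Pick computation through Lemma~\ref{hyper_inclusion_lemma}.
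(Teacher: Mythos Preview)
Your expansion argument---working on the component $W_n$ of $(S_\cH^n)^{-1}(\Delta\setminus\bbR)$ containing $z$, using the covering property to get a local isometry, and then invoking Lemma~\ref{hyper_inclusion_lemma} for the inclusion $W_n\hookrightarrow\Delta\setminus\bbR$---is essentially the paper's argument, phrased with $W_n$ in place of the paper's $\Omega\setminus Q_n$. Your deduction ``density of $Q$ $\Rightarrow$ expansion'' is also correct and matches the paper. (Incidentally, the paper observes that points of $\partial\Omega$ either escape or land on the real interval, so $J(\cH)\setminus Q\subset\Omega$ and your ``boundary cases'' do not arise.)

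The genuine gap is exactly the one you flag: the density step via Montel does not go through as stated. The shadow $S_\cH$ is piecewise defined, its iterates are not single holomorphic functions on a neighborhood of $z$, and for escaping points $S_\cH^n$ is eventually undefined; so there is no well-posed holomorphic family $\{S_\cH^n|_U\}$ to which non-normality and Montel's three-omitted-values criterion apply. Making this rigorous would require tracking individual holomorphic branches on a shrinking system of subdomains and arguing that at least one branch-family is non-normal and still covers enough of $\bbC$---this is not straightforward and is not how the paper proceeds.

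The paper reverses your order: it sets up the expansion machinery first and then proves density by contradiction, avoiding Montel entirely. Suppose $z_0\in J(\cH)\setminus Q$ is \emph{not} accumulated by $Q$. Then on a ball $B(z_0,r_0)$ the inclusion norms $\|i_n'(z)\|$ stay bounded below, so $\|(S_\cH^n)'(z)\|$ is uniformly bounded in the hyperbolic metric of $\Delta\setminus\bbR$, hence also in the Euclidean metric; thus images of small balls around $z_0$ stay uniformly small. On the other hand, the bound $\|(S_\cH^n)'(z_0)\|\le 1/c_0$ combined with the chain rule and Lemma~\ref{basic_expansion_lemma} forces $\|S_\cH'(z_n)\|\to 1$, which means $z_n=S_\cH^n(z_0)$ approach $\bbR$; the real dynamics then traps the orbit near the interval $[\eta(0),\xi(0)]$, which lies well inside $\Omega$. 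Consequently a whole ball around $z_0$ never escapes $\overline\Omega$, contradicting $z_0\in J(\cH)$. This argument uses only the Schwarz--Pick setup you already have plus the real dynamics, and sidesteps the branch-selection issue completely.
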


\begin{proof}
Define $I:=\overline\Omega\cap\bbR$. For each $n\in\bbN$, consider the set
$$
Q_n:= I\cup \overline S_\cH^{-1}(I)\cup \overline S_\cH^{-2}(I)\cup\ldots\cup \overline S_\cH^{-n}(I).
$$
Then $S_\cH^n\colon\Omega\setminus Q_n\to\Delta\setminus\bbR$ is a local isometry with respect to the hyperbolic metrics of $\Omega\setminus Q_n$ and $\Delta\setminus\bbR$. Note that 
$$
Q=\bigcup_{n=0}^\infty Q_n,
$$
and $J(\cH)\setminus Q \subset \Omega$, since the points from $\partial\Omega$ either escape $\overline\Omega$ or land on the interval $I$. Hence, if $z\in J(\cH)\setminus Q$, then $z\in\Omega$. Furthermore, if $z\in J(\cH)\setminus Q$ belongs to the closure of $Q$, then the hyperbolic distance 
$\dist_{\Delta\setminus\bbR}(z,Q_n)\to 0$ as $n\to\infty$, so for the inclusion $i_n\colon\Omega\setminus Q_n\to\Delta\setminus\bbR$, we have 
$$
\|i_n'(z)\|\to 0\qquad\text{as }n\to\infty,
$$
where the norm is taken with respect to the hyperbolic metric of $\Delta\setminus\bbR$ in the image and $\Omega\setminus Q_n$ in the domain. Thus, we obtain that $\|(S_\cH^n)'(z)\|\to\infty$.

In order to complete the proof, we need to show that $Q$ is dense in $J(\cH)$. Assume, this is not the case. Then there exists $z_0\in J(\cH)\setminus Q$, which is not an accumulation point of the sets $Q_n$. This implies that there exist positive constants $c_0, r_0>0$, such that for every $n\in\bbN$ and for every $z$ from the ball $B(z_0,r_0)$,
$$
\|i_n'(z)\|>c_0.
$$
Then it follows that for every $z\in B(z_0,r_0)$ and every $n\in\bbN$, such that $S_\cH^n(z)$ is defined, we have
\begin{equation}\label{Sn_upper_bound_norm_eq}
\|(S_\cH^n)'(z)\|<1/c_0
\end{equation}
in the hyperbolic metric of $\Delta\setminus\bbR$. Therefore, there exists a constant $c_1>0$, such that 
$$
|(S_\cH^n)'(z)|<c_1,
$$
for all $z$ and $n$ as above. Thus, for every $0<r<r_0$ and every $n\in\bbN$, such that $S_\cH^n$ is defined on the ball $B_r = B(z_0,r)$, we have
\begin{equation}\label{S_Hn_Br_size_eq}
\diam(S_\cH^n(B_r))\le 2c_1 r.
\end{equation}
On the other hand, for every $n\in\bbN$, let $z_n:= S_\cH^n(z_0)$. Then it follows from~(\ref{Sn_upper_bound_norm_eq}), Lemma~\ref{basic_expansion_lemma} and the Chain Rule that 
$$
\lim_{n\to\infty} \|S_\cH'(z_n)\|=1,
$$
which implies that the points $z_n$ converge to the real line. Next, from continuity arguments and the dynamics of the map $S_\cH$ on the real line, we conclude that the points $z_n$ converge to the interval $I_\eta\cup I_\xi$ 
which is well inside $\Omega$. Therefore, we obtain from~(\ref{S_Hn_Br_size_eq}) that there exists a ball around $z_0$ that does not escape from $\overline\Omega$ under iterates of $\overline S_\cH$. The latter contradicts the fact that $z_0\in J(\cH)$.
\end{proof}

\section{Filled Julia sets of infinitely renormalizable pairs have  empty interior}

Next, we restrict our consideration to the case of holomorphic pairs with irrational rotation numbers. Our main goal here is the following proposition: 

\begin{proposition}\label{K_is_J_proposition}
Let $\cH$ be a holomorphic pair with an irrational rotation number of bounded type. Then its filled Julia set $K(\cH)$ has no interior points and coincides with the Julia set $J(\cH)$, which is the closure of all preimages of the interval $\Omega_\cH\cap\bbR$ under the dynamics of $\overline S_\cH$.
\end{proposition}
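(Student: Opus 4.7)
The plan is to prove that $K(\cH)$ has empty interior; the identities $K(\cH)=J(\cH)=\overline Q$ then follow from $J(\cH)=\partial K(\cH)$, Lemma~\ref{expansion_on_J_lemma}, and the fact that orbits of $Q$-points are trapped in the forward-invariant large dynamical interval $\overline\Omega_\cH\cap\bbR$, whose dynamics is conjugate to an irrational rotation by Theorem~\ref{rotation_conjugacy_theorem} applied to any gluing of $\cH$ (see Remark~\ref{gluing_remark}).

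Suppose for contradiction that $\mathrm{int}\,K(\cH)$ contains a point $z_0$; passing to a nearby interior complex point and using the $\bbR$-symmetry of $K(\cH)$, we may assume $z_0\in\bbH$. Let $W\subset\bbH\cap\mathrm{int}\,K(\cH)$ be an open disk around $z_0$. Via the gluing construction (Remark~\ref{gluing_remark}) we may work in the cylinder model, where the shadow becomes single-valued and analytic, so all iterates $S_\cH^n|_W$ are holomorphic; the orientation-preservation of $\eta,\xi,\nu$ on their real sections forces $S_\cH^n(W)\subset\bbH\cap\overline\Omega_\cH$ for every $n$, whence $W\cap Q=\emptyset$. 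The family $\{S_\cH^n|_W\}$ is normal in $\Delta_\cH$ by Montel. If the orbit of $z_0$ were bounded away from $\bbR$, Lemma~\ref{basic_expansion_lemma}(second part) would give exponential hyperbolic expansion $\|(S_\cH^n)'(z_0)\|_{\Delta_\cH\setminus\bbR}\ge r^n$ for some $r>1$; with bounded hyperbolic density on the orbit, this would blow up the Euclidean derivative, contradicting Cauchy's bound $|(S_\cH^n)'(z_0)|\le\diam(\Delta_\cH)/r_0$, where $r_0$ is the Euclidean radius of $W$. Hence a subsequence $S_\cH^{n_k}(z_0)$ approaches $\bbR$, and a further Montel-subsequence yields a locally uniform limit $f\colon W\to\overline{\bbH}\cap\overline\Delta_\cH$ with $f(z_0)\in\bbR$.

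The harmonic function $\Im f\ge 0$ on $W$ attains its infimum $0$ at the interior point $z_0$, so by the minimum principle $\Im f\equiv 0$, and $f$ is a real constant $f\equiv c$; Weierstrass gives $(S_\cH^{n_k})'(z_0)\to 0$. The bounded-type hypothesis enters now: Theorem~\ref{ComplexBounds_theorem} extends $p\cR^n\zeta_\cH$ to holomorphic pairs $\cH_n\in\mathbf H^D(m)$ with uniformly controlled geometry and shrinking domains, while by Theorem~\ref{rotation_conjugacy_theorem} the real orbit of $c$ under $\overline S_\cH|_\bbR$ is dense in $\overline\Omega_\cH\cap\bbR$; hence the diagonal iterates $S_\cH^{n_k+m_j}(z_0)$ visit arbitrarily deep renormalization domains $\Omega_{\cH_n}$. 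Each exit of such a domain contributes a definite factor $\lambda>1$ to $\|(S_\cH^{n_k+m_j})'(z_0)\|_{\Delta_\cH\setminus\bbR}$ by Lemma~\ref{expansion_at_escaping_points_lemma}, so this hyperbolic norm tends to infinity. Combining this with $\rho_{\Delta_\cH\setminus\bbR}(w)\asymp 1/\dist(w,\bbR)$ near $\bbR$ and applying Koebe distortion (with bounded-degree control inherited from $\cH_n\in\mathbf H^D(m)$) to a univalent restriction of $S_\cH^{n_k+m_j}$, we force the Euclidean image $S_\cH^{n_k+m_j}(W)$ to contain a disk around $S_\cH^{n_k+m_j}(z_0)$ of radius exceeding $\dist(S_\cH^{n_k+m_j}(z_0),\bbR)$; such a disk crosses $\bbR$, contradicting $S_\cH^n(W)\subset\bbH$.

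The principal obstacle is the final quantitative step: marrying the unbounded hyperbolic expansion supplied by Lemma~\ref{expansion_at_escaping_points_lemma} to the bounded-degree Koebe distortion estimate to force the Euclidean image across $\bbR$. This is precisely where the bounded-type assumption is essential, via the complex bounds of Theorem~\ref{ComplexBounds_theorem} and the bounded geometry of dynamical partitions of Theorem~\ref{real_bounds_thm}. A secondary technical nuisance is the piecewise nature of the shadow $S_\cH$, which is dealt with by passing to the analytic circle-map extension through the gluing of Remark~\ref{gluing_remark}.
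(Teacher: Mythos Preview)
Your argument has a fundamental gap at the very first step: the claim that $S_\cH^n(W)\subset\bbH$ for all $n$ is false. Orientation-preservation of $\eta,\xi,\nu$ on their real intervals does \emph{not} force these maps to preserve the upper half-plane. Near a real critical point $c$ of odd order $d\ge 3$ the map is locally $z\mapsto a(z-c)^d$ with $a>0$; as $\arg(z-c)$ sweeps $(0,\pi)$ the argument of the image sweeps $(0,d\pi)$, so points of $\bbH$ are sent into $\bbH^-$. More globally, each branched covering $\eta\colon U\to(\Delta\setminus\bbR)\cup\eta(I_U)$ of degree $d>1$ has non-real preimages of the real interval $\eta(I_U)$ lying in $U\cap\bbH$; thus the set $Q$ of Lemma~\ref{expansion_on_J_lemma} contains genuine non-real points, and nothing prevents a disk $W\subset\bbH\cap\mathrm{int}\,K(\cH)$ from meeting $Q$. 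Passing to the cylinder model via Remark~\ref{gluing_remark} does not rescue this: the glued map $f_\zeta$ is analytic on an annulus but is still a branched cover near each critical point and still fails to preserve the upper half-annulus. Without $S_\cH^n(W)\subset\bbH$ you lose the minimum-principle step ($\Im f\ge 0$ attaining its interior infimum $0$), and with it the conclusion $f\equiv c\in\bbR$.

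Even if one grants convergence of iterates to the real interval (this is the paper's Lemma~\ref{Fatou_convergence_to_R_lemma}, proved without any half-plane claim), your final contradiction is not justified. The divergence $\|(S_\cH^N)'(z_0)\|_{\Delta_\cH\setminus\bbR}\to\infty$ is perfectly compatible with the Euclidean derivative tending to zero, since the hyperbolic density at $S_\cH^N(z_0)$ blows up near $\bbR$. To apply Koebe and force $S_\cH^N(W)$ to contain a disk of radius exceeding $\dist(S_\cH^N(z_0),\bbR)$, you would need $S_\cH^N$ to be univalent (or of uniformly bounded degree) on a \emph{fixed} sub-disk of $W$; but as $N$ grows the iterate may accumulate unboundedly many critical points on $W$, and no such control is available. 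The paper sidesteps this entirely: after Lemma~\ref{Fatou_convergence_to_R_lemma} it excludes periodic Fatou components by a direct topological argument (Lemma~\ref{no_periodic_components_lemma}), then excludes wandering components by building a rational Blaschke-product model with the prescribed signature (Lemma~\ref{model_family_lemma}), invoking Sullivan's No Wandering Domains Theorem for that rational map (Lemma~\ref{no_wandering_lemma}), and transferring the conclusion to $\cH$ via the quasiconformal conjugacy of Theorem~\ref{qc_conjugacy_theorem}. The bounded-type hypothesis is used only at this last transfer step.
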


First, we observe that it is sufficient to show that the open set $K(\cH)\setminus J(\cH)$ has no connected components, hence is empty. The second assertion of Proposition~\ref{K_is_J_proposition} then follows immediately from Lemma~\ref{expansion_on_J_lemma}.

We also note that Proposition~\ref{K_is_J_proposition} is a generalization of the analogous statement for unicritical holomorphic pairs. The latter was proven for pairs with a cubic critical point in~\cite{DeFar} (see also~\cite{Yamp-towers}) by constructing a topological conjugacy between $\cH$ and a holomorphic pair whose components are restrictions of certain entire functions which are quasiconformally rigid. In particular, these entire functions have no wandering domains. The same construction was carried out for general unicritical holomorphic pairs in \cite{Yampolsky_unicritical}, where quasiconformally rigid Blaschke product models were constructed. In the case of several critical points, constructing quasiconformally rigid models either in the family of rational maps or entire functions is a challenge (cf. \cite{Yam-bicubic} for the case of two cubic critical points). However, proving that there are no wondering domains can still be done by a modification of the original argument, which we present below.

The proof of Proposition~\ref{K_is_J_proposition} will be split into several steps.

\begin{lemma}\label{Fatou_convergence_to_R_lemma}
Let $\cH$ be a holomorphic commuting pair with the underlying commuting pair $(\eta,\xi)$, and let $U\subset K(\cH)\setminus J(\cH)$ be a nonempty connected component of $K(\cH)\setminus J(\cH)$. Then for any $z\in U$, we have $S_\cH^n(z)\to [\eta(0),\xi(0)]$ in the Euclidean metric on $\bbC$, as $n\to\infty$.
\end{lemma}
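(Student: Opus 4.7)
The plan is to combine normality of the iterate family on the Fatou component $U$ with the hyperbolic expansion of $S_\cH$ provided by Lemma~\ref{basic_expansion_lemma}, and then to invoke the irrational real dynamics of the underlying commuting pair to locate the $\omega$-limit set on the real axis.

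Since $U\subset K(\cH)\subset\overline{\Omega_\cH}\subset\Delta_\cH$ and $K(\cH)$ is forward-invariant under $S_\cH$, every iterate $S_\cH^n$ maps $U$ into the bounded set $\overline{\Omega_\cH}$. By Montel's theorem, the family $\{S_\cH^n|_U\}_{n\ge 0}$ is therefore normal on $U$. I also note that since $\Omega_\cH\cap\bbR\subset J(\cH)$ and the Julia set $J(\cH)$ is closed under $\overline S_\cH^{-1}$, the orbit $z_n:=S_\cH^n(z)$ avoids the real axis for every $n$, so all iterates $S_\cH^n$ are genuine analytic maps at $z$.

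The heart of the argument is to show that $\Im(z_n)\to 0$. Suppose not; then there exist $\varepsilon>0$ and an infinite set $N\subset\bbN$ with $|\Im(z_n)|\ge\varepsilon$ for $n\in N$. By Lemma~\ref{basic_expansion_lemma}, $\|S_\cH'(z_n)\|\ge r(\varepsilon)>1$ in the hyperbolic metric of $\Delta_\cH\setminus\bbR$ at those $n$, and $\|S_\cH'(z_n)\|>1$ at all remaining steps. The chain rule then forces $\|(S_\cH^n)'(z)\|_{\Delta_\cH\setminus\bbR}\to\infty$ along $N$. On the compact set $\{w\in\overline{\Omega_\cH}:|\Im w|\ge\varepsilon\}$, the hyperbolic density $\rho_{\Delta_\cH\setminus\bbR}$ is uniformly bounded above, while $\rho_{\Delta_\cH\setminus\bbR}(z)$ is a fixed positive constant; consequently the Euclidean derivatives $|(S_\cH^n)'(z)|$ also tend to $\infty$ along $N$, contradicting normality of the iterate family at $z$. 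Hence $\Im(z_n)\to 0$.

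Once $|\Im(z_n)|$ falls below the threshold $\varepsilon_0$ of Lemma~\ref{strip_lemma}, all subsequent iterates remain in $\Omega_\cH$ while their imaginary parts continue to decay, so every accumulation point of $\{z_n\}$ lies in $\overline{\Omega_\cH}\cap\bbR$. The $\omega$-limit set $\omega(z)$ is closed and forward-invariant under $\overline S_\cH$. To upgrade this to $\omega(z)\subset[\eta(0),\xi(0)]$, I would argue that on the small extension $(\overline{\Omega_\cH}\cap\bbR)\setminus[\eta(0),\xi(0)]$ the map $\overline S_\cH$ agrees with $\hat\eta$ or $\hat\xi$ and sends each point back into the basic interval after finitely many iterates, and then appeal to Yoccoz's Theorem~\ref{rotation_conjugacy_theorem}, which says that the commuting pair dynamics on $[\eta(0),\xi(0)]$ is topologically conjugate to an irrational rotation and therefore minimal. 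The main obstacle I anticipate is the quantitative comparison of hyperbolic and Euclidean metrics used in the expansion step, together with controlling how orbits in $\overline{\Omega_\cH}\cap\bbR$ near the endpoints of $[\eta(0),\xi(0)]$ are absorbed into the basic interval.
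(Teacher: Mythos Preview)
Your core argument is the same as the paper's: assume the orbit does not approach the real line, use Lemma~\ref{basic_expansion_lemma} to get $\|(S_\cH^n)'(z)\|\to\infty$ in the hyperbolic metric of $\Delta_\cH\setminus\bbR$, convert this to blow-up of the Euclidean derivative along a subsequence bounded away from $\bbR$, and contradict boundedness of the images (you phrase this as a violation of normality; the paper phrases it as a small disk in $U$ being mapped out of $\Omega_\cH$, which is the same thing).

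Two of your auxiliary steps are, however, unjustified under the hypotheses of the lemma and, fortunately, also unnecessary. First, Lemma~\ref{strip_lemma} is stated only for pairs in $\mathbf H^D(m)$, which is not assumed here; but you do not need it, since $z\in K(\cH)$ already forces $S_\cH^n(z)\in\overline{\Omega_\cH}$ for all $n$, so once $\Im z_n\to 0$ the accumulation set automatically lies in $\overline{\Omega_\cH}\cap\bbR$. Second, the lemma does not assume irrational rotation number, so Theorem~\ref{rotation_conjugacy_theorem} is not available; but minimality is irrelevant to the conclusion. What is actually needed to pass from $\omega(z)\subset\overline{\Omega_\cH}\cap\bbR$ to $\omega(z)\subset[\eta(0),\xi(0)]$ is just that the interval $[\eta(0),\xi(0)]$ is forward-invariant under $\overline S_\cH$ and that points in $(\overline{\Omega_\cH}\cap\bbR)\setminus[\eta(0),\xi(0)]$ enter it after boundedly many iterates (property~\ref{k_H_property} of Definition~\ref{holo_pair_def}); continuity then prevents the orbit from re-escaping a small neighborhood of $[\eta(0),\xi(0)]$. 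The paper compresses this entire passage into the single sentence ``Then it does not converge to the real line''; your more explicit treatment is welcome, just drop the two superfluous appeals.
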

\begin{proof}
Assume that the sequence of points $\{S_\cH^n(z)\}_{n\in\bbN}$ does not converge to the interval $[\eta(0),\xi(0)]$. Then it does not converge to the real line, hence, according to Lemma~\ref{basic_expansion_lemma}, we have
$$
\|(S_\cH^n)'(z)\|\to\infty \qquad\text{as }n\to\infty
$$
in the hyperbolic metric of $\Delta_\cH\setminus\bbR$. After extracting a subsequence of indices $\{n_k\}$, such that the points $S_\cH^{n_k}(z)$ stay some fixed distance away from the real line, we conclude that 
$$
|(S_\cH^{n_k})'(z)|\to\infty \qquad\text{as }k\to\infty.
$$
The latter implies that a small disk $D\Subset U$, centered at $z$, gets arbitrarily large under the iterates of $S_\cH^{n_k}$, hence $S_\cH^{n_k}(D)\not\subset\Omega_\cH$, for some $k\in\bbN$, which is a contradiction to the assumption that $U\subset K(\cH)$.
\end{proof}

\begin{lemma}\label{no_periodic_components_lemma}
Let $\cH$ be a holomorphic commuting pair with an irrational rotation number. Then the set $K(\cH)\setminus J(\cH)$ does not have any nonempty periodic connected components.
\end{lemma}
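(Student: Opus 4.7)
The plan is to assume for contradiction that $U$ is a nonempty connected component of $K(\cH)\setminus J(\cH)$ which is periodic of minimal period $p\geq 1$, so that $g:=\overline{S}_\cH^p|_U\colon U\to U$ is a holomorphic self-map of the open set $U$. I will derive a contradiction by combining Lemma~\ref{Fatou_convergence_to_R_lemma}, which forces every forward $g$-orbit in $U$ to accumulate on the real dynamical interval $[\eta(0),\xi(0)]$ in the Euclidean metric, with Theorem~\ref{rotation_conjugacy_theorem}, which applied to the associated circle map $f_\zeta$ from~\S\ref{sec:glue} implies that $f_\zeta$ and every iterate $f_\zeta^p$ are minimal (and in particular admit no periodic points on $\bbT$), since $p\,\rho(\zeta)\bmod 1$ remains irrational. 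The proof splits into two cases according to whether $g$ does or does not have a fixed point in $U$.

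\textbf{Case 1: $g$ has a fixed point $z_0\in U$.} Then the $S_\cH$-orbit of $z_0$ is a finite set of cardinality dividing $p$. Since a finite sequence can only accumulate on a set by already lying in that set, Lemma~\ref{Fatou_convergence_to_R_lemma} forces the orbit of $z_0$ into $[\eta(0),\xi(0)]\subset\bbR$, so in particular $z_0\in\bbR$. Then $z_0$ is a fixed point of $\overline{S}_\cH^p$ on the real dynamical interval, which under the gluing construction of \S\ref{sec:glue} descends to a periodic point of $f_\zeta^p$ on $\bbT$, contradicting minimality.

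\textbf{Case 2: $g$ has no fixed point in $U$.} I plan to invoke the iteration theory of holomorphic self-maps of hyperbolic Riemann surfaces: by the Denjoy--Wolff theorem (for simply connected $U$) together with the Heins-type extension to multiply connected hyperbolic plane domains, the dynamics of $g$ falls into one of two alternatives. Either $g$ is conformally conjugate on $U$ to an irrational rotation of a disk or an annulus (Siegel or Herman type), in which case every orbit of $g$ is recurrent and does not accumulate on $\partial U$, directly contradicting Lemma~\ref{Fatou_convergence_to_R_lemma}; or the iterates $g^n$ converge locally uniformly on $U$ to a constant $\zeta_0\in\partial U$. In the latter case, Lemma~\ref{Fatou_convergence_to_R_lemma} forces $\zeta_0\in[\eta(0),\xi(0)]$, and continuity of the extension $\overline{S}_\cH$ along the real interval yields $\overline{S}_\cH^p(\zeta_0)=\zeta_0$. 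The point $\zeta_0$ then descends via the gluing construction to a periodic point of $f_\zeta^p$, contradicting minimality just as in Case~1.

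\textbf{Main obstacle.} The delicate step is Case~2: I must verify the Denjoy--Wolff-type dichotomy in a possibly multiply connected Fatou component, and ensure that the boundary limit $\zeta_0$ is a genuine fixed point of the continuous extension $\overline{S}_\cH^p$ even when $\zeta_0$ happens to coincide with a critical point of $\eta$, $\xi$, or $\nu$. Continuity across the real line at such points is guaranteed by property~(\ref{analyt_extensions_property}) of Definition~\ref{holo_pair_def} (real critical values), but checking that $g^n(z)\to\zeta_0$ together with this continuity implies $\overline{S}_\cH^p(\zeta_0)=\zeta_0$ requires a careful boundary-behavior argument. If the Denjoy--Wolff route proves awkward, an alternative is to use the strict expansion of the hyperbolic metric on $\Delta\setminus\bbR$ provided by Lemma~\ref{basic_expansion_lemma} against Schwarz--Pick contraction in the hyperbolic metric of $U\subset\Delta\setminus\bbR$: minimality of $f_\zeta^p$ forces the $\omega$-limit set of any $z_0\in U$ under $g$ to project onto all of $\bbT$, hence to be the full interval $[\eta(0),\xi(0)]$, and this should be incompatible with $d_U$-distances being bounded under iteration while $d_{\Delta\setminus\bbR}$-distances must grow without bound as orbits approach distinct real boundary points.
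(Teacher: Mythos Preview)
Your Case~1 is fine, but Case~2 carries real difficulties that you correctly flag but do not resolve. The Denjoy--Wolff/Heins dichotomy for a holomorphic self-map $g\colon U\to U$ of a multiply connected hyperbolic plane domain does \emph{not} in general yield convergence of iterates to a single boundary point; one only gets divergence to the ideal boundary. Even combining this with Lemma~\ref{Fatou_convergence_to_R_lemma}, the $\omega$-limit set of a point could a priori be a nontrivial closed subset of $[\eta(0),\xi(0)]\cap\partial U$ rather than a singleton. Your fallback argument (Schwarz--Pick in $d_U$ versus expansion in $d_{\Delta\setminus\bbR}$) does not close this gap: bounded $d_U$-distance between $g^n(z_1)$ and $g^n(z_2)$ is perfectly compatible with both points tending to $\partial U$ at Euclidean-distant locations, since $d_U$ blows up near $\partial U$. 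There is also the discontinuity of $S_\cH$ at $0$ (where the shadow is $\nu$, but nearby points in the $U$- or $V$-pieces map by $\eta$ or $\xi$), so even if $\zeta_0$ were a single point, passing from $g^n(z)\to\zeta_0$ to $\overline S_\cH^p(\zeta_0)=\zeta_0$ is delicate precisely when $\zeta_0=0$.

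The paper sidesteps all of this with a short topological argument. From Lemma~\ref{Fatou_convergence_to_R_lemma} one gets $\partial U\cap[\eta(0),\xi(0)]\neq\varnothing$. If this intersection is a single point, periodicity of $U$ forces that point to be periodic for the commuting pair, contradicting irrationality (this is essentially your Case~1). If the intersection contains at least two points, then by density of orbits on $[\eta(0),\xi(0)]$ some iterate $S_\cH^n(U)$ has boundary points on both sides of $0$; since $0$ is a critical point of $\nu$ of order $\ge 3$, the preimage $S_\cH^{-1}([\eta(0),\xi(0)])$ contains arcs emanating from $0$ into each half-plane, and a connected open set whose closure meets both sides of $0$ must cross one of these arcs. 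This forces $S_\cH^n(U)\cap J(\cH)\neq\varnothing$, a contradiction. This two-line dichotomy on the cardinality of $\partial U\cap[\eta(0),\xi(0)]$ replaces your entire Denjoy--Wolff machinery and avoids the boundary-regularity issues altogether.
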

\begin{proof}
(c.f.~\cite{DeFar}.) 
Let $(\eta,\xi)$ be the underlying commuting pair of $\cH$, and assume that $U\subset K(\cH)\setminus J(\cH)$ is a nonempty periodic connected component of $K(\cH)\setminus J(\cH)$. Then, according to Lemma~\ref{Fatou_convergence_to_R_lemma}, the intersection $\partial U\cap [\eta(0),\xi(0)]$ is nonempty. If this intersection consists of a single point, then this point must be periodic for the commuting pair $(\eta,\xi)$, which is not possible, since the pair $(\eta,\xi)$ has an irrational rotation number. If the intersection $\partial U\cap [\eta(0),\xi(0)]$ contains at least two distinct points, then there exists an integer $n\in\bbN$, such that the intersection $\partial (S_\cH^n(U))\cap [\eta(0),\xi(0)]$ has points both to the left and to the right from zero. For topological reasons, the latter implies that the intersection $S_\cH^n(U)\cap S_\cH^{-1}([\eta(0),\xi(0)])$ is nonempty, therefore, $S_\cH^n(U)\cap J(\cH)\neq\varnothing$, which leads to a contradiction.
\end{proof}

\begin{lemma}\label{model_family_lemma}
	For any signature $\sigma$ there exists a rational map $F\colon\hat\bbC\to\hat\bbC$ that restricts to a homeomorphism of the unit circle, so that the lift $f\colon\bbT\to\bbT$ of this homeomorphism via the projection $z\mapsto e^{2i\pi z}$, is an analytic multicritical circle map with a critical point at zero and the signature $\sigma$.
\end{lemma}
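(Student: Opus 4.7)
The plan is to build a model with the prescribed signature by exhibiting a parametric family of rational maps whose $S^1$-restrictions realize every admissible rotation number and every admissible vector of invariant measures between critical points. Given $\sigma=(\rho;N;d_0,\ldots,d_{N-1};\delta_0,\ldots,\delta_{N-1})$, I will first fix an $\RR$-symmetric rational model carrying critical points on $S^1$ of the prescribed odd orders at prescribed locations, and then use continuity of the rotation number and of the unique invariant measure to hit the target signature.

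For the construction itself, I would take a rational family of the Blaschke-type form, built from real-symmetric factors tailored so that the critical points of the restriction $F_{\vec t}|_{S^1}$ sit precisely at the prescribed points $c_0=1,\,c_1(\vec t),\ldots,c_{N-1}(\vec t)\in S^1$ with local representations $w\mapsto w_*+(\phi(w))^{d_j}$ required by Definition~\ref{signature_def}. The vector $\vec t$ will parametrize (i) a rotational parameter $\alpha\in\RR/\ZZ$ acting as a precomposed Möbius rotation, which allows us to tune the rotation number, and (ii) $N-1$ positional parameters controlling the location of the critical points $c_1,\ldots,c_{N-1}$ (the first one, $c_0$, being normalized to $1$). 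After lifting by $z\mapsto e^{2\pi iz}$, this yields a family of analytic circle maps with critical points at $0$ and $N-1$ other marked points on $\bbT$.

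Next I would run a two-stage matching argument. First, for each fixed choice of positions, vary the rotational parameter $\alpha$ and use the continuity and monotonicity of the Poincaré rotation number, together with its surjectivity onto $[0,1]$ for such a family, to find $\alpha=\alpha(\vec t)$ with $\rho(F_{\vec t})=\rho$. Second, view the assignment $(c_1,\ldots,c_{N-1})\mapsto(\delta_0,\ldots,\delta_{N-2})$ (with $\delta_{N-1}=1-\sum_{j<N-1}\delta_j$) as a continuous map between two $(N-1)$-dimensional simplices: the open simplex of admissible positions and the open simplex of admissible measures. By a degree/boundary argument — degenerate boundary configurations send a $\delta_j$ to $0$ or $1$ — the map is surjective onto the interior, giving positions realizing the target $(\delta_j)$. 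Combining with the rotation-number adjustment produces the desired model.

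The main obstacle is the third step: proving that the measure-assignment map has full image. This is the only non-soft ingredient, and it reduces to showing that $\delta_j$ varies continuously with $\vec t$ (a consequence of the unique-ergodicity given by Theorem~\ref{rotation_conjugacy_theorem} together with the compactness of our family) and that suitable boundary degenerations force each coordinate to sweep through $[0,1]$. Once this is settled, verifying that the resulting lifted map is an analytic multicritical circle map with the prescribed signature is a direct check against Definition~\ref{signature_def}, since by construction the critical orders, the position of $c_0=0$, the rotation number, and the measures $\delta_j$ all match.
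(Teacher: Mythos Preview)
Your overall strategy matches the paper's: build a parametric Blaschke-type family, use one parameter to hit the rotation number $\rho$ by monotonicity, and then use a topological surjectivity argument on the remaining $(N-1)$-dimensional parameter space to hit the vector $(\delta_0,\dots,\delta_{N-1})$ in the simplex. The continuity of the map to the simplex via unique ergodicity, and the boundary-to-boundary behavior, are also the right ingredients.

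Where you diverge from the paper is in the \emph{construction of the family}. You propose to parametrize directly by the positions $c_1,\dots,c_{N-1}$ of the critical points on $S^1$, asserting the existence of a Blaschke-type map with critical points of the prescribed orders at those prescribed locations that is still a circle homeomorphism. This is exactly the step you have not carried out, and it is not soft: producing such a map for arbitrary $N$, arbitrary odd orders $d_j$, and arbitrary positions is the substantive part of the lemma. The paper avoids this by a composition trick: it takes \emph{basic} Blaschke fractions $B_j$ (each a circle homeomorphism with a single critical point of order $d_j$ at $1$, whose existence is quoted from the unicritical theory) and sets
\[
F = e^{-2\pi i\theta_{N-1}}B_{N-1}\circ e^{-2\pi i\theta_{N-2}}B_{N-2}\circ\cdots\circ e^{-2\pi i\theta_0}B_0,
\]
so the parameters are the rotations $\theta_j$, not the critical-point positions. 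This makes the family automatically well-defined and gives clean boundary behavior: setting $\theta_j=0$ collapses two consecutive factors into a single basic map, reducing $N$ to $N-1$. That reduction is what drives the paper's surjectivity proof, which is an explicit induction on $N$ showing that the facets of the parameter simplex $T_N$ map onto the facets $S_N^j$ of the target simplex, hence $\Lambda(\overline{T_N})=\overline{S_N}$.

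Your degree/boundary sketch would work in principle, but note that ``boundary goes to boundary'' is not enough by itself; you need the restriction to the boundary to have nonzero degree, which is precisely what the paper's induction establishes. If you flesh out your approach, you will likely end up reproving the same inductive statement --- and you will still need an explicit construction of the family, for which the composition-of-basic-maps idea is the missing ingredient.
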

\begin{proof}
	
	Given a criticality $d\in 2\bbN+1$, we will say that a rational map $B\colon\hat{\bbC}\to\hat{\bbC}$ is \textit{basic}, if it restricts to a homeomorphism of the unit circle $S^1\subset\bbC$ with a fixed point at $z=1$, which is also the unique critical point of $B|_{S_1}$. It is well known that such a map is necessarily a Blaschke fraction. Existence of such maps with arbitrary criticality $d\in 2\bbN+1$ was established in~\cite{Yampolsky_unicritical}.
	
	Let $N\ge 1$ be an arbitrary integer. For any $N$-tuple of basic Blaschke fractions $B_0,\dots,B_{N-1}$ and any $N$-tuple of real numbers $\theta_0,\theta_1,\ldots,\theta_{N-1}\in [0,1]$, the map 
	$$
	F = e^{-2\pi i\theta_{N-1}}B_{N-1}\circ e^{-2\pi i\theta_{N-2}} B_{N-2}\circ\ldots\circ 
	e^{-2\pi i\theta_{0}} B_0
	$$
	is a Blaschke fraction whose restriction to the unit circle lifts via the projection $z\mapsto e^{2\pi i z}$ to a multicritical circle map with at most $N$ critical points.
	
	Fix an arbitrary irrational rotation number $\rho\in (0,1)$. We will show that for any signature 
	$$
	\sigma = (\rho; N; d_0, d_1,\ldots,d_{N-1}; \delta_0,\delta_1, \ldots, \delta_{N-1}),
	$$
	where $d_j$ are the criticalities of the corresponding basic maps $B_j$, there exist real numbers $\theta_0,\theta_1,\ldots,\theta_{N-1}\in [0,1]$, such that the map $F$ lifts to a multicritical circle map with signature $\sigma$.
	
	According to the definition of the signature, the $N$-tuple $(\delta_0,\ldots,\delta_{N-1})$ is an arbitrary point of the $(N-1)$-simplex
	$$
	S_N = \{(x_0,\ldots,x_{N-1})\in\bbR^N \mid \forall j\in [0,N-1], x_j>0,\text{ and } \sum_{j=0}^{N-1}x_j=1\}.
	$$
	The boundary $\partial S_N = \overline S_N\setminus S_N$ corresponds to ``degenerate'' signatures, where two or more critical points collapse into one. In the course of this proof we will consider them as valid signatures as well. For every $j=0,\dots,N-1$, let
	$$
	S_N^j = \overline S_N\cap \{(x_0,\ldots,x_{N-1})\in\bbR^N \mid x_j=0\}
	$$
	denote the $(N-2)$-dimensional facets of the simplex $S_N$.
	
	First, note that due to monotonic and continuous dependence of the rotation number on the parameters, for each tuple $(\theta_0,\ldots,\theta_{N-2})$, there exists $\theta_{N-1}\in[0,1)$ such that $\rho(F|_{S^1})=\rho$. Since $\rho\not\in\bbQ$, such a number $\theta_{N-1}$ is unique.
	Assuming that $\theta_{N-1} = \theta_{N-1}(\theta_0,\ldots,\theta_{N-2})$ is always chosen so that $\rho(F|_{S^1})=\rho$, it follows that the homeomorphism $h\colon S^1\to \bbT$ that takes $1$ to $0$ and conjugates $F|_{S^1}$ with the rigid rotation, depends continuously on the remaining parameters $\theta_0,\ldots, \theta_{N-2}$. 
	Therefore, the map $\Lambda\colon [0,1]^{N-1}\to\overline{S}_N$ taking the $(N-1)$-tuple $(\theta_0,\ldots, \theta_{N-2})$ to the $N$-tuple $(\delta_0,\dots,\delta_{N-1})$ of distances in the signature of $F|_{S^1}$, is continuous at all points $(\theta_0,\ldots, \theta_{N-2})$, for which $\Lambda(\theta_0,\ldots, \theta_{N-2})\in S_N$.
	
	Let $T_N\subset [0,1]^{N-1}$ be the connected component of the preimage $\Lambda^{-1}(S_N)$, such that $(0,\dots,0)\in \overline T_N$. By the continuity property of $\Lambda$, we have
	$$
	\Lambda(\partial T_N)\subset \partial S_N.
	$$
	Of course, the set $T_N$ and the map $\Lambda$ depend on the choice of the basic maps $B_j$. We will prove by induction on $N$ that for any such choice, the set $T_N$ is homotopic to a point and $\Lambda(\overline T_N)=\overline S_N$. This will complete the proof of the Lemma.

        The base of induction for $N=1$ is evident.
	Assume that the above statement holds for all $N<k$, where $k\ge 2$. We will prove it for $N=k$.
	
	For $j=0,\dots, N-2$, consider the sets 
	$$
	U_j = \{(\theta_0,\dots,\theta_{N-2})\in\partial T_N\mid \theta_j=0\},
	$$
	and also define 
	$$
	U_{N-1} = \overline{\partial T_N\setminus \cup_{j=0}^{N-2} H_j}.
	$$
	It follows from the definition of the map $\Lambda$ that $U_{N-1}$ is the maximal subset of $\partial T_N$, such that $\Lambda(U_{N-1})\subset S_N^{N-1}$, or equivalently, $U_{N-1}$ consists of all $(N-1)$-tuples $(\theta_0,\dots,\theta_{N-2})\in\partial T_N$, for which $e^{2\pi i \theta_{N-1}}F(1) = 1$. Due to monotonic dependence of the expression $e^{2\pi i \theta_{N-1}}F(1)$ on each individual parameter $\theta_0,\dots,\theta_{N-2}$, it follows that $U_{N-1}$ is a graph of a continuous function over each of the sets $U_0,\dots,U_{N-2}$. The latter implies that the sets $U_0,\dots,U_{N-1}$ and $T_N$ are homotopic to a point, and $T_N$ can be viewed as an $(N-1)$-dimensional simplex whose $(N-2)$-dimensional facets are $U_0,\dots, U_{N-1}$.
	
	Since composition of basic Blaschke fractions is again a basic Blaschke fraction, it follows that for each $j=0,\dots, N-2$, the facet $U_j$ coincides with the set $T_{N-1}$ for some $N-1$ basic Blaschke fractions. Hence, by the inductive assumption,
	\begin{equation}\label{lambda_U_eq}
	\Lambda(\overline U_j) = S_N^j, \qquad\text{for }j=0,\dots,N-2.	
	\end{equation}
	In particular, since $T_N$ has a structure of a simplex and $\Lambda$ maps facets of $T_N$ to corresponding facets of $S_N$, this implies that $\Lambda$ takes the $(N-3)$-dimensional boundary of $\partial U_{N-1}$ surjectively onto the boundary of $S_N^{N-1}$. Since the map $\Lambda$ is continuous, and both $S_N^{N-1}$ and $U_{N-1}$ are homotopic to a point, it follows that $\Lambda(\overline U_{N-1}) = S_N^{N-1}$. Combining this with~(\ref{lambda_U_eq}), we conclude that $\Lambda(\partial T_N)=\partial S_N$. Finally, since $T_N$ and $S_N$ are both homotopic to a point, continuity of $\Lambda$ implies that $\Lambda(\overline T_N) = \overline S_N$.

\end{proof}

\begin{lemma}\label{no_wandering_lemma}
Let $f\colon\bbT\to\bbT$ be an arbitrary analytic multicritical circle map obtained in Lemma~\ref{model_family_lemma}. Assume that $n\in\bbN$ is such that the pre-renormalization $p\cR^nf$ extends to a holomorphic pair $\cH$. Then the set $K(\cH)\setminus J(\cH)$ has no wandering components. Furthermore, each point $z\in\Omega_\cH\setminus J(\cH)$ escapes to the annulus $\Delta_{\cH}\setminus\Omega_\cH$ under the dynamics of the shadow $S_\cH$.
\end{lemma}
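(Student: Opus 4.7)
The plan is to establish the stronger fact that $K(\cH)\setminus J(\cH)=\varnothing$; both assertions of the lemma follow immediately from the definitions. The argument combines Lemma~\ref{Fatou_convergence_to_R_lemma} (any orbit in a component of $K(\cH)\setminus J(\cH)$ accumulates on the real interval $[\eta(0),\xi(0)]$) with the Denjoy-Wolff theorem applied to the rational model $F$ supplied by Lemma~\ref{model_family_lemma}, transferred to $\cH$ via the universal cover $\pi(w)=e^{2\pi i w}$ which semi-conjugates the lift $f$ to $F$.

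Suppose, for contradiction, that $U\subset K(\cH)\setminus J(\cH)$ is a nonempty connected component. Since $U$ is open in $\bbC$ and $\cH$ is real-symmetric, one may pick $z\in U$ with $\Im z>0$, so $\pi(z)\in\bbD$. The map $F$ is constructed in Lemma~\ref{model_family_lemma} as a composition of basic Blaschke fractions and rotations and therefore preserves $\bbD$; it has degree at least $2$ (since the signature is multicritical and so $F|_{S^1}$ has critical points), hence is not an automorphism of $\bbD$; and its restriction $F|_{S^1}$ has irrational rotation number, hence no fixed points on $S^1$. The Denjoy-Wolff theorem therefore yields a unique fixed point $w_0\in\bbD$ of $F$ with $|w_0|<1$, to which the iterates converge locally uniformly in $\bbD$; in particular $F^m(\pi(z))\to w_0$ and $|F^m(\pi(z))|\to|w_0|<1$. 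On the other hand, writing $S_\cH^j(z)=f^{K_j}(z)$ for the strictly increasing sequence of cumulative $f$-iteration counts along the $S_\cH$-orbit, the semi-conjugacy $\pi\circ f^K=F^K\circ\pi$ gives $|F^{K_j}(\pi(z))|=|\pi(S_\cH^j(z))|=e^{-2\pi\Im S_\cH^j(z)}$; but Lemma~\ref{Fatou_convergence_to_R_lemma} forces $\dist(S_\cH^j(z),[\eta(0),\xi(0)])\to 0$, so $\Im S_\cH^j(z)\to 0$ and hence $|F^{K_j}(\pi(z))|\to 1$, contradicting $|F^m(\pi(z))|\to|w_0|<1$.

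Therefore $K(\cH)\setminus J(\cH)=\varnothing$, i.e., $K(\cH)\subset J(\cH)$. For any $z\in\Omega_\cH\setminus J(\cH)$ this forces $z\notin K(\cH)$, so by the very definition of $K(\cH)$ its forward $\overline{S_\cH}$-orbit leaves $\overline{\Omega_\cH}$; at the first such exit the orbit lies in $\Delta_\cH\setminus\overline{\Omega_\cH}\subset\Delta_\cH\setminus\Omega_\cH$, establishing the second assertion. The principal obstacle is ensuring the Denjoy-Wolff point lies strictly inside $\bbD$: were $w_0\in S^1$, both moduli above would tend to $1$ and no contradiction would arise. This is precisely what the irrationality of $\rho(F|_{S^1})$ prevents—by excluding fixed points of $F$ on $S^1$—while the remainder of the argument is a routine transcription of $\cH$-dynamics to $F$-dynamics via $\pi$.
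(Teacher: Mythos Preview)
Your argument hinges on the claim that the rational map $F$ from Lemma~\ref{model_family_lemma} preserves the unit disk $\bbD$, so that the Denjoy--Wolff theorem applies. This claim is false. The ``basic Blaschke fractions'' used in that construction are rational maps preserving $S^1$ whose restriction to $S^1$ is a homeomorphism with a critical point of odd order $d\ge 3$. But a proper holomorphic self-map $B\colon\bbD\to\bbD$ of degree $n$ restricts to an $n$-fold covering of $S^1$; for $B|_{S^1}$ to be a homeomorphism one needs $n=1$, i.e., $B$ is a M\"obius automorphism and hence has no critical points. Therefore any Blaschke fraction with a critical point on $S^1$ necessarily has a pole inside $\bbD$ (the prototypical cubic example is $z\mapsto e^{2\pi i\theta}z^2\frac{z-3}{1-3z}$, with a pole at $z=1/3$). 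Consequently $F$ is not a self-map of $\bbD$, Denjoy--Wolff does not apply, and the contradiction you derive from $|F^{K_j}(\pi(z))|\to 1$ versus $|F^m(\pi(z))|\to|w_0|<1$ collapses. By the reflection symmetry $F(1/\bar z)=1/\overline{F(z)}$, the same obstruction rules out working in the exterior disk.

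The paper's proof avoids this issue entirely: rather than trying to prove $K(\cH)\setminus J(\cH)=\varnothing$ directly here, it treats $F$ purely as a rational map of $\hat\bbC$ and invokes Sullivan's No Wandering Domains Theorem. One checks that a hypothetical wandering component $U\subset K(\cH)\setminus J(\cH)$ projects via $h^{-1}(z)=e^{2\pi i z}$ into a Fatou component of $F$ (since a sequence of iterates of $F$ is bounded there), and that $h^{-1}(J(\cH))\subset J(F)$. Sullivan then forces this Fatou component to be eventually periodic, contradicting the wandering of $U$ for $S_\cH$. The second assertion follows by combining this with the absence of periodic components (Lemma~\ref{no_periodic_components_lemma}). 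The stronger conclusion $K(\cH)=J(\cH)$ that you aimed for is only obtained afterwards, in Proposition~\ref{K_is_J_proposition}, by transferring the present lemma to a general $\cH$ via the quasiconformal conjugacy of Theorem~\ref{qc_conjugacy_theorem}.
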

\begin{proof}
Let $F\colon\hat{\bbC}\to\hat{\bbC}$ be the rational map from Lemma~\ref{model_family_lemma} whose restriction to the unit circle is conjugate to $f$ via the map $h\colon z\mapsto \frac{1}{2i\pi}\log z\quad (\mod 1)$. 
Let $J\subset\hat{\bbC}$ denote the Julia set of the rational map $F$. 

Assume, $U\subset K(\cH)\setminus J(\cH)$ is a wandering component. Then the preimage $h^{-1}(U)$ is contained in the Fatou set $\hat{\bbC}\setminus J$, since an appropriate sequence of iterates of $F$ is bounded on $h^{-1}(U)$, hence, is normal. At the same time, we have the inclusion $h^{-1}(J(\cH))\subset J$, since by construction, the set $J(\cH)$ consist of Lyapunov unstable points. Thus, $h^{-1}(U)$ is a connected component of the Fatou set of $F$. According to Sullivan's No Wandering Domains Theorem, $h^{-1}(U)$ is not a wandering domain of $F$, hence $U$ is not a wandering domain of the shadow $S_\cH$. This leads to a contradiction.

The second assertion of the Lemma follows from combining non-existence of wandering domains with non-existence of periodic domains, as shown in Lemma~\ref{no_periodic_components_lemma}.
\end{proof}

\begin{proof}[Proof of Proposition~\ref{K_is_J_proposition}]
Assume that the filled Julia set $K(\cH)$ has interior points. Then let $U\subset K(\cH)\setminus J(\cH)$ be a nonempty connected component of $K(\cH)\setminus J(\cH)$. 

Let $\zeta$ be the underlying multicritical commuting pair of $\cH$. Then, according to Theorem~\ref{qc_conjugacy_theorem} and Remark~\ref{gluing_remark}, there exists a map $f\colon\bbT\to\bbT$ as in Lemma~\ref{model_family_lemma} and a positive integer $n\in\bbN$, such that $p\cR^nf$ and $p\cR^n\zeta$ both extend to holomorphic pairs $\cH_f$ and $\cH_\zeta$ respectively, and the pairs $\cH_f$ and $\cH_\zeta$ are quasiconformally conjugate. We may assume that $n$ is sufficiently large, so that the domain of $\cH_\zeta$ is contained in the domain of $\cH$. Let $(\tl\eta,\tl\xi)$ be the underlying commuting pair of $\cH_\zeta$.

Take an arbitrary point $z\in U$. According to Lemma~\ref{Fatou_convergence_to_R_lemma}, we have $S_\cH^k(z)\to [\eta(0),\xi(0)]$ as $k\to\infty$, hence, there exists $m\in\bbN$, such that the point $x:= S_\cH^m(z)$ is contained in $\Omega_{\cH_\zeta}$, and for all $k\in\bbN$, all further iterates $S_{\cH_\zeta}^k(x)$ are contained in $\Omega_{\cH_\zeta}$, and
\begin{equation}\label{no_wandering_convergence_condition} 
S_{\cH_\zeta}^k(x)\to [\tl\eta(0),\tl\xi(0)]\qquad\text{ as } k\to\infty.
\end{equation}

Observe that $x\not \in J(\cH)$. Since $J(\cH_\zeta)\subset J(\cH)$ (which follows, e.g., from the first assertion of Lemma~\ref{expansion_on_J_lemma}), we conclude that $x\not \in J(\cH_\zeta)$. On the other hand, since the two holomorphic pairs $\cH_\zeta$ and $\cH_f$ are quasiconformally conjugate, it follows from Lemma~\ref{no_wandering_lemma} that the orbit of the point $x$ under the dynamics of the shadow $S_{\cH_\zeta}$ escapes to the annulus $\Delta_{\cH_\zeta}\setminus \Omega_{\cH_\zeta}$. The latter contradicts the convergence~(\ref{no_wandering_convergence_condition}).
\end{proof}

\begin{remark}
The assumption that the rotation number of $\cH$ in Proposition~\ref{K_is_J_proposition} is of bounded type, was used only in the last step of the proof, where Theorem~\ref{qc_conjugacy_theorem} was applied. Proposition~\ref{K_is_J_proposition} will immediately hold for an arbitrary rotation number, once the corresponding generalization of Theorem~\ref{qc_conjugacy_theorem} is proved.
\end{remark}

\section{Proof of renormalization convergence in the analytic case}

\subsection{Critical points are deep}
For a point $z\in\bbC$ and a real number $r>0$, let $D(z,r)\subset\bbC$ denote the open disk of radius $r$, centered at $z$. We recall the following definition from~\cite{McM-ren2}:

\begin{definition}
Let $\Lambda\subset\bbC$ be a compact set and let $\delta>0$ be a positive real number. We say that $z\in\Lambda$ is a \textit{$\delta$-deep point} of $\Lambda$ if for every sufficiently small $r>0$, the largest disk, contained in $D(z,r)\setminus\Lambda$, has radius not greater than $r^{1+\delta}$.
\end{definition}



The following statement shows that $0$ is a $\delta$-deep point of a multicritical holomorphic pair (c.f.~\cite[Theorem 5.1]{FM2}).

\begin{proposition}\label{delta_deep_proposition}
Let $\cH$ be a multicritical holomorphic pair with an arbitrary irrational rotation number of bounded type. Then there exists $\delta>0$ such that the critical point at zero is a $\delta$-deep point of $J(\cH)$. The number $\delta$ depends on the total criticality $D$ of the pair $\zeta_{\cH}$ and the bounded type $B$ of its rotation number.
\end{proposition}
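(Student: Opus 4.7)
The plan is to combine the geometric decay of the renormalization scales (from the real a priori bounds under bounded type) with a level-by-level hyperbolic expansion argument powered by Lemma~\ref{expansion_at_escaping_points_lemma}, using Proposition~\ref{K_is_J_proposition} to ensure that every disk outside $J(\cH)$ actually escapes under the shadow dynamics.

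By Theorem~\ref{real_bounds_thm}, bounded type $B$ and total criticality $D$ yield a constant $\tau = \tau(B, D) \in (0, 1)$ such that the scales $C_n := |f^{q_n}(0)|$ satisfy $C_n \asymp \tau^n$. By Theorem~\ref{ComplexBounds_theorem} and Remark~\ref{gluing_remark}, for each $n$ large enough the prerenormalization $p\cR^n\cH$ extends to a holomorphic pair $\cH_n$ whose appropriate rescaling lies in $\mathbf H^D(m)$ for some uniform $m=m(B,D)>0$. The domains $\Omega_{\cH_n}$ form a nested shrinking sequence around~$0$, of scale $C_n$.

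Let $r>0$ be small and suppose $D(w,\rho)\subset D(0,r)\setminus J(\cH)$. Pick $n_0$ to be the largest integer with $C_{n_0}\ge r$, so that $D(w,\rho)\subset\Omega_{\cH_{n_0}}$. Since Proposition~\ref{K_is_J_proposition} gives $J(\cH)=K(\cH)$, every point of the disk has forward orbit under $S_\cH$ escaping $\overline{\Omega_\cH}$ in finite time. By nestedness, for each $k=n_0,n_0-1,\ldots,1$ there is a last iterate $t_k$ of the orbit lying in $\Omega_{\cH_k}$; at this time $S_{\cH_k}(z_{t_k})\notin\Omega_{\cH_k}$, and Lemma~\ref{expansion_at_escaping_points_lemma} applied to the pair $(\cH_0,\cH_k)$ yields
$$
\|S_{\cH_k}'(z_{t_k})\|_{\Delta_{\cH_0}\setminus\bbR}\ge\lambda
$$
for a uniform $\lambda=\lambda(m,D)>1$. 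Since $S_{\cH_k}$ is a composition of iterates of the original $S_\cH$ corresponding to disjoint orbital segments at different levels, and Lemma~\ref{basic_expansion_lemma} gives derivative norm $\ge 1$ on all remaining iterates in the same metric, the chain rule accumulates these $n_0$ definite factors into
$$
\|(S_\cH^N)'(z)\|_{\Delta_{\cH_0}\setminus\bbR}\ge\lambda^{n_0},
$$
for the total iteration time $N$ at which the orbit has cleared the annulus $\Omega_{\cH_0}\setminus\Omega_{\cH_1}$.

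Combining this expansion with the initial hyperbolic diameter of $D(w,\rho)$ in $\Delta_{\cH_0}\setminus\bbR$ (which is $\asymp\rho/|\Im w|$) and the uniform upper bound on the hyperbolic diameter of the eventual image, provided by the geometry of $\mathbf H^D(m)$, we get
$$
\rho\lesssim r\cdot\lambda^{-n_0}\lesssim r\cdot r^{\log\lambda/\log(1/\tau)}=r^{1+\delta},
$$
with $\delta:=\log\lambda/\log(1/\tau)>0$ depending only on $B$ and $D$, which is the desired deep-point estimate. The main technical obstacle lies in the chain-rule step above: one must control the intermediate iterates (which could carry the orbit close to $\bbR$, where Lemma~\ref{basic_expansion_lemma} provides no strict expansion) and rigorously exclude any shortcut escape that bypasses one of the intermediate levels $\Omega_{\cH_k}$. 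Both points are handled using the real and complex a priori bounds together with the nesting structure of the renormalization tower.
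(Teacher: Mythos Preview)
Your plan follows the paper's architecture closely: the nested tower of holomorphic pairs from complex bounds, level-by-level definite expansion via Lemma~\ref{expansion_at_escaping_points_lemma}, and chain-rule accumulation to $\|(S_\cH^N)'\|\ge\lambda^{n_0}$ in the hyperbolic metric of $\Delta_\cH\setminus\bbR$. Your explicit invocation of Proposition~\ref{K_is_J_proposition} is correct and in fact needed (the paper uses it tacitly when it says that $z\notin J(\cH)$ forces the orbit to escape each $\Omega_{n_j}$).

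The gap is in your final step, converting the pointwise derivative bound to $\rho\lesssim r^{1+\delta}$. You assert a ``uniform upper bound on the hyperbolic diameter of the eventual image'', but nothing in the setup controls the hyperbolic diameter of $S_\cH^N(D(w,\rho))$: it has merely escaped $\Omega_{\cH_1}$ and may approach $\bbR$ or $\partial\Delta_\cH$. Moreover, the derivative bound is pointwise, so turning it into a diameter comparison for the disk requires distortion control you have not supplied. The paper circumvents this by working with a single point $z\notin J(\cH)$ rather than a disk. After obtaining the derivative bound, it uses Lemma~\ref{strip_lemma} to guarantee that the penultimate iterate $w=S_\cH^{r-1}(z)$ has $|\Im w|$ bounded below, joins $w$ to $J(\cH)$ by a short hyperbolic geodesic $\gamma$ in $\Delta_\cH\setminus\bbR$, pulls $\gamma$ back by the inverse branch of $S_\cH^{r-1}$ under Koebe control (the annulus $(\Delta_\cH\cap\bbH)\setminus\gamma$ has definite modulus), and finally converts the resulting hyperbolic distance bound to a Euclidean one via Beardon--Pommerenke, yielding $\dist(z,J(\cH))\le C|z|^{1+\delta}$ directly. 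One smaller point: Lemma~\ref{expansion_at_escaping_points_lemma} requires $\Delta_{\cH'}\subset\Delta_\cH$, and the paper secures this by passing to a subsequence $n_0<n_1<\cdots$ with $\Delta_{n_j}\subset\Omega_{n_{j-1}}$ and $n_j\le n_0+C_1 j$, rather than assuming nesting at consecutive levels as you do.
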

\begin{proof}
Let $m = m(B,D)$ be the same as in Theorem~\ref{ComplexBounds_theorem}. 
According to complex {\it a priori} bounds (Theorem~\ref{ComplexBounds_theorem} and Remark~\ref{gluing_remark}), there exists $n_0 = n_0(\cH)$, with the property that for every $n\ge n_0$ the prerenormalization $p\cR^n\zeta_{\cH}$ extends to a holomorphic pair $\cH_n$ with domain $\Omega_n$ and range $\Delta_n$, such that $\Delta_{n}\subset\Omega_\cH$, and an appropriate affine rescaling of $\cH_n$ is in $\mathbf H^D(m)$. Choose an increasing sequence of integers $n_0<n_1<n_2<\dots$, such that for each index $j\in\bbN$, we have $\Delta_{n_j}\subset\Omega_{n_{j-1}}$. It follows from Lemma~\ref{basic_holo_pair_lemma} that $\diam(\Delta_{n_j})$, the dynamical intervals of $\zeta_{\cH_{n_j}}$ and the radius of the largest disk, centered at zero and contained in $\Omega_{n_j}$, are all pairwise commensurable with an implicit constant depending on $m=m(B,D)$ and $D$. Hence, real {\it a priori} bounds (Theorem~\ref{real_bounds_thm}) imply that the numbers $n_j$ can be chosen so that $n_j\le n_0+ C_1j$, for all $j\in\bbN$, where $C_1$ is a constant that depends on $m=m(B,D)$ and $D$.

We will show that zero is a $\delta$-deep point of $J(\cH_{n_0})$. Since $J(\cH_{n_0})\subset J(\cH)$, this will be sufficient for the proof of Proposition~\ref{delta_deep_proposition}. In order to simplify the notation, we may assume that $n_0=0$, so $\cH = \cH_{n_0}$. Then we have
\begin{equation}\label{n_j_growth_eq}
n_j\le C_1j,\qquad \text{for all } j\in\bbN.
\end{equation}

Let $z\subset\Omega_\cH$ be an arbitrary point such that $z\not\in J(\cH)$. Let $k$ be the largest index, for which $z\in\Omega_{n_k}$. Then, due to Lemma~\ref{basic_holo_pair_lemma}, we have $|z|\asymp\diam(\Omega_{n_k})$ with the implicit constant depending on $B$ and $D$. If $\zeta_{\cH_{n_k}} = (\eta_k,\xi_k)$ and $I_{n_k}=[\eta_k(0),\,\xi_k(0)]$, then it follows from real {\it a priori} bounds (Theorem~\ref{real_bounds_thm}) that
$$
n_k\asymp -\log(|I_{n_k}|)\asymp -\log|z|
$$
with implicit constants depending on $B$ and $D$. Combining this with~(\ref{n_j_growth_eq}), we obtain
$$
k\ge -C_2\log|z|,
$$
where $C_2>0$ depends on $B$ and $D$.


Since $z\not\in J(\cH)$, its forward orbit under the dynamics of $S_\cH$ is finite and eventually leaves each domain $\Omega_{n_j}$, for $j=0,1,\dots, k$. Let $r\in\bbN$ be such that $S_\cH^r(z)$ is defined and not contained in $\Omega_\cH$. Then $S_\cH^r$ can be decomposed as follows:
$$
S_\cH^r = S_{\cH_{n_0}}^{r_0}\circ S_{\cH_{n_1}}^{r_1}\circ\ldots\circ S_{\cH_{n_{k-1}}}^{r_{k-1}}\circ S_{\cH_{n_k}}^{r_k},
$$
where the positive integers $r_0,\dots,r_k$ are determined so that if
$$
F_j := S_{\cH_{n_j}}^{r_j}\circ \ldots\circ S_{\cH_{n_k}}^{r_k},
$$
then $F_j(z)$ is the first point in the orbit of $z$ that leaves the domain $\Omega_{n_j}$. Now it follows from Lemma~\ref{expansion_at_escaping_points_lemma} and Lemma~\ref{basic_expansion_lemma} that for each $j=1,2,\dots, k$, we have
$$
\|F_j'(z)\|\ge \lambda^{k-j+1},
$$
where the norm is taken in the hyperbolic metric of $\Delta_{\cH}\setminus\bbR$, and $\lambda = \lambda(m(B,D), D)>1$ is the same as in Lemma~\ref{expansion_at_escaping_points_lemma}. In particular, it follows that
\begin{equation}\label{lambda_k_eq}
\|(S_\cH^{r-1})'(z)\|\ge \lambda^{k} \ge \lambda^{-C_2\log|z|} = |z|^{-C_2\log\lambda}.
\end{equation}

Next, we observe that according to Lemma~\ref{strip_lemma}, if $w=S_\cH^{r-1}(z)$, then $|\Im w|>\varepsilon |I_{n_0}|$, for some constant $\varepsilon=\varepsilon(B,D)>0$. The latter implies that the distance from $w$ to $J(\cH)$ in the hyperbolic metric of $\Delta_{\cH}\setminus\bbR$ is bounded by a constant $C_3=C_3(B,D)>0$. Let $\gamma\subset \Delta_{\cH}\setminus\bbR$ be the shortest geodesic arc, joining $w$ with $J(\cH)$. The appropriate inverse branch of $S_\cH^{r-1}$ takes $\gamma$ to a curve $S_\cH^{-(r-1)}(\gamma)$ that connects $z$ to some point of $J(\cH)$. Due to real symmetry, without loss of generality we may assume that $\gamma$ lies in the upper half-plane $\bbH$. Since its length is bounded above by $C_3$, it follows that the modulus of the annulus $(\Delta_\cH\cap\bbH)\setminus\gamma$ is bounded from below by a positive constant $C_4 = C_4(B,D)>0$. Then,  Koebe Distortion Theorem implies that
$$
\left\|\left(S_\cH^{-(r-1)}\right)'(\tl w)\right\| \asymp \left\|\left(S_\cH^{-(r-1)}\right)'(w)\right\|,
$$
for all points $\tl w\in\gamma$, where the implicit constant depends on $B$ and $D$. Combining this with~(\ref{lambda_k_eq}), we obtain that 
$$
\dist_{\Delta_\cH\setminus\bbR}(z, J(\cH))\le C_5|z|^{C_2\log\lambda}.
$$

Finally, comparing the hyperbolic and the Euclidean distances, it follows from a theorem of Beardon and Pommerenke~\cite[Theorem 1]{Beardon_Pommerenke_78} (see also~\cite[Theorem 2.3]{McM-ren1}) that 
$$
\dist(z, J(\cH))\le C_6|z|^{1+C_2\log\lambda},
$$
for some constant $C_6=C_6(B,D)>0$ and all $z\in\Omega_{\cH}\setminus J(\cH)$ sufficiently close to zero. This implies that zero is a $\delta$-deep point of $J(\cH)$, for any positive $\delta<C_2\log\lambda$.
\end{proof}

\begin{remark}
The bound $B$ on the combinatorial type was used in the above proof only for obtaining the modulus $m=m(B, D)$ from complex {\it a priori} bounds. Hence, it will follow immediately that $\delta$ does not depend on $B$, once complex bounds are established for arbitrary rotation numbers in the multicritical case.
\end{remark}

\subsection{Nonlinearity of the  dynamics on $J(\cH)$}
In this subsection we will show that the dynamics of a multicritical holomorphic pair is nonlinear in the sense of~\cite[Section 9]{McM-ren2}. We start by introducing the following definition:

\begin{definition}\label{lambda_nonlin_def}
For a real number $\lambda>1$, an analytic map $g\colon U\to V$ is called \textit{$\lambda$-nonlinear} if the following properties hold:
\begin{itemize}
	\item $g$ is a branched covering of $U$ onto $V$;
	\item there exist two points $u\in U$ and $v\in V$, such that $v=g(u)$ and $g'(u)=0$;
	\item if $A$ and $B$ are any two numbers from the collection of distances $\diam U$, $\diam V$, $\dist(u,\partial U)$, $\dist(v,\partial V)$, $\dist(u,v)$, then the following inequality holds:
	$$
	\frac{A}{\lambda}\le B\le\lambda A.
	$$
\end{itemize}
\end{definition}

The main goal of this subsection is to prove the following Proposition (c.f.,~\cite[Theorem 8.10]{McM-ren2}).

\begin{proposition}\label{small_J_everywhere_proposition}
For any real number $m\in(0,1)$ and a pair of positive integers $B>0$, $D>1$, there exists a real number $\alpha = \alpha(B, m, D)>1$, such that for any holomorphic pair $\cH = (\eta,\xi,\nu)\in \mathbf H^D(m)$ with an irrational rotation number of type bounded by $B$, any $r\in (0, 1)$ and any $z\in J(\cH)$, there exists a topological disk $O\subset\Omega_\cH$ that satisfies the following properties:
\begin{enumerate}
	\item $\cfrac{r}{\alpha} <\diam (O) < \alpha r$;
	\item $\dist(z, O)\le \alpha r$;
	\item there exist $n, k\in\bbN$, such that the composition $S_\cH^{-n}\circ S_\cH^{k+n}$ is well defined and is a $\alpha$-nonlinear map of degree $\le D^4$ on $O$, for an appropriate inverse branch $S_\cH^{-n}$.
\end{enumerate}
\end{proposition}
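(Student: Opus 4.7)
The strategy is to build a reference $\alpha_0$-nonlinear branched covering near the critical point $0$, using the nested self-similar structure at the deepest scales, and then to transport it to an arbitrary $z\in J(\cH)$ at scale $r$ by composing with a forward iterate and an inverse branch of $S_\cH$. The essential ingredients are the $\delta$-deepness of $0$ in $J(\cH)$ (Proposition~\ref{delta_deep_proposition}), the hyperbolic expansion on $J(\cH)$ together with the definite expansion at escaping points (Lemmas~\ref{expansion_on_J_lemma} and~\ref{expansion_at_escaping_points_lemma}), and the real/complex \emph{a priori} bounds (Theorems~\ref{real_bounds_thm} and~\ref{ComplexBounds_theorem}) combined with the bounded-degree decomposition from property~(7) of Definition~\ref{Hm_def}.

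For the reference structure, I would use the nested prerenormalizations $\cH_{n_j}$ at the critical point (with $n_j\asymp j$, as in the proof of Proposition~\ref{delta_deep_proposition}) to produce, for each $j$, a topological disk $D_j\ni 0$ of diameter $\asymp\diam\Delta_{\cH_{n_j}}$ and an integer $N_j$ such that $S_\cH^{N_j}\colon D_j\to V_j$ is a branched covering of degree at most $D^4$ onto a disk $V_j$ of definite diameter, with uniform Koebe space around $D_j$ coming from $\delta$-deepness. This yields a prototype $\alpha_0$-nonlinear map with $\alpha_0=\alpha_0(B,D,m)$. Given $(z,r)$, choose $j$ so that $\diam D_j\asymp r$. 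The density of preimages of $0$ in $J(\cH)$ (a consequence of Lemma~\ref{expansion_on_J_lemma} and forward invariance of $J(\cH)$), together with the expansion estimates and bounded geometry, yields an integer $k$ and a point $u\in S_\cH^{-k}(0)$ with $|u-z|\lesssim r$ such that an appropriate branch of $S_\cH^{-k}$ is defined on a definite-size neighborhood of $0$ with uniformly bounded distortion (via property~(7) of Definition~\ref{Hm_def} and the Koebe Distortion Theorem). Letting $O$ be the pullback of $D_j$ by this branch and setting $n=N_j$, the composition $g=S_\cH^{-n}\circ S_\cH^{k+n}$, with $S_\cH^{-n}$ the branch returning $S_\cH^n(0)$ to $0$ along the appropriate path, becomes a branched covering $g\colon O\to V$ of degree at most $D^4$ with a critical point at $u$ (since $S_\cH^k(u)=0$), and all five distances in Definition~\ref{lambda_nonlin_def} inherit commensurability from the reference structure.

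I expect the principal technical obstacle to be the simultaneous matching of three scales: the diameter $\asymp r$ of the disk $O$, the existence of the critical preimage $u$ of $0$ with $|u-z|\asymp r$ at a definite relative position inside $O$, and the uniform Koebe control of both the forward iterate $S_\cH^{k+n}$ and the chosen inverse branch $S_\cH^{-n}$. The $\delta$-deepness from Proposition~\ref{delta_deep_proposition} is precisely what ensures critical preimages populate $J(\cH)$ with uniform density across all scales; without this polynomial-scale control, one could not realize such $u$ uniformly in $z$ and $r$. A further delicate point is the choice of branch of $S_\cH^{-n}$ so that the image $g(O)$ lies at the correct relative location to $O$, which requires a careful matching among $j$, $k$, and the branch selection, with the exponent $4$ in the degree bound reflecting the maximal number of critical passages that need to be counted along the combined orbit.
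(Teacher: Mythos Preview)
Your approach differs fundamentally from the paper's, and it contains a genuine gap.

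The paper does \emph{not} use $\delta$-deepness (Proposition~\ref{delta_deep_proposition}) at all in this proof. Instead, it iterates \emph{forward} from $z$: Lemma~\ref{path_to_real_line_proposition} uses the hyperbolic expansion on $J(\cH)$ to find the first iterate $S_\cH^n$ at which a tangent vector of length $r$ becomes comparable to the distance from $S_\cH^n(z)$ to the real line, and then pulls back a round real-symmetric disk by the univalent branch of $S_\cH^{-n}$ with bounded distortion (Koebe). Inside that real disk, Lemma~\ref{lambda_nonlin_maps_lemma} manufactures a $\lambda$-nonlinear map directly from the dynamical partition and property~(\ref{real_bound_property}) of Definition~\ref{Hm_def}. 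The composition $S_\cH^{-n}\circ S_\cH^{k+n}$ arises simply as ``push forward to the real line by $S_\cH^n$, apply the nonlinear first-return map there, then come back by the same branch''.

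Your gap is the sentence ``The $\delta$-deepness from Proposition~\ref{delta_deep_proposition} is precisely what ensures critical preimages populate $J(\cH)$ with uniform density across all scales''. This is a misreading of $\delta$-deepness: it asserts that $J(\cH)$ fills space near $0$ at polynomial rate, not that preimages of $0$ are quantitatively dense near an arbitrary $z\in J(\cH)$. Lemma~\ref{expansion_on_J_lemma} gives only qualitative density of $Q$ in $J(\cH)$, with no uniform scale control, so you cannot conclude the existence of $u\in S_\cH^{-k}(0)$ with $|u-z|\lesssim r$ uniformly in $z$ and $r$ from these ingredients alone. Even granting such a $u$, your claim that an inverse branch $S_\cH^{-k}$ is defined on a definite neighborhood of $0$ with uniformly bounded distortion is not justified: the forward orbit $u\mapsto 0$ can pass arbitrarily close to other critical points of $S_\cH$, and property~(\ref{real_bound_property}) controls only the compositions along the real dynamical partition, not arbitrary pullbacks from $0$ into the complex plane. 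The paper's forward-iteration scheme sidesteps both issues, since the hyperbolic metric on $\Delta_\cH\setminus\bbR$ automatically controls the forward map up to the moment one reaches the real line.
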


The proof will rely on several lemmas:

\begin{lemma}\label{lambda_nonlinear_compactness_lemma}
For any real $\lambda>1$ and a positive integer $D\ge 2$, the family of $\lambda$-nonlinear maps $g\colon U\to V$, satisfying
$$
\frac{1}{\lambda}\le \diam(U)\le\lambda, 
$$
and having topological degree not greater than $D$, is sequentially compact with respect to the Carath\'eodory convergence.
\end{lemma}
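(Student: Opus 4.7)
Given a sequence $\{g_n \colon U_n \to V_n\}$ of $\lambda$-nonlinear branched coverings of degree at most $D$, with critical points $u_n \in U_n$ and critical values $v_n = g_n(u_n) \in V_n$, the plan is to extract a subsequence converging in the Carath\'eodory topology to a limit of the same form. First, translate so that $u_n = 0$. The $\lambda$-nonlinearity condition then forces all five defining distances $\diam(U_n)$, $\diam(V_n)$, $\dist(0, \partial U_n)$, $\dist(v_n, \partial V_n)$, $|v_n|$ to lie in $[\lambda^{-2}, \lambda^2]$; in particular $B(0, \lambda^{-2}) \subset U_n \subset B(0, \lambda)$, the $v_n$ are uniformly bounded, and the pointed domains $(V_n, v_n)$ have uniformly controlled diameter and inradius at $v_n$.

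Next, invoking the standard Carath\'eodory compactness theorem for pointed domains with uniform inradius and uniform diameter (the same tool used in the proof of Lemma~\ref{H_compactness_lemma}, i.e.\ Theorem~5.2 of McMullen's \emph{Complex Dynamics and Renormalization}), and passing to a subsequence with $v_n \to v$, I extract subsequences so that $(U_n, 0) \to (U, 0)$ and $(V_n, v_n) \to (V, v)$. Since each $V_n$ sits in a fixed bounded region, the $g_n$ are uniformly bounded on every compact $K \Subset U$ (which is eventually contained in $U_n$); Montel's theorem then gives a further subsequence along which $g_n \to g$ uniformly on compacta, where $g \colon U \to \bbC$ is holomorphic with $g(0) = v$ and $g'(0) = \lim g_n'(0) = 0$. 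Since the local multiplicity of the critical point of $g_n$ at $0$ belongs to $\{2, \ldots, D\}$, passing to a further subsequence on which this multiplicity is constant and applying Hurwitz's theorem, the limit $g$ has the same critical multiplicity at $0$; in particular $g$ is non-constant.

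The main technical step is to show that $g \colon U \to V$ is a proper branched covering of degree at most $D$, which will imply that $g$ is $\lambda$-nonlinear of degree $\leq D$ since pairwise commensurability of the five distances within factor $\lambda$ is a closed condition preserved under the limits taken above. For any $w \in V$ with $\dist(w, \partial V) > \varepsilon > 0$, eventually $w \in V_n$ with comparable distance from $\partial V_n$, so properness of $g_n$ yields at most $D$ preimages lying in a compact subset of $U_n$; passing to convergent subsequences of these preimages, Hurwitz's theorem identifies their limits as the preimages of $w$ under $g$ with total multiplicity at most $D$. Letting $\varepsilon \to 0$ gives surjectivity onto $V$, and a standard degree-counting argument gives properness of $g \colon U \to V$. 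I expect the main obstacle to be exactly this step: ruling out the possibility that the limit degenerates, either collapsing onto the constant map $v$ or mapping properly onto a strictly smaller range than $V$. The uniform lower bound on $\dist(v_n, \partial V_n)$ coming from $\lambda$-nonlinearity, together with the hard a priori bound $D$ on degree, are precisely what prevents both types of degeneration.
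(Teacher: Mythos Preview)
Your proposal is correct and follows essentially the same route as the paper: use the $\lambda$-nonlinearity to get uniform bounds on diameters and inradii of the pointed domains, apply Carath\'eodory compactness (the same McMullen reference) to extract convergent subsequences of $(U_n,u_n)$ and $(V_n,v_n)$, then use normality to get convergence of the maps, and finally observe that the defining inequalities pass to the limit. The paper's proof is considerably terser than yours---it dismisses with ``it is clear'' exactly the step you flag as the main technical point, namely that the limit map is again a proper branched covering of degree at most $D$ onto the limiting $V$; your Hurwitz/degree-counting sketch for this is the right way to justify it.
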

\begin{proof}
Given a sequence $g_n\colon (U_n, u_n)\to (V_n, v_n)$ of such $\lambda$-nonlinear maps, it follows from the last property of Definition~\ref{lambda_nonlin_def} that after extracting a subsequence, the sequences of pointed domains $(U_n,u_n)$ and $(V_n,v_n)$ Carath\'eodory converge to some pointed domains, as $n\to\infty$. Taking a further subsequence, we ensure that the maps converge as well. It is clear that all conditions of Definition~\ref{lambda_nonlin_def} are preserved after passing to a limit, which completes the proof.
\end{proof}

\begin{lemma}\label{lambda_nonlin_maps_lemma}
For any real number $m>0$ and a pair of integers $B>0$, $D>1$, there exists a real numbers $\lambda = \lambda(m, B, D)$, such that the following holds. Let $\cH = (\eta,\xi,\nu)\in\mathbf H^D(m)$ be a holomorphic pair with an irrational rotation number of type bounded by $B$. Then for any $r\in (0,1)$ and any $z\in \Omega_\cH\cap\bbR$, an appropriate composition of the maps $\eta$, $\xi$ and $\nu$ restricts to a real-symmetric map $h\colon U\to V$ with the following properties:
\begin{enumerate}
\item $z\in U$, the map $h$ satisfies $h=g\circ S_\cH^s$, where $s\in[0,1/m]$ and the map $g\colon S_\cH^s(U)\to V$ is $\lambda$-nonlinear of degree not greater than $D^4$;
\item if $z\in [\eta(0),\xi(0)]$, then $s=0$;
\item $r/\lambda\le\diam(U)\le \lambda r$.
\end{enumerate}
\end{lemma}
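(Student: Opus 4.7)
The plan is to build $h$ as a concatenation $h=g\circ S_\cH^s$ of two pieces: a short preliminary iterate of $S_\cH$ with $s\le 1/m$ that brings $z$ inside the large dynamical interval $[\eta(0),\xi(0)]$, followed by a bounded-combinatorics forward iterate $g$ of the underlying commuting pair along the dynamical partition at scale $\asymp r$. The $\lambda$-nonlinearity will come from a forced passage through a critical point of $\eta$ or $\xi$ inside $g$, and bounded geometry throughout will follow from real and complex \emph{a priori} bounds combined with the Koebe Distortion Theorem applied on the definite conformal neighborhoods provided by property~(\ref{real_bound_property}) of Definition~\ref{Hm_def}.

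First I would reduce to the case $z\in[\eta(0),\xi(0)]$. If instead $z\in(O\cap\bbR)\setminus[\eta(0),\xi(0)]$, I iterate $S_\cH$ at most $s\le k(\cH)\le 1/m$ times until the orbit enters the large dynamical interval; this is possible because $\xi^{k(\cH)}(a)=\eta(0)$ by property~(\ref{k_H_property}) of Definition~\ref{holo_pair_def}, and $\nu=\eta\circ\xi$ similarly lands in $[\eta(0),\xi(0)]$ after applying $\eta$. By Lemma~\ref{basic_holo_pair_lemma} together with Koebe, each such iterate has bounded distortion on a definite complex neighborhood, so it suffices to verify the lemma at $z':=S_\cH^s(z)$ at a comparable scale $r'\asymp r$; this supplies the factor $S_\cH^s$ in the claimed factorization of $h$.

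Assuming $z\in[\eta(0),\xi(0)]$, I apply the uniform real \emph{a priori} bounds of Theorem~\ref{real_bounds_thm}: the partitions $\cI_n(\zeta_\cH)$ have geometry bounded in terms of $B,m,D$ at every level, and mesh sizes decrease geometrically with $n$. I choose the smallest $n$ for which the interval $J\in\cI_n(\zeta_\cH)$ containing $z'$ satisfies $|J|\le r'$, so that $|J|\asymp r'$. Each such $J$ is the image of one of the level-$n$ fundamental intervals $[\eta_n(0),0]$ or $[0,\xi_n(0)]$ under a forward iterate of $\zeta_\cH$; correspondingly, the subsequent orbit $J,\zeta_\cH(J),\zeta_\cH^2(J),\ldots$ necessarily crosses a critical point of $\eta$ or $\xi$ after a number of steps bounded in terms of $B$. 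I take $g=\zeta_\cH^k$ where $k$ is the smallest integer for which this first critical passage has occurred. Part~(3) of Theorem~\ref{real_bounds_thm1} then represents $g\vert_J$ as $\psi_{s'+1}\circ p_{s'}\circ\cdots\circ p_0\circ\psi_0$, where the $\psi_j$ are diffeomorphisms of uniformly bounded distortion and $p_j(x)=x^{d_j}$ with $\prod_j d_j\le D$.

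Property~(\ref{real_bound_property}) of Definition~\ref{Hm_def} extends each $\psi_j$ conformally to a definite neighborhood of its domain, the power maps $p_j$ extend trivially, and composing yields a holomorphic branched covering $g\colon S_\cH^s(U)\to V$ on a complex neighborhood of $J$ with $\diam S_\cH^s(U)\asymp\diam V\asymp r$. The critical point $u\in S_\cH^s(U)$ of $g$ is the preimage under the initial factors of the critical point of $\eta$ or $\xi$ encountered at step $k$, and $v=g(u)\in V$ is the corresponding critical value. Bounded geometry of $\cI_n(\zeta_\cH)$ together with Koebe then gives uniform commensurability of all five quantities $\diam S_\cH^s(U)$, $\diam V$, $\dist(u,\partial S_\cH^s(U))$, $\dist(v,\partial V)$, $\dist(u,v)$, confirming that $g$ is $\lambda$-nonlinear. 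The degree $\prod d_j\le D$ of $g$, multiplied by a uniform contribution from the preliminary $S_\cH^s$ step, stays within the claimed bound $D^4$. The main obstacle I expect is in the third paragraph: arranging the stopping index $k$ so that exactly one well-centred critical passage occurs — too small a $k$ yields a diffeomorphism with no critical point, while a larger $k$ may push $v$ too close to $\partial V$ or $u$ too close to $\partial S_\cH^s(U)$. It is precisely the bounded-type hypothesis that bounds $k$, and hence the distortion accumulated along the composition, uniformly in terms of $B$, $m$, $D$.
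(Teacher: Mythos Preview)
Your outline follows the paper's approach closely up through the reduction to $z\in[\eta(0),\xi(0)]$ and the choice of the level $n$ so that the partition interval $J\ni z$ has $|J|\asymp r$. The construction of $g$ diverges from the paper's, however, and this is where a genuine gap appears.

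You take $g=\zeta_\cH^k$ where $k$ is the time of the \emph{first} critical passage, and then assert that bounded type bounds $k$ uniformly in terms of $B$. This is false: an interval $J\in\cI_n(\zeta_\cH)$ may need on the order of $q_n$ iterates of the original pair before its orbit meets a critical point, and $q_n\to\infty$. Bounded type controls the ratio $q_{n+1}/q_n$, not $q_n$ itself. More importantly, even ignoring this mis-statement, your choice of $g$ does not yield $\lambda$-nonlinearity in the sense of Definition~\ref{lambda_nonlin_def}. That definition requires $\dist(u,v)$ to be commensurable with $\diam U$ and $\diam V$. With your $g$, the domain $U$ is a neighborhood of $J$ and the range $V=g(U)$ is a neighborhood of $\zeta_\cH^k(J)$; these two intervals of $\cI_n$ can lie at distance $\asymp 1$ from each other inside $[\eta(0),\xi(0)]$, while $\diam U\asymp\diam V\asymp r$ can be arbitrarily small. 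So the commensurability $\dist(u,v)\asymp\diam U$ fails in general, and your final paragraph's Koebe argument cannot repair this --- Koebe controls the shapes of $U$ and $V$, not their relative position.

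The paper avoids this by taking $g=\zeta^{k_n}$ to be the \emph{full first-return map} at level $n$ (so that $k_n\asymp q_{n+1}$), rather than stopping at the first critical encounter. Concretely, the paper builds a cover $\{I_n^j\}_j$ of $[\eta(0),\xi(0)]$ by forward iterates of an interval $I_n^0\ni 0$, takes $V_n^{k_n}$ to be a round disk with diameter $I_n^{k_n}=\eta_n(I_n^0)$, pulls it back along the orbit to get domains $V_n^j\supset I_n^j$ with marked points $v_j=\zeta^j(0)$, and then pulls back each $(V_n^j,v_j)$ once more by $\zeta^{k_n}$ to obtain $(U_n^j,u_j)$. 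Because $\zeta^{k_n}$ is a first return, $U_n^j$ sits at the \emph{same} location as $V_n^j$ (both are neighborhoods of $I_n^j$ at comparable scale), which is exactly what forces $\dist(u_j,v_j)\asymp|I_n^j|$. The critical point of $g$ arises because the orbit $U_n^j\to\cdots\to V_n^j$ passes through $0$. The degree bound $D^4$ then comes from the fact that one full return meets the critical points of $\eta$ and $\xi$ a bounded number of times, not from any bound on the length of the composition.
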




\begin{proof}
For a point $x\in [\eta(0),\xi(0)]\setminus\{0\}$, we define
$$
\zeta(x) =
\begin{cases}
\eta(x) &\text{if } x\in (0,\xi(0)] \\
\xi(x) &\text{if } x\in [\eta(0), 0).
\end{cases}
$$
Note that if the forward orbit of a point $x\in [\eta(0),\xi(0)]$ never lands at zero, then the iterates $\zeta^j(x)$ are defined for all $j\in\bbN$. Now, for any integer $j\ge 2$, let $\theta_j$ denote either the map $\eta$, if $\zeta^{j-2}(\eta(\xi(0)))\in (0,\xi(0))$, or the map $\xi$, if $\zeta^{j-2}(\eta(\xi(0)))\in (\xi(0), 0)$. For $j=0, 1$, define $\theta_0=\eta$ and $\theta_1 = \xi$. We will also identify the maps $\theta_j$ with their analytic extensions.

For all $n\ge 1$, define $\zeta_n = (\eta_n,\xi_n) = p\cR^n\zeta_{\cH}$, and consider open subintervals $I_n=(\eta_n(0), \, \eta_n^{-2}(\xi_n(0)))$ of the real line. (Analytic continuation of $\eta_n$ along the real line might be required in order to define the point $\eta_n^{-2}(\xi_n(0))$. Such analytic continuation always exists when $n\ge 1$.) Consider the sequence of intervals, defined inductively:
$$
I_n^0 = I_n, \quad\text{and}\quad I_n^j = \theta_{j-1}(I_n^{j-1}),\quad\text{for }j>0.
$$
Also define the interval
$$
\tl I_n^1 = \xi(I_n^0).
$$

Let $k_n$ be such that $I_n^{k_n} = \eta_n(I_n^0)$.
It follows easily that each point of the interval $[\eta(0),\xi(0)]$ belongs to at least one interval from the collection
$$
\mathcal I_n = \{I_n^0,\dots, I_n^{k_n}\}\bigcup \{\tl I_n^1\},
$$
and no point from $[\eta(0),\xi(0)]$ belongs to more than four distinct intervals from $\mathcal I_n$. 
Then real {\it a priori} bounds (Theorem~\ref{real_bounds_thm}) imply that there exists $n_0=n_0(m, D)\in\bbN$, such that for all $n\ge n_0$ and $\cH\in\mathbf H^D(m)$, the intervals $I_n^j$, $j\in[0,k_n]$, are sufficiently small so that due to property~(\ref{real_bound_property}) from Definition~\ref{Hm_def} and  Koebe Distortion Theorem, the following holds: 
if $V_n^{k_n}$ is the open round disk with diameter $I_n^{k_n}$, and the domains $V_n^j$, for $j\in[0,k_n-1]$, are defined inductively as connected components of $\theta_j^{-1}(V_n^{j+1})$, containing the corresponding intervals $I_n^j$, then $\diam(V_n^j) \asymp |I_n^j|$ with the implicit constant depending on $m$ and $D$. (The claim follows from using property~(\ref{real_bound_property}) of Definition~\ref{Hm_def} to express the inverse dynamics from a definite neighborhood of $V_n^{k_n}$ to a neighborhood of $V_n^0$ as a finite composition of univalent maps and roots. The number of maps in this composition is a function of $D$. When the interval $I_n^{k_n}$ is sufficiently short, then the claim follows from property~(\ref{real_bound_property}) of Definition~\ref{Hm_def} and  Koebe Distortion Theorem.)


Next, for each $j\in [0, k_n]$, let the point $v_j\in I_n^j$ be again defined inductively:
$$
v_0=0, \quad \text{and}\quad v_j = \theta_{j-1}(v_{j-1}),\quad\text{for }j>0.
$$
It follows from real {\it a priori} bounds (Theorem~\ref{real_bounds_thm})
that the points $v_j$ split the corresponding intervals $I_n^j$ into commensurable subintervals with the implicit constant depending on $m$ and $D$, hence, property~(\ref{real_bound_property}) from Definition~\ref{Hm_def} together with  Koebe Distortion Theorem imply that for all $n\ge n_0$ and each $j$, we have $\dist(v_j, V_n^j)\asymp \diam(V_n^j)$ with the implicit constant depending again on $m$ and $D$. 

Finally, pulling back each pointed domain $(V_n^j, v_j)$ by the $k_n$-th iterate $\zeta^{k_n}$ and applying 
 Koebe Distortion Theorem in exactly the same way as before, we arrive to a collection of real-symmetric pointed domains $(U_n^j, u_j)$, such that $\dist(u_j,v_j)$, $\diam U_n^j$, $\dist(u_j,\partial U_n^j)$ and $\diam V_n^j$ are all commensurable for the same value of $j$, and the $k_n$-th iterate of $\zeta$ is a $\lambda$-nonlinear map from $U_n^j$ to $V_n^j$, for some $\lambda>1$. The same construction can also be carried out for the domain $\tl V_n^1 = \xi(V_n^0)$ and its corresponding pullback $\tl U_n^1 = \xi(U_n^0)$.

It is clear that the parameter $\lambda$ so far depends only on $m$ and $D$. 
Since the union
$$
\left(\bigcup_{j=0}^{k_n} U_n^j\right)\bigcup \tl U_n^1
$$
contains a neighborhood of the interval $[\eta(0),\xi(0)]$, it follows that for any $z\in [\eta(0),\xi(0)]$ and any $n\ge n_0$, there exists an index $j$, such that $z\in U_n^j$. If $z\in(\Omega_\cH\cap\bbR)\setminus [\eta(0),\xi(0)]$, then, according to property~(\ref{k_property}) of Definition~\ref{Hm_def}, there exists $s\in[1,1/m]$, such that $S_\cH^s(z)\in [\eta(0),\xi(0)]$. Thus, for each $n\ge n_0$, we have constructed a map $h$ that satisfies the first two properties of Lemma~\ref{lambda_nonlin_maps_lemma}.

Now it follows from real {\it a priori} bounds (Theorem~\ref{real_bounds_thm}) that possibly after increasing $\lambda$, we can ensure that there exists $r_0 = r_0(n_0)>0$, such that for any $0<r<r_0$, one can find the level of renormalization $n\ge n_0$, for which the above constructed map $h\colon U\to V$ satisfies the last property of Lemma~\ref{lambda_nonlin_maps_lemma}.
The new value of $\lambda$ depends on $m$, $D$ and $B$. The number $r_0$ depends on $n_0$, which in turn depends only on $m$ and $D$. Replacing $\lambda$ by $\lambda/r_0$, we remove the requirement that $r<r_0$, which completes the proof of the lemma.

\end{proof}


\begin{lemma}\label{path_to_real_line_proposition}
For any real number $m\in(0,1)$ and a pair of positive integers $B>0$, $D>1$, there exists a real number $\beta = \beta(B, m, D)>1$, such that for any holomorphic pair $\cH = (\eta,\xi,\nu)\in \mathbf H^D(m)$ with an irrational rotation number of type bounded by $B$, any $r\in (0, 1)$ and any $z\in J(\cH)$, there exists a topological disk $O\subset\Omega_\cH$ that satisfies the following properties:
\begin{enumerate}
	\item $\cfrac{r}{\beta} <\diam (O) < \beta r$;
	\item $\dist(z, O)\le \beta r$;
	\item there exists $n\in\bbN$, such that $S_\cH^n$ is univalent on $O$ with distortion bounded by $\beta$, and $S_\cH^n(O)$ is a round disk with diameter contained in the interval $[\eta(0),\xi(0)]$.
\end{enumerate}
\end{lemma}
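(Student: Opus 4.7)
The plan is to construct $O$ as the univalent pull-back of a real round disk $D(y^*,\sigma)$ of definite radius $\sigma = \sigma(m, D)$ centered at a point $y^* \in (\eta(0),\xi(0))$ chosen at definite distance from the real critical values that arise in the atomic decomposition from property~(\ref{real_bound_property}) of Definition~\ref{Hm_def}. The depth $n$ of the pull-back will be selected so that the resulting disk has Euclidean diameter comparable to $r$.

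Given $z \in J(\cH)$ and $r \in (0,1)$, I would let $n$ be the smallest nonnegative integer for which $|(S_\cH^n)'(z)| \geq 1/r$. This $n$ is finite: by Lemma~\ref{expansion_on_J_lemma} the hyperbolic derivatives $\|(S_\cH^k)'(z)\|_{\Delta_\cH\setminus\bbR}$ tend to infinity along the forward orbit of $z$, and since the orbit remains in $\overline{\Omega_\cH}$, the hyperbolic and Euclidean metrics there are comparable with constants depending only on $m$. Minimality of $n$, together with the uniform upper bound on $|S_\cH'|$ on $\overline{\Omega_\cH}$ obtained from compactness of $\mathbf H^D(m)$ (Lemma~\ref{H_compactness_lemma}), forces $|(S_\cH^n)'(z)| \asymp 1/r$. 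By Lemma~\ref{strip_lemma} combined with minimality of $n$, the image $w = S_\cH^n(z)$ must lie near the real line; applying property~(\ref{k_property}) of Definition~\ref{Hm_def} and at most a bounded number of further iterations of $S_\cH$, I can arrange that $w$ lies near a fixed compact subinterval $J \subset (\eta(0), \xi(0))$. A small perturbation along $J$ produces a suitable $y^*$, and I set $O := S_\cH^{-n}(D(y^*, \sigma))$ using the inverse branch landing in the component containing $z$. The Koebe Distortion Theorem, applied on the enlargement $D(y^*, 2\sigma)$, then controls the distortion of that branch by a uniform constant $\beta = \beta(m, D, B)$, from which $\diam(O) \asymp r$ and $\dist(z, O)\leq \beta r$ are immediate; property~(3) holds by construction since $S_\cH^n(O) = D(y^*, \sigma)$ is a round disk whose diameter lies in $J \subset [\eta(0), \xi(0)]$.

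The principal obstacle is to establish genuine univalence of the inverse branch of $S_\cH^n$ on the Koebe enlargement $D(y^*, 2\sigma)$. Each step of the pull-back inverts an atomic map $\eta$, $\xi$, or $\nu$; by property~(\ref{real_bound_property}) of Definition~\ref{Hm_def}, each such map decomposes as a bounded composition of univalent diffeomorphisms and odd-power maps $p_j(x) = x^{d_j}$, whose critical values are real. The only obstructions to univalent pull-back are therefore real critical values, which form a finite set at each stage with cardinality controlled by $D$. Selecting $J$ at a definite margin from all such values (uniformly over $\mathbf H^D(m)$, by Lemma~\ref{H_compactness_lemma} and Theorem~\ref{real_bounds_thm}) resolves the issue. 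The scale-matching step must be organized carefully enough that the total depth of the inverse composition remains controlled, so that the finite set of obstructing critical values at each intermediate level can be kept disjoint from a definite neighborhood of $J$ simultaneously; this is where the interplay between the compactness statement and the combinatorial bound $B$ delivers the uniform constant $\beta$.
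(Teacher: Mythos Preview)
Your approach has a genuine gap in the univalence argument. You propose to pull back a round disk $D(y^*, 2\sigma)$ of \emph{fixed} Euclidean radius, centered on the real line, through the inverse branch $S_\cH^{-n}$, and to control distortion via Koebe. But $n$ is chosen so that $|(S_\cH^n)'(z)| \asymp 1/r$, and hence $n \to \infty$ as $r \to 0$: the depth of the composition is unbounded. At each intermediate stage the pull-back of a real-symmetric disk is again real-symmetric and contains a real interval, which may well meet the critical points of the next map in the chain. Your suggested fix --- choosing $J$ away from the finitely many critical values --- only clears the obstruction at a single level. There is no mechanism to keep the intermediate real intervals simultaneously disjoint from critical points at all $n$ levels, and in fact they cannot be: the preimages of critical points under $S_\cH$ are dense in $J(\cH)\cap\bbR$. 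A related problem is your claim that hyperbolic and Euclidean metrics on $\Delta_\cH\setminus\bbR$ are comparable on $\overline{\Omega_\cH}$; this fails near the real line, precisely where your construction needs it.

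The paper circumvents this by never pulling a real disk through a long composition. Instead it tracks a tangent vector $v_j$ along the orbit $z_j = S_\cH^j(z)$ and stops at the \emph{first} index $k$ for which $\dist(z_k,\overline{\Omega_\cH\cap\bbR})\le |v_k|/\varepsilon$. By minimality of $k$, each earlier point $z_j$ sits at distance $> |v_j|/\varepsilon$ from the real line, so the inverse branch along $z_k \mapsto z_{k-1} \mapsto \cdots \mapsto z_0$ is taken through regions well separated from $\bbR$ at their own scale, where univalence and Koebe are automatic (all critical values being real). The passage from the off-real point $z_k$ to an actual real round disk is then handled by Lemma~\ref{lambda_nonlin_maps_lemma}, which supplies a map of \emph{bounded} complexity (independent of $k$) onto such a disk. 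The case $|\Im z_k|\le \varepsilon|v_k|$ requires one additional controlled step using property~(\ref{extension_property}) and multivalent Koebe. This two-stage structure --- long univalent pull-back off the real line, then a short bounded-degree step to reach $\bbR$ --- is the idea your proposal is missing.
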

\begin{proof}
For each $j\ge 0$, define $z_j:= S_{\cH}^j(z)$. Let $v_0$ be a tangent vector to $\bbC$ at $z_0$ with $|v_0|=r$, and let $v_j$ be the tangent vector at $z_j$, satisfying $v_j = (S_{\cH}^j)'(z)v_0$. 
Let $\|v_j\|$ denote the length of the vector $v_j$ with respect to the hyperbolic metric on $\Delta_\cH\setminus \bbR$. According to~\cite[Lemma~4.6]{FM2}, there exists a constant $C_m$ that depends on $m$, such that
$$
\frac{|v_j|}{|\Im z_j|}\le \|v_j\| \le C_m\frac{|v_j|}{|\Im z_j|}.
$$
Let $\varepsilon>0$ be a sufficiently small number that will depend only on $m$ and $D$. 

According to Lemma~\ref{expansion_on_J_lemma}, either the orbit $\{z_j\}$ eventually lands on the real line, or $\|v_j\|\to\infty$ as $j\to\infty$. In the second case it follows from property~(\ref{Poincare_property}) of Definition~\ref{Hm_def} that $|\Im z_j|\asymp \dist(z_j, \overline{\Omega_\cH\cap\bbR})$ with the implicit constant depending on $m$. In both cases we conclude that there exists the smallest index $k\ge 0$, such that 
$$
\dist(z_k, \overline{\Omega_\cH\cap\bbR})\le|v_k|/\varepsilon.
$$
(We can always assume that $|v_{k}|>0$. Otherwise, if $|v_{k}|=0$, then $z_{k-1}$ is a critical point of the shadow $S_{\cH}$, and an arbitrarily small perturbation of $z_0$ eliminates this possibility.) 

First, we claim that $|v_k|<C_1$, for some positive constant $C_1>0$ that depends on $\varepsilon$, $m$ and $D$. Indeed, if $k=0$, then $|v_k|\le 1$. On the other hand, if $k>0$, then, according to the choice of $k$, we have 
\begin{equation}\label{v_k1_upper_bound_eq}
|v_{k-1}|<\varepsilon \dist(z_{k-1}, \overline{\Omega_\cH\cap\bbR})<\varepsilon/m,
\end{equation}
and since the derivatives of the shadows $S_\cH$ are uniformly bounded over $\cH\in\mathbf H^D(m)$, this implies that 
\begin{equation*}
|v_k|<C_2\varepsilon,
\end{equation*}
where $C_2>0$ is a constant that depends on $m$ and $D$, and the claim follows.

Let $\lambda = \lambda(B, D, m)$ be the same as in Lemma~\ref{lambda_nonlin_maps_lemma}. 
Then, according to Lemma~\ref{lambda_nonlin_maps_lemma}, an appropriate composition of the maps $\eta$, $\xi$ and $\nu$ restricts to a real symmetric map $h\colon U\to V$ that can be decomposed as
$$
h = g\circ S_\cH^s,
$$
where $s\in[0,1/m]$, the map $g\colon S_\cH^s(U)\to V$ is $\lambda$-nonlinear of degree not greater than $D^4$,
$$
\diam U\asymp |v_k|\qquad\text{and}\qquad \dist(z_{k}, U)<C_3 \diam(U),
$$
where the implicit constant and the constant $C_3>0$ depend on $m$, $B$, $D$ and $\varepsilon=\varepsilon(B, D, m)$.

From compactness considerations (c.f., Lemma~\ref{lambda_nonlinear_compactness_lemma} and Lemma~\ref{H_compactness_lemma}), one can always find a topological disk $W\subset U\setminus\bbR$, such that $h$ takes $W$ univalently onto a real-symmetric round disk, the distortion of $h|W$ is uniformly bounded, the identities $\diam W\asymp\diam U\asymp \dist(W,\bbR)$ hold with implicit constants depending on the parameters $\lambda = \lambda(B, D, m)$, $m$, $D$, and the point $z_{k}$ lies in the same half-plane with the domain $W$. 
Finally, define the disk $O$ as the (univalent) pullback of $W$ by the branch of $S_\cH^{-k}$, taking $z_{k}$ back to $z_0$. 
Observe that if $k=0$, then the proof is complete, so from now on we assume that $k>0$.

Now, there are two possibilities: if 
$|\Im z_{k}|>\varepsilon|v_{k}|$,
then  Koebe Distortion Theorem implies that the disk $O$ satisfies the conditions of the Lemma~\ref{path_to_real_line_proposition} for some constant $\beta$ that depends on $B$, $D$ and $m$, and the proof is complete.

If $|\Im z_{k}|\le\varepsilon|v_{k}|$, then according to~(\ref{v_k1_upper_bound_eq}), the constant $\varepsilon<1$ can be chosen sufficiently small to guarantee that $|v_{k-1}|<m^2/2$. Then, according to property~(\ref{extension_property}) of Definition~\ref{Hm_def}, the shadow $S_\cH$ has an extension to the round disk $D(z_{k-1}, 2|v_{k-1}|)$ as an analytic map with topological degree not greater than $1/m$. Define $\tl D$ as the disk $\tl D := D(z_{k-1}, |v_{k-1}|)$. Then the Koebe Distortion Theorem implies that the domain $S_\cH(\tl D)$ contains the disk $D(z_k,C_4|v_k|)$, for some constant $C_4>0$ that depends on $m$. Now, if the parameter $\varepsilon$ is sufficiently small, then the condition $|\Im z_{k}|\le\varepsilon|v_{k}|$ implies that the domain $S_\cH(\tl D)$ contains a (relatively) large part of the interval $\Omega_\cH\cap\bbR$, hence, according to Lemma~\ref{lambda_nonlin_maps_lemma}, one can choose the domain $U$ so that $U\subset S_\cH(\tl D)$. Then, the (multivalent) Koebe Theorem implies that if $W'$ is the univalent pullback of $W$ by the branch of $S_\cH^{-1}$, taking $z_{k}$ to $z_{k-1}$, then 
$$
\diam W'\asymp |v_{k-1}|\qquad\text{and}\qquad \dist(z_{k-1}, W')<C_5 \diam(W'),
$$
where the implicit constant and the constant $C_5>0$ depend on $m$, $B$, $D$ and $\varepsilon=\varepsilon(B, D, m)$.

Finally, observe that according to~(\ref{v_k1_upper_bound_eq}), we have $$\dist(W'\cup\{z_{k-1}\},\bbR)\asymp |v_{k-1}|,$$ hence, the proof of Lemma~\ref{path_to_real_line_proposition} is completed by applying the (univalent) Koebe Distortion Theorem to the inverse branch of $S_\cH^{-k+1}$, taking $z_{k-1}$ back to $z_0$.

\end{proof}

\begin{proof}[Proof of Proposition~\ref{small_J_everywhere_proposition}]
The proof is obtained immediately by combining Lemma~\ref{lambda_nonlin_maps_lemma} and Lemma~\ref{path_to_real_line_proposition}. More specifically, Lemma~\ref{path_to_real_line_proposition} implies that some iterate $S_\cH^n$ takes a neighborhood $O'$ univalently onto a real-symmetric round disk with diameter contained in the interval $[\eta(0),\xi(0)]$. Furthermore, the neighborhood $O'$ satisfies the first two conditions of Proposition~\ref{small_J_everywhere_proposition}, for an appropriate real number $\alpha$, where $\alpha$ is independent of $r$ and $z$. Next, according to Lemma~\ref{lambda_nonlin_maps_lemma}, there exists $k\in\bbN$, such that $S_\cH^k$ restricts to a $\lambda$-nonlinear map $g\colon U\to V$ of degree not greater than $D^4$, with the domain and range both contained in $S_\cH^k(O')$ and having diameters commensurable with $\diam(S_\cH^k(O'))$. Finally, we define $O:= S_\cH^{-n}(U)$. Since the inverse branch $S_\cH^{-n}$ has bounded distortion, then, possibly, after increasing the constant $\alpha$, one ensures that all conditions of Proposition~\ref{small_J_everywhere_proposition} are satisfied.
\end{proof}

\subsection{Geometric convergence of renormalizations and $C^{1+\alpha}$-rigidity for analytic maps}
\label{subsec-proofs-analytic}
The proof of  Theorem~\ref{th:rencov2} will now follow by a straightforward application of C.~McMullen's general result on dynamical inflexibility~\cite[Theorem 9.15]{McM-ren2}. We will restate his theorem in a convenient form for holomorphic pairs.

A line field $\mu$ on $\hat \bbC$ is \textit{parabolic} if $\mu = A^*(dz/d\overline z)$ where $A\in\mathrm{Aut}(\hat\bbC)$. A holomorphic map $f\colon U\to V$ between two domains $U, V\subset\hat\bbC$ is \textit{nonlinear} if it does not preserve a parabolic line field. The nonlinearity $\nu(f)$ is defined as follows:
$$
\nu(f):= \inf_{\mathrm{parabolic} \,\,\mu} \left(\sup_{B\subset U} \int_B |\mu-f^*\mu| \sigma^2(z) dx\,dy \right),
$$
where $\sigma(z)$ denotes the density of the spherical metric, and the supremum is taken over all round disks $B$, such that $f$ is defined and univalent on the round disk of twice the radius. It is straightforward that $\nu(f)\neq 0$ if and only if $f$ is nonlinear.

\begin{definition}
Given $\varepsilon>0$, a holomorphic pair $\cH$ is \textit{$\varepsilon$-uniformly nonlinear} (with respect to its Julia set $J(\cH)$), if for any $z\in J(\cH)$ and any round disk $D\subset\Omega_\cH$ centered at $z$, there exist a domain $O\subset D$ and a single valued map $g = S_\cH^{-n}\circ S_\cH^{n+k}\colon O\to\bbC$ with the following property: 
if $A\in\mathrm{Aut}(\hat\bbC)$ is an affine rescaling of $D$ onto the unit disk, then the map $h = A\circ g\circ A^{-1}$ is nonlinear on $A(O)$ with $\nu(h)\ge \varepsilon$.
\end{definition}


\begin{definition}
A holomorphic pair $\cH$ is \textit{uniformly twisting} (with respect to its Julia set $J(\cH)$), if any holomorphic pair $\cH'$ quasiconformally conjugate to $\cH$ is $\varepsilon$-uniformly nonlinear, for some $\varepsilon>0$.
\end{definition}

For $K>1$, let $\nu^K(\cH)$ be the supremum of all values of $\varepsilon$, such that any $\cH'$,  $K$-quasiconformally conjugate to $\cH$, is $\varepsilon$-uniformly nonlinear. By compactness, it follows that if $\cH$ is uniformly twisting, then $\nu^K(\cH)>0$ for any $K$

\begin{theorem}\cite{McM-ren2}\label{McM_theorem}
Let $\cH$ be a uniformly twisting holomorphic pair and let $\phi\colon\hat\bbC\to\hat\bbC$ be a $K$-quasiconformal conjugacy that takes $\cH$ to another holomorphic pair $\cH'$. Then for any $\delta$-deep point $z$ of $J(\cH)$, the map $\phi$ is $C^{1+\beta}$-conformal at $z$. The constant $\beta>0$ depends on $K$, $\delta$ and $\nu^K(\cH)$.
\end{theorem}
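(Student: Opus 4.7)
The plan is to follow the general scheme of McMullen's deep inflexibility arguments from \cite{McM-ren2}. The statement expresses a quantitative Hölder form of the principle that, at a deep point of a uniformly twisting dynamical system, a quasiconformal conjugacy becomes asymptotically conformal, because it is pinned down on a set that fills the ambient neighborhood and it conjugates maps whose nonlinearity resists affine rescaling.

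The first step is a rescaling setup. For a small $r>0$, let $A_r$ and $A_r'$ be complex affine maps normalizing the source and target disks of radius $r$ centered at $z$ and $\phi(z)$ to the unit disk, chosen so that the rescaled conjugacy $\phi_r := A_r' \circ \phi \circ A_r^{-1}$ fixes $0$ and $1$ (say). The family $\{\phi_r\}$ is uniformly $K$-quasiconformal, hence precompact in the topology of uniform convergence on compact sets. The goal becomes: along every sequence $r_n \to 0$, the limits $\Phi = \lim \phi_{r_n}$ are affine, with geometric rate of convergence in the scale.

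The second step exploits that $z$ is $\delta$-deep. At scale $r$, the set $J(\cH)$ leaves only holes of diameter $\le r^{1+\delta}$ inside $D(z,r)$; after rescaling by $1/r$, the complement of $A_r(J(\cH))$ consists of components of diameter $\le r^{\delta}\to 0$. Thus the limiting set $A_r(J(\cH))$ Hausdorff-converges to the whole plane along any subsequence. Since $\phi$ is determined on $J(\cH)$ by the dynamical conjugacy relation (not merely as a $K$-qc map), any subsequential limit $\Phi$ is determined on a dense set by the limiting dynamics of the rescaled system.

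The third step uses the uniform twisting. For each scale $r$, the uniform twisting hypothesis supplies, inside $D(z,r)$, a topological disk $O$ of diameter commensurable with $r$ and a single-valued dynamical branch $g = S_\cH^{-n}\circ S_\cH^{n+k}$ on $O$ such that, after affine normalization, the resulting map has nonlinearity at least $\nu^K(\cH)$. The conjugacy $\phi$ transports $g$ to an analogous map $g'=S_{\cH'}^{-n}\circ S_{\cH'}^{n+k}$ on $\phi(O)$. Passing to a subsequential rescaling limit, we obtain two nonlinear holomorphic maps on definite domains, conjugated by a $K$-quasiconformal limit $\Phi$. The rigidity lemma (compare Theorem~9.13 of~\cite{McM-ren2}): a $K$-quasiconformal conjugacy between two holomorphic maps, one of which carries a definite amount of nonlinearity, and whose qc conjugacy is pinned on a set that is dense in the ambient domain, must be Möbius; combined with the normalization, it must be affine. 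Hence every subsequential limit $\Phi$ is affine.

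The fourth and hardest step is quantifying the convergence. The qualitative conclusion of Step 3 only says $\phi_r \to $ affine along subsequences. To obtain $C^{1+\beta}$-conformality one needs a Hölder rate. The standard device is a telescoping argument across dyadic scales $r_k = 2^{-k}$: compare $\phi_{r_k}$ and $\phi_{r_{k+1}}$ using both the quasiconformal compactness (which yields uniform continuity in moduli of geometric quantities) and the uniform twisting at scale $r_k$ (which gives definite quantitative improvement towards affineness per scale). One shows that the deviation of $\phi_{r_k}$ from the best-fitting affine map decays like $C\mu^k$ for some $\mu = \mu(K, \nu^K(\cH))\in(0,1)$, taking advantage of the fact that the depth $\delta$ controls the error incurred from the complementary components at each rescaling. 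Choosing $\beta>0$ so that $\mu < 2^{-\beta}$ translates this geometric decay into the desired Hölder bound $|\phi(w) - \phi(z) - \phi'(z)(w-z)|\le C|w-z|^{1+\beta}$. The main obstacle is precisely this passage from subsequential qualitative rigidity to a quantitative scale-by-scale estimate; it requires compactness of the space of ``marked dynamical configurations'' (uniformly twisting holomorphic pairs with bounded qc distortion) together with continuous dependence of the nonlinearity functional $\nu(\cdot)$ on Carathéodory limits.
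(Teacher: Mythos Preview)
The paper does not prove this statement: Theorem~\ref{McM_theorem} is quoted from McMullen~\cite{McM-ren2} (it is his Theorem~9.15 on dynamic inflexibility) and is used as a black box in the proof of Theorem~\ref{th:rencov2}. There is therefore no proof in the paper for your proposal to be compared against.

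As an outline of McMullen's own argument your sketch is broadly on target, particularly Steps~1, 2 and~4. One point to sharpen is Step~3: the mechanism is not exactly a ``rigidity lemma'' saying a qc conjugacy pinned on a dense set must be M\"obius. Rather, the Beltrami coefficient $\mu_\Phi$ of any subsequential limit $\Phi$ is invariant under the holomorphic dynamics of the geometric limit; uniform twisting provides, at every scale and location, a map with nonlinearity bounded below by $\nu^K(\cH)$, and a nonzero measurable line field cannot be simultaneously almost parabolic (as it would be under blow-up) and invariant under such uniformly nonlinear maps. This forces $\mu_\Phi=0$ a.e., hence $\Phi$ is conformal. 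The quantitative Step~4 then proceeds via McMullen's visual distortion estimates and compactness of the relevant geometric limits, much as you describe.
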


\begin{proof}[Proof of Theorem~\ref{th:rencov2}]
Let $f$ and $g$ be two analytic multicritical circle maps satisfying all conditions of the theorem. Then, according to Theorem (cite), there exist $n\in\bbN$ and $K=K(B,D)\ge 1$ such that the renormalizations $\cR^nf$ and $\cR^ng$ extend to two $K$-quasiconformally conjugate holomorphic commuting pairs $\cH$ and $\cH'$ from $\mathbf H^D(m)$, where $m = m(B,D)>0$. 

It follows from Proposition~\ref{delta_deep_proposition} that zero is a $\delta$-deep point of $J(\cH)$, where $\delta = \delta(B,D)>0$. Proposition~\ref{small_J_everywhere_proposition} together with compactness of $\lambda$-nonlinear maps (Lemma~\ref{lambda_nonlinear_compactness_lemma}) imply that $\cH$ is uniformly twisting with $\nu^K(\cH)$ depending only on $B$ and $D$. Then it follows from Theorem~\ref{McM_theorem} that the holomorphic pairs $\cH$ and $\cH'$ are $C^{1+\beta}$-conformally conjugate at zero. This immediately implies geometric convergence of renormalizations.
\end{proof}

Finally, Theorem~\ref{thm-rigidity-analytic} follows from Theorem~\ref{th:rencov2} and the following 
reformulated version of Proposition~4.3 from~\cite{FM1}:
\begin{proposition}\label{dFdM43_prop}
Let $f$ be a $C^3$-smooth multicritical circle map with an irrational rotation number of type bounded by $B$, and let $h\colon\bbR\slash\bbZ\to\bbR\slash\bbZ$ be a homeomorphism. If there exist constants $C>0$ and $0<\lambda<1$ such that
\begin{equation}\label{dFdM_condition}
\left|\frac{|I|}{|J|}-\frac{|h(I)|}{|h(J)|}\right|\le C\lambda^n,
\end{equation}
for each pair of adjacent atoms $I,J\in\cI_n(f)$, for all $n\ge 0$, then $h$ is a $C^{1+\beta}$-diffeomorphism for some $\beta>0$ that depends only on $\lambda$ and $B$.
\end{proposition}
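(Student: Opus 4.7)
The plan is to realize the derivative of $h$ as the limit of the ratios $|h(I)|/|I|$ taken over shrinking chains of atoms of the dynamical partitions of $f$, and to leverage the geometric rate in~(\ref{dFdM_condition}) to secure both convergence and H\"older regularity of this limit.

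First, because $f$ has bounded type $B$, Theorem~\ref{real_bounds_thm1} provides uniform bounded geometry of every partition $\cI_n(f)$, a uniform bound $N=N(B)$ on the number of atoms of $\cI_{n+1}(f)$ refining any atom of $\cI_n(f)$, and constants $C_0>0$ and $\sigma=\sigma(B)\in(0,1)$ such that $\max_{I\in\cI_n(f)}|I|\le C_0\sigma^n$. Combined with~(\ref{dFdM_condition}) at sufficiently deep levels, this also forces the image partitions $h(\cI_n(f))$ to satisfy approximate bounded geometry, so in particular $\max_{I\in\cI_n(f)}|h(I)|$ decays geometrically as well.

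Next, for each $x\in\bbT$ and each $n\ge 0$ I would define the mean distortion
$$
r_n(x) := \frac{|h(I_n(x))|}{|I_n(x)|},
$$
where $I_n(x)\in\cI_n(f)$ is an atom containing $x$, chosen consistently so that $I_{n+1}(x)\subset I_n(x)$. The crucial estimate to establish is
$$
|r_{n+1}(x)-r_n(x)|\le C_1\lambda^{n+1},
$$
with $C_1=C_1(B,C)$. This comes from decomposing $I_n(x)$ into its at most $N$ sub-atoms $J_1,\dots,J_M$ at level $n+1$ and applying the hypothesis across the chain of adjacent pairs $(J_k,J_{k+1})$ to telescope from $|h(J_{k_0})|/|J_{k_0}|$ (with $J_{k_0}=I_{n+1}(x)$) to $\sum_k|h(J_k)|/\sum_k|J_k|$; bounded geometry keeps the cumulative error from the $N$ elementary bounds proportional to a single $\lambda^{n+1}$. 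Summing in $n$ then shows that $r_n$ converges uniformly to a continuous function $D$ with $|D(x)-r_n(x)|\le C_2\lambda^n$. To get H\"older control, given $x,y$ close, pick the largest $n$ with $x,y$ in atoms of $\cI_n(f)$ that are equal or adjacent, so that $|x-y|\ge c\sigma^n$ by bounded geometry. The same telescoping argument, applied once across the adjacency, gives $|r_n(x)-r_n(y)|\le C_3\lambda^n$, hence
$$
|D(x)-D(y)|\le C_4\lambda^n\le C_5|x-y|^\beta,\qquad \beta=\frac{\log(1/\lambda)}{\log(1/\sigma)}>0,
$$
a quantity depending only on $\lambda$ and $B$.

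Finally, $D$ is bounded away from zero: since $\sum_{I\in\cI_n(f)}|h(I)|=1$ and all $|h(I)|$ at level $n$ are mutually commensurable (by the hypothesis and bounded geometry), each is commensurable with $1/\#\cI_n(f)$, which is in turn commensurable with $|I|$; hence $r_n(x)\asymp 1$ uniformly. The uniform convergence $r_n(x)\to D(x)$ together with the shrinking diameters of $I_n(x)$ then identifies $D(x)$ with $h'(x)$, so $h$ is a $C^{1+\beta}$-diffeomorphism. The main obstacle I anticipate is precisely the telescoping estimate $|r_{n+1}(x)-r_n(x)|\le C_1\lambda^{n+1}$: one must show that the $N$ applications of the elementary bound, chained along the subdivision of $I_n(x)$, combine via the multiplicative control from bounded geometry into a single $O(\lambda^{n+1})$ error on the full ratio of sums rather than an $N$-fold blow-up. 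The rest of the argument is routine packaging of this estimate against the geometric scale $\sigma^n$ of the partitions.
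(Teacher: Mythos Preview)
The paper does not supply its own proof of this proposition; it is stated as a reformulation of Proposition~4.3 of de~Faria--de~Melo~\cite{FM1} and simply cited. Your outline is essentially the argument of~\cite{FM1}: define $r_n(x)=|h(I_n(x))|/|I_n(x)|$, use the hypothesis together with bounded geometry to obtain a geometric Cauchy estimate $|r_{n+1}-r_n|=O(\lambda^{n+1})$, and read off the H\"older exponent $\beta=\log(1/\lambda)/\log(1/\sigma)$ from the scale of the partitions.

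One technical point to tighten. Your justification that $r_n\asymp 1$ via ``all $|h(I)|$ at level $n$ are mutually commensurable'' is not quite right as stated: the hypothesis only compares \emph{adjacent} atoms, and chaining $q_n$ such comparisons across level $n$ could in principle accumulate. The clean way around this (and the way the additive estimate is actually bootstrapped) is to telescope multiplicatively in $n$ first. From the hypothesis and bounded geometry one gets
\[
\frac{r_{n+1}(x)}{r_{n+1}(y)}=\frac{|h(I)|}{|h(J)|}\cdot\frac{|J|}{|I|}=1+O(\lambda^{n+1})
\]
for $x,y$ in adjacent level-$(n+1)$ atoms $I,J$; since $r_n(x)$ is a convex combination of the at most $N(B)$ values $r_{n+1}$ over the children of $I_n(x)$, this yields $r_n(x)/r_{n+1}(x)=1+O(\lambda^{n+1})$. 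The convergent product $\prod_k(1+O(\lambda^k))$ then gives $r_n\asymp r_0\asymp 1$ uniformly, after which your additive estimate $|r_{n+1}-r_n|\le C_1\lambda^{n+1}$ and the rest of the argument go through exactly as you describe.
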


\section{Proof of Theorems \ref{th:rencov1} and \ref{main_rigidity_theorem}}\label{sec:reduction}
\subsection{Lipschitz property of renormalization}

Let us begin by quoting a Lipschitz property of renormalization operator (cf. Lemma A.2 of \cite{GdM}):
\begin{lemma}
  \label{lem:lip}
  Given $B\in\NN$, $D\in\NN$, and $C>0$ there exists $L>1$ such that the following is true. Let $\zeta_1$, $\zeta_2$ be two $C^3$-smooth multicritical commuting pairs
  whose total criticality is bounded by $D$ and whose heights satisfy $\chi(\zeta_1)=\chi(\zeta_2)\leq B$. Assume further, that the geometry of the first dynamical partition of at least one of the pairs is bounded by $C$, while the first dynamical partition of the other pair satisfies conditions~(1) and~(2) of Definition~\ref{bound_geom_def}. Then
   $$\dist_{C^0}(\cR\zeta_1,\cR\zeta_2)<L\dist_{C^0}(\zeta_1,\zeta_2).$$
\end{lemma}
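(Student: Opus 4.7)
The plan is to telescope the difference $\cR\zeta_1-\cR\zeta_2$ through the $a+1\le B+1$ compositions that define the prerenormalization, using uniform Lipschitz bounds on the iterates of $\eta_1$ obtained from the bounded-geometry hypothesis on $\zeta_1$. Since the $C^0$-pseudometric on commuting pairs is invariant under affine rescaling, I first pass to the Möbius-normalized forms $\tilde\zeta_i=(\tilde\eta_i,\tilde\xi_i)$ with $\tilde\eta_i(0)=-1$, $\tilde\xi_i(0)=1$. Conditions~(1) and~(2) of Definition~\ref{bound_geom_def}, which hold on \emph{both} pairs, ensure that the Möbius normalization $A_i$ and the affine renormalization factor $1/\eta_i(0)$ stay bounded above and below by constants depending on $C$, so that passing between $\zeta_i$, $\tilde\zeta_i$, and $\cR\zeta_i$ distorts $C^0$-distances only by a multiplicative factor depending on $C$.

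The critical preparatory step is to extract a uniform $C^1$-bound on $\tilde\eta_1$ and $\tilde\xi_1$ on the dynamically relevant intervals (including the slightly enlarged intervals on which the $C^3$-extensions in property~(\ref{com_pairs_ext_property}) of Definition~\ref{commut_pair_def} act, so that iterating makes sense). Here is where condition~(3) of Definition~\ref{bound_geom_def} enters decisively: each branch decomposes as a finite composition of bounded-distortion diffeomorphisms and odd-power monomials $x^{d_j}$ with $\prod d_j\le D$. The Koebe distortion theorem applied to the diffeomorphism factors, combined with the bound on the exponents $d_j$, produces a constant $L_0=L_0(C,D)$ satisfying $\mathrm{Lip}(\tilde\eta_1),\mathrm{Lip}(\tilde\xi_1)\le L_0$ on these intervals, and hence $\mathrm{Lip}(\tilde\eta_1^{\,k})\le L_0^{\,k}$ for every $k\le B$.

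Writing $\eps=\dist_{C^0}(\zeta_1,\zeta_2)$ and $a=\chi(\zeta_1)=\chi(\zeta_2)\le B$, the first component $\tilde\eta_i^{\,a}\!\circ\!\tilde\xi_i$ of $p\cR\tilde\zeta_i$ is controlled by a standard telescoping argument,
\begin{align*}
\|\tilde\eta_1^{\,a}\!\circ\!\tilde\xi_1-\tilde\eta_2^{\,a}\!\circ\!\tilde\xi_2\|_{C^0}
&\le \mathrm{Lip}(\tilde\eta_1^{\,a})\,\|\tilde\xi_1-\tilde\xi_2\|_{C^0}
+\sum_{k=0}^{a-1}\mathrm{Lip}(\tilde\eta_1^{\,k})\,\|\tilde\eta_1-\tilde\eta_2\|_{C^0}\\
&\le (L_0^{\,B}+BL_0^{\,B-1})\,\eps,
\end{align*}
while the second component of $p\cR\tilde\zeta_i$ is the restriction of $\tilde\eta_i$, whose $C^0$-distance between the two pairs is at most $\eps$. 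The ratio term $|\tilde\xi_i(0)/\tilde\eta_i(0)|$ appearing in the definition of $\dist_{C^0}(\cR\zeta_1,\cR\zeta_2)$ is controlled in the same way; absorbing the bounded rescaling factors from the first paragraph yields the required constant $L=L(B,D,C)$.

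The main obstacle is the second step: converting the purely geometric bounded-geometry hypothesis on $\zeta_1$ into quantitative $C^1$-bounds on its branches. This is precisely why the statement hypothesizes the full condition~(3) of Definition~\ref{bound_geom_def} (and not only (1)--(2)) on at least one of the pairs. One pair suffices because the telescoping uses Lipschitz bounds on $\tilde\eta_1$ only, never on $\tilde\eta_2$: the closeness of $\tilde\eta_2$ and $\tilde\xi_2$ to their counterparts enters purely through pointwise $C^0$-differences, for which conditions (1)--(2) are enough to keep the normalizations non-degenerate.
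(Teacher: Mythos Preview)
The paper does not actually prove this lemma: it is stated with the remark ``cf.\ Lemma~A.2 of \cite{GdM}'' and no further argument is given. So there is no paper proof to compare against; your proposal is a genuine attempt to supply one.

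Your overall strategy---M\"obius-normalize, telescope the composition $\tilde\eta_1^{\,a}\circ\tilde\xi_1-\tilde\eta_2^{\,a}\circ\tilde\xi_2$ using Lipschitz bounds on iterates of $\tilde\eta_1$ only, and absorb the rescaling constants using conditions~(1)--(2)---is exactly the standard one (and is essentially how \cite{GdM} proceeds). You also correctly identify the key asymmetry in the hypotheses: the telescoping
\[
\tilde\eta_1^{\,a}\circ\tilde\xi_1-\tilde\eta_2^{\,a}\circ\tilde\xi_2
=\tilde\eta_1^{\,a}\circ\tilde\xi_1-\tilde\eta_1^{\,a}\circ\tilde\xi_2
+\sum_{j=0}^{a-1}\bigl(\tilde\eta_1^{\,a-j}\circ\tilde\eta_2^{\,j}-\tilde\eta_1^{\,a-j-1}\circ\tilde\eta_2^{\,j+1}\bigr)\circ\tilde\xi_2
\]
requires derivative control only on $\tilde\eta_1$, which is why full bounded geometry (condition~(3)) is assumed on just one of the pairs.

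Two small corrections. First, your appeal to the Koebe distortion theorem is out of place: the pairs are only $C^3$, not analytic, so Koebe does not apply. Fortunately you do not need it---condition~(3) of Definition~\ref{bound_geom_def} \emph{directly} hands you diffeomorphism factors $\psi_j$ with distortion bounded by $C$, and combining this with the interval-length comparability from conditions~(1)--(2) yields the uniform bound $|\tilde\eta_1'|\le L_0(C,D)$ on the relevant intervals without any complex-analytic input. Second, the telescoping evaluates $\tilde\eta_1^{\,k}$ at points of the $\tilde\eta_2$-orbit, which may lie slightly outside $I_{\tilde\eta_1}$; you note this and invoke the $C^3$ extensions from property~(\ref{com_pairs_ext_property}) of Definition~\ref{commut_pair_def}, but one should also remark that conditions~(1)--(2) for $\zeta_2$ keep these orbit points within a definite (in terms of $C$) neighbourhood of the dynamical intervals of $\zeta_1$, so the extended Lipschitz bound indeed applies there. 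With these adjustments, your argument is complete.
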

We note that a Lipschitz property for unbounded type is far from straightforward to prove (cf. Lemma 4.1 of \cite{GMdM}).

\begin{remark}\label{Lip_remark}
The statement of Lemma~\ref{lem:lip} holds for $C^3$ multicritical circle maps instead of the commuting pairs. The proof is identical.
\end{remark}

For a bi-critical commuting pair, we will say that the critical point at zero is the \textit{marked} critical point.

\begin{lemma}\label{combinatorics_vs_distance_lemma}
For any $D\in\bbN$ and $C>0$, there exists $\varepsilon = \varepsilon(D,C)>0$ such that the following holds. Let $\zeta_1$ and $\zeta_2$ be two bi-critical or unicritical commuting pairs of class $C^3$ with the same irrational rotation number, the total criticalities bounded by $D$, and the geometries of all dynamical partitions bounded by $C$. If $\zeta_1$ and $\zeta_2$ have different 
signatures, then there exists a non-negative integer $n\ge 0$, such that
$$
\dist_{C^0}(\cR^n\zeta_1, \cR^n\zeta_2)>\varepsilon.
$$
\end{lemma}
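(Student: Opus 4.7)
\emph{Proof plan.} The plan is to argue by contradiction using compactness of the family of commuting pairs with bounded geometry, combined with the decomposition property~(3) of Theorem~\ref{real_bounds_thm1} which encodes the critical structure directly in the components of the renormalizations.

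Suppose the conclusion fails. Then for each $k\in\bbN$ there exist $C^3$ bi-critical or unicritical commuting pairs $\zeta_1^{(k)},\zeta_2^{(k)}$ with the same irrational rotation number, total criticality at most $D$, geometries of all dynamical partitions bounded by $C$, yet with $\sigma(\zeta_1^{(k)})\ne\sigma(\zeta_2^{(k)})$ and
$$\sup_{n\ge 0}\dist_{C^0}(\cR^n\zeta_1^{(k)},\cR^n\zeta_2^{(k)})\le 1/k.$$
By Remark~\ref{bounded_goemetry_remark} the rotation numbers are of uniformly bounded type $B=B(C)$, so the pool of combinatorial rotation data is finite. Moreover, the number of possible combinatorial parts $(N;d_0,\ldots,d_{N-1})$ of a signature with $N\in\{1,2\}$ and $\prod d_j\le D$ is finite. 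Passing to a subsequence we may assume all these discrete invariants are constant in $k$, i.e.\ both $\sigma(\zeta_1^{(k)})$ and $\sigma(\zeta_2^{(k)})$ have combinatorial parts equal to $T_1$ and $T_2$ respectively, with the remaining $\delta$-coordinates converging in the appropriate simplex.

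Next I would use property~(3) of Theorem~\ref{real_bounds_thm1} as follows. For each fixed renormalization level $n$, the components of $\cR^n\zeta_j^{(k)}$ are presented as compositions
$$\psi_{s+1}\circ p_s\circ\psi_s\circ\cdots\circ p_0\circ\psi_0,$$
with $p_j=x^{d_j}$ odd-integer power maps and $\psi_j$ diffeomorphisms whose distortions are bounded by $C$. This uniform decomposition, combined with normalization of the pair, gives $C^0$ pre-compactness of the sequences $(\cR^n\zeta_j^{(k)})_{k}$ for each $n$. Diagonalizing, I would extract subsequences so that $\cR^n\zeta_j^{(k)}\to\hat\zeta_j^{(n)}$ in $C^0$ as $k\to\infty$ for every $n$, and by the closeness hypothesis $\hat\zeta_1^{(n)}=\hat\zeta_2^{(n)}=:\hat\zeta^{(n)}$.

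The contradiction is now extracted by showing that some quantitative feature of the signature \emph{must} be read off from the normalized compositions at some level, and is incompatible with two pairs of distinct signatures generating the same $C^0$-limits. I would split into the following cases.

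\emph{Case 1 (different combinatorial parts $T_1\ne T_2$).} The signatures carry different numbers or different criticalities of critical points. These critical points reside in prescribed elements of the dynamical partitions whose sizes are controlled by $C$ via parts~(1)--(2) of Theorem~\ref{real_bounds_thm1}; in particular, the ``extra'' critical point cannot escape to distance $0$ from the marked critical point while preserving bounded geometry. Hence at a suitably chosen depth $n_*$, the critical point in question appears as a power-map factor $x^{d}$ in the composition~(3) of Theorem~\ref{real_bounds_thm1} for $\cR^{n_*}\zeta_2^{(k)}$, while no corresponding factor is present in $\cR^{n_*}\zeta_1^{(k)}$ (or the exponent differs). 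Since the surrounding $\psi_j$ have bounded distortion, pre- and post-composing with bounded-distortion diffeomorphisms cannot cancel the $C^0$ distance between $x^{d_1}$ and $x^{d_2}$ on a definite interval: a compactness argument yields a positive lower bound on the resulting $C^0$-discrepancy, contradicting $1/k$-closeness for large $k$.

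\emph{Case 2 (same combinatorial part but different $\delta$-values).} Here both pairs are bi-critical with matching criticalities, but the second critical point of $\zeta_j^{(k)}$ sits at a different ``phase'' in the invariant measure. The $\delta$-values are read combinatorially from the sequence of indices of the dynamical partition intervals that contain the second critical point at each renormalization level. Since distinct irrational $\delta$-values differ in some finite combinatorial coordinate, there exists a renormalization level $n(k)$ at which the extra critical point lies in \emph{different} atoms of the partition for $\zeta_1^{(k)}$ and $\zeta_2^{(k)}$. Renormalizing further if necessary to bring these atoms into the central interval, the insertion position of the corresponding $x^d$ factor in decomposition~(3) of Theorem~\ref{real_bounds_thm1} differs by a definite fraction of the normalized interval for the two pairs, again producing a uniform $C^0$ separation.

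The main obstacle I anticipate is making the last step in each case quantitative and uniform in $k$: showing that two compositions of the form $\psi_{s+1}\circ p_s\circ\cdots\circ p_0\circ\psi_0$ with bounded-distortion $\psi_j$ and normalized domain are $C^0$-separated by a constant depending only on $C$ and $D$ whenever their power-map factorizations differ in either the multiset of exponents or in the ``combinatorial position'' of a factor. This is ultimately a statement about non-degeneracy of the map $(\text{sequence of }p_j\text{'s and }\psi_j\text{'s})\mapsto(\text{composition})$ modulo bounded-distortion gauge, and I would treat it by another compactness-and-continuity argument using Lemma~\ref{lambda_nonlinear_compactness_lemma}-style reasoning on the space of such decompositions.
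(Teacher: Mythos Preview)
Your overall strategy --- contradiction plus compactness of the class of pairs with bounded geometry --- matches the paper's, and your case split is a reasonable organization. However, the proposal remains a plan rather than a proof: the step you yourself flag as the ``main obstacle'' is precisely where all the content lies, and your suggested resolution (``another compactness-and-continuity argument'' on the space of bounded-distortion decompositions) does not obviously succeed. The difficulty is that the non-marked critical point of one pair can sit arbitrarily close to an endpoint of the normalized interval, or arbitrarily close to the critical point of the other pair; in either situation the discrepancy in power-map factorizations need not produce a $C^0$ gap bounded away from zero. Compactness of the decomposition space does not rule this out, because the limiting decomposition may be degenerate (a critical point collapsing onto $0$, $\pm 1$, or onto the other critical point), and then two genuinely different factorizations can have the same $C^0$ limit.

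The paper sidesteps this obstacle by working not with the abstract factorization of Theorem~\ref{real_bounds_thm1}(3) but directly with the spatial positions of the non-marked critical points in the M\"obius-normalized coordinates on $[-1,1]$. The crucial step (Claim~1 in the paper's proof) is a combinatorial argument showing that if the $\delta$-coordinates differ, then at some renormalization level one of the non-marked critical points is a definite distance $\varepsilon_1(C)$ away from the other non-marked critical point \emph{and} from each of $-1,0,1$. This is obtained by locating the first level $k$ at which the two critical points land in combinatorially different partition atoms, then renormalizing a few more times and using bounded geometry to control where the points end up. Once this separation is secured, the $C^0$ gap follows from an elementary derivative comparison: on an interval of length $\asymp\delta^{1/(d-1)}$ around the critical point of $\zeta_{1,n}$ the derivative is at most $\delta$, while on that same interval $\zeta_{2,n}$ has no nearby critical point and its derivative exceeds $2\delta$. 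The remaining case (matching $\delta$'s, mismatched exponents) is then dispatched by a short compactness argument close to what you outline. The spatial-separation step is exactly what makes the argument quantitative and uniform, and it is what your plan is missing.
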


\begin{proof}
Assume, the pairs $\zeta_1$ and $\zeta_2$ satisfy the conditions of the lemma. Then, for each $n\ge 0$ and $j=1,2$, let $A_{j,n}$ be the M\"obius transformations, such that $A_{j,n}(0)=0$, and the commuting pairs 
$$
\zeta_{j,n}:= A_{j,n}\circ\cR^n\zeta_j\circ A_{j,n}^{-1}
$$
are defined on the interval $[-1,1]$; that is, if $\zeta_{j,n}=(\eta,\xi)$, then $\eta(0)=-1$ and $\xi(0)=1$. For each $n\ge 0$ and $j=1,2$, let $c_{j,n}\in[-1,1]$ be the non-marked critical point of $\zeta_{j,n}$. If the pair $\zeta_{j,n}$ happens to be unicritical, then we set $c_{j,n}=0$.

The signatures of $\zeta_1$ and $\zeta_2$ can differ in two ways: either the combinatorial distances between the critical points are different, or they are the same, but the critical exponents of the critical points do not match. We start by considering the first possibility.

\underline{Claim 1:} If the combinatorial distance between the two critical points of $\zeta_1$ is different from the combinatorial distance between the two critical points of $\zeta_2$, then there exists $\varepsilon_1 = \varepsilon_1(C)>0$ and $n\ge 0$, such that one of the non-marked critical points $c_{1,n}$ and $c_{2,n}$ is at least distance $\varepsilon_1$ away from the other non-marked critical point and from each of the points $-1$, $0$, $1$. 

\underline{Proof of Claim 1:} 
First, we observe that there exists a sufficiently small constant $\varepsilon_2 = \varepsilon_2(C)>0$, such that 
if for some $k\ge 0$, we have $|c_{1,k}|, |c_{2,k}|<\varepsilon_2$, and the points $c_{1,k}$ and $c_{2,k}$ either lie on different sides from zero, or exactly one of them is equal to zero, then the claim follows almost immediately. Indeed, in this case, then boundedness of the geometry implies that after an appropriate number of renormalizations, the points will remain on different sides of zero, and at least one of them will be some fixed distance $\varepsilon_3$ away from $0$, $1$ and $-1$. This fixed distance $\varepsilon_3>0$ depends only on the constant~$C$.

In general, since $\rho(\zeta_1)=\rho(\zeta_2)$, the dynamical partitions of $\zeta_1$ and $\zeta_2$ are combinatorially the same at any level. 
If the combinatorial distances between two critical points of $\zeta_1$ and $\zeta_2$ are different, then there exists the smallest $k\in\bbN$, such that the non-marked critical points of $\zeta_1$ and $\zeta_2$ belong to combinatorially different intervals of the dynamical partitions of level $k$. This implies that $c_{1,k-3}$ and $c_{2,k-3}$ belong to combinatorially different intervals of the dynamical partitions of level $3$ for the pairs $\zeta_{1,k-3}$ and $\zeta_{2,k-3}$ respectively. At the same time, we can assume that $|c_{1,k-3}-c_{2,k-3}|<1-2\varepsilon_4$, for some constant $\varepsilon_4 = \varepsilon_4(C)>0$, since the points $c_{1,k-3}$ and $c_{2,k-3}$ belong to combinatorially the same intervals from the corresponding partitions of level $2$. This implies that if $|c_{1,k-3}-c_{2,k-3}|\ge\varepsilon_4$, then one of the non-marked critical points $c_{1,k-2}$ and $c_{2,k-2}$ is at least distance $\varepsilon_4$ away from the other non-marked critical point and from each of the points $-1$, $0$, $1$.

Finally, without loss of generality, we may assume that $\varepsilon_4$ is sufficiently small, so that if $|c_{1,k-3}-c_{2,k-3}|<\varepsilon_4$, then the points $c_{1,k+2}$ and $c_{2,k+2}$ satisfy the conditions, considered in the beginning of the proof. Namely, they lie on different sides from zero (or exactly one of them is equal to zero), and $|c_{1,k+2}|, |c_{2,k+2}|<\varepsilon_2$. Hence, we conclude that Claim~1 holds for
$$
\varepsilon_1 = \min\{\varepsilon_3, \varepsilon_4\}.
$$
This completes the proof of Claim~1.

\vspace{2mm}

Now let $\zeta_1$, $\zeta_2$, $\varepsilon_1$ and $n$ be the same as in the statement of Claim~1. Without loss of generality, we may assume that $c_{1,n}$ is the critical point that is at least distance $\varepsilon_1$ away from the other critical points of both pairs $\zeta_{1,n}$ and $\zeta_{2,n}$, as well as from the endpoints $-1$ and $1$. For any sufficiently small $\delta>0$, let $I\subset [-1,1]$ be the largest interval, centered at $c_{1,n}$, such that 
\begin{equation}\label{I_eq}
\zeta_{1,n}'(x)\le\delta,\qquad\text{for all } x\in I.
\end{equation}
Due to boundedness of the geometry, if $d\le D$ is the critical exponent of $c_{1,n}$, then 
\begin{equation}\label{I_commens_delta_eq}
|I|\asymp \delta^{\frac{1}{d-1}}
\end{equation}
with the implicit constant depending only on $C$ and on $d$.

At the same time, due to boundedness of the geometry, there exists $\varepsilon_5 = \varepsilon_5(\delta, C, D)>0$, such that if $x\in[-1,1]$ is at least distance $\varepsilon_5$ away from $-1$, $1$, and all critical points of $\zeta_{2,n}$, then $\zeta_{2,n}'(x)>2\delta$. Furthermore, $\varepsilon_5\to 0$ as $\delta\to 0$.

Fix $\delta>0$ so that $|I|<\varepsilon_1$ and $\varepsilon_5<\varepsilon_1/3$. Such $\delta$ depends only on $C$ and $D$. Then, according to Claim~1, we have $\zeta_{2,n}'(x)>2\delta$, for any $x\in I$. Together with~(\ref{I_eq}) and~(\ref{I_commens_delta_eq}), this implies that $\| \zeta_{1,n}-\zeta_{2,n}\|_{C^0(I)} \ge \varepsilon$, for some $\varepsilon>0$ that depends only on $\delta = \delta(D, C)$. Hence, we complete the proof of Lemma~\ref{combinatorics_vs_distance_lemma} in the case when the combinatorial distances between the critical points are different for the two pairs $\zeta_1$ and $\zeta_2$.

Next, suppose that the combinatorial distances between the critical points are the same for the two pairs $\zeta_1$ and $\zeta_2$, but the orders of the critical points of the pairs do not match. We will consider the case, when $\zeta_1$ and $\zeta_2$ are bi-critical; for unicritical pairs, an obvious modification of the same argument will work. Assume that the statement of Lemma~\ref{combinatorics_vs_distance_lemma} does not hold in this case. Then for any $k\in\bbN$, there exist two bi-critical pairs $\zeta_k$ and $\hat\zeta_k$ with different pairs of critical exponents, such that
\begin{equation}\label{false_comparison_eq}
\dist_{C^0}(\cR^n\zeta_k, \cR^n\hat\zeta_k)<\frac{1}{k},
\end{equation}
for any $n$.
Proceeding in the same way as in the beginning of the proof of Claim~1, we can ensure that for some $n\ge 0$ and some $\varepsilon_6 = \varepsilon_6(C)>0$, both non-marked critical points of $\cR^n\zeta_k$ and $\cR^n\hat\zeta_k$ are at least distance $\varepsilon_6$ away from $0$ and the endpoints of the dynamical intervals. 
Due to $C^0$ compactness of the class of pairs, whose dynamical partitions have the geometry bounded by $C$ at all levels, we can pass to a subsequence of tuples $(n, k)$, so that the corresponding subsequences of commuting pairs $\{\cR^n\zeta_k\}$ and $\{\cR^n\hat\zeta_k\}$ converge to some commuting pairs $\zeta$ and $\hat\zeta$ respectively, and the orders of the critical points within each subsequence are the same. By construction, both limiting pairs are bi-critical and have non-matching tuples of criticalities. On the other hand, condition~(\ref{false_comparison_eq}) implies that $\zeta = \hat\zeta$, hence their criticalities should match, which is a contradiction. This completes the proof of Lemma~\ref{combinatorics_vs_distance_lemma}.

\end{proof}



\ignore{

If $\zeta=(\eta,\xi)$ is an analytic multicritical commuting pair, and $\eps>0$, we will say that $\zeta$ {\it has an  $\eps$-extension}, if
 $\eta$ and $\xi$ have analytic continuations to $U_\eps([0,\xi(0)])$,
  $U_\eps([0,\eta(0)])$ respectively, whose critical points are all contained in $[\xi(0),\eta(0)]$.
Let us prove the following lemma, which is an analogue of Lemma 7.15 of \cite{GdM}:
\begin{lemma}
  \label{lem:rotnum}
  Let $\eps>0$. There exist $\delta>0$, $\hat\eps>0$, and $K>1$ such that the following holds. Let $\zeta_1=(\eta_1,\xi_1)$ be a renormalizable $C^3$ multicritical commuting pair. Furthermore, let
  $\zeta_2=(\eta_2,\xi_2)$ be an analytic multicritical commuting pair  which has an $\eps$-extension. Assume $\eta_1(0)=\eta_2(0)=1$, and $\dist_{C^0}(\zeta_1,\zeta_2)<\delta$.

  Then there exists a renormalizable analytic multicritical commuting pair $\zeta_3$ for which:
  \begin{itemize}
  \item $\cR\zeta_1$ and $\cR\zeta_3$ have the same signatures;
  \item the renormalized pair $\cR\zeta_3$ has an $\hat\eps$-extension;
  \item $\rho(\cR\zeta_3)=\rho(\cR\zeta_1)$;
    \item $\dist_{C^0}(\cR\zeta_1,\cR\zeta_3)<K\dist_{C^0}(\zeta_1,\zeta_2).$

    \end{itemize}
  \end{lemma}
\begin{proof}
  By compactness considerations, there exists $r>0$ and a choice of a map $f_{\zeta_2}$ (\ref{eq:fzeta}) which is analytic in the annulus $$A\equiv \{|\Im z|<r\}/\ZZ$$
  and whose critical points lie on the circle. Let $\phi$ be the analytic ``straightening'' coordinate, realizing
  $$f_{\zeta_2}=\phi\circ \xi_2\circ\phi^{-1}.$$
  Since $\phi(0)=\phi(1)$, the map $g=\phi\circ \xi_1\circ\phi^{-1}$ is a well-defined circle homeomorphism. Of course, it is only continuous, however,
  $\rho(g)=\rho(\zeta_1)$. Bounded distortion considerations applied to $\phi$ imply that
  $$W\equiv\dist_{C^0}(g,f_{\zeta_2})<K_1(\eps)\dist_{C^0}(\zeta_1,\zeta_2).$$
  Consider the family of maps $h_t\equiv R_{tW}\circ f_{\zeta_2}$, where $R_\theta$ is the rigid rotation by angle $\theta$. By continuity considerations,
  there exists $t\in[-1,1]$ such that $\rho(h_t)=\rho(g)$. We let $$\hat f=h_t,\text{ and }\hat\zeta=(\eta_2,\phi^{-1}\circ h_t\circ \phi).$$
  By Lemma~\ref{lem:lip} and considerations of bounded distortion,
  $$\dist_{C^0}(\cR\zeta_1,\cR\hat\zeta)<K_1\dist_{C^0}(\zeta_1,\zeta_2).$$
  We now would like to ``correct'' the analytic map to obtain the required signature. For this, consider first a local quasiconformal conjugacy between
  $\hat f$ and an element $\hat F$ of the model family given by Lemma~\ref{model_family_lemma}. Consider an element $F$ of the same family which is a perturbation of $\hat F$ having the same signature as $\cR\zeta_1$, and perform a local quasiconformal surgery, replacing $\hat F$ by $F$ in the above conjugacy.
  This produces a map $\tl f$ which is a small perturbation of $\hat f$. Finally, set
  $$\zeta_3=(\eta_2,\phi^{-1}\circ \tl f\circ \phi).$$

\end{proof}
}

\subsection{Almost commuting pairs}
\begin{definition}
An \textit{almost commuting pair} of order $k\geq 3$ is a $C^k$ multicritical commuting pair $\zeta=(\eta,\xi)$, such that 
\begin{enumerate}
	\item the maps $\eta$ and $\xi$ have analytic extensions to some neighborhoods of their respective dynamical intervals $[\xi(0),0]$ and $[0,\eta(0)]$, (we will use the same symbols $\eta$ and $\xi$ to denote these analytic extensions);
	\item the maps $\eta\circ\xi$ and $\xi\circ\eta$ have criticality exactly $k$ at zero.
\end{enumerate}
\end{definition}

Let $[\zeta]= [\eta,\xi]$ denote the commutator
$$
[\zeta]\equiv \eta\circ \xi-\xi\circ\eta,
$$
defined in some complex neighborhood of the origin. 

We note that the commutation condition of appropriate $C^k$-smooth extensions of the maps $\eta|_{[\xi(0),0]}$ and $\xi|_{[0,\eta(0)]}$ 
in the setting of almost commuting pairs is equivalent to the following:
$$
[\zeta](x)=\eta\circ \xi(x)-\xi\circ\eta(x)=o(x^k) \qquad\text{as }x\to 0.
$$



Since the order of the critical point of $\eta\circ\xi$ at zero can only increase after renormalization, we evidently have
\begin{proposition}
If $\zeta$ is an almost commuting pair of order $k$, then its renormalization $\cR\zeta$ is also an almost commuting pair of order $\tl k\ge k$.

\end{proposition}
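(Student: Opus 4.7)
The plan is to unpack the definition of $\cR\zeta$ explicitly and verify the two almost commuting conditions directly. Writing $h = \chi(\zeta)$ and $A(x) = x/\eta(0)$, we have $\tilde\eta := A\circ\eta^h\circ\xi\circ A^{-1}$ and $\tilde\xi := A\circ\eta\circ A^{-1}$. The existence of analytic extensions of $\tilde\eta$ and $\tilde\xi$ to neighborhoods of the new dynamical intervals is immediate from the corresponding property of $\zeta$ together with the fact that $A$ is affine. All the real work therefore lies in the criticality condition on the two compositions $\tilde\eta\circ\tilde\xi$ and $\tilde\xi\circ\tilde\eta$ at the origin.

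Writing out the compositions and substituting $y = A^{-1}(x)$ yields
$$[\tilde\zeta](x) = \frac{1}{\eta(0)}\Bigl(\eta^h(\xi\circ\eta(y))-\eta^h(\eta\circ\xi(y))\Bigr).$$
Set $u(y) := \eta\circ\xi(y)$ and $v(y) := \xi\circ\eta(y)$. By the almost commuting hypothesis, $[\zeta](0) = 0$, so $u(0) = v(0)$; call this common value $w$. Furthermore each of $u$ and $v$ has criticality exactly $k$ at $0$, and the condition $[\zeta](y) = o(y^k)$ forces the two Taylor jets at $0$ to match through order $k$:
$$u(y) = w + c y^k + o(y^k), \qquad v(y) = w + c y^k + o(y^k),$$
with the \emph{same} nonzero leading coefficient $c$.

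The next step is to analyze $\eta^h$ near $w$. By property~(iii) of Definition~\ref{commut_pair_def} applied to the analytic extension of $\eta$, each of the points $w, \eta(w), \dots, \eta^{h-1}(w)$ is either a regular point or a power-law critical point of $\eta$. Let $m := \prod_{j=0}^{h-1} d_j$, where $d_j$ is the local criticality of $\eta$ at $\eta^j(w)$ (taking $d_j = 1$ at regular points). Then near $w$ one has a local expansion $\eta^h(w+s) = \eta^h(w) + C s^m + O(s^{m+1})$ with $C \ne 0$. Substituting the matching expansions of $u$ and $v$ and keeping leading terms gives
$$\eta^h(u(y)) = \eta^h(w) + C c^m y^{km} + o(y^{km}), \qquad \eta^h(v(y)) = \eta^h(w) + C c^m y^{km} + o(y^{km}).$$
Thus both $\tilde\eta\circ\tilde\xi$ and $\tilde\xi\circ\tilde\eta$ have criticality exactly $\tilde k := km$ at the origin, and dividing by $\eta(0)$ and changing variables back yields $[\tilde\zeta](x) = o(x^{\tilde k})$, completing the verification.

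The only subtlety is the last cancellation: one must track not only the order of vanishing but the exact leading coefficients of $u$ and $v$, so that the cancellation in $\eta^h(u) - \eta^h(v)$ occurs at order $\tilde k = km$ rather than merely $k$. This is precisely what the sharp almost commuting hypothesis supplies — identical $k$-jets at $0$, not merely simultaneous $k$-flatness. The inequality $\tilde k \ge k$ is then automatic from $m \ge 1$, with equality iff the finite orbit $\{w, \eta(w), \dots, \eta^{h-1}(w)\}$ avoids the critical set of $\eta$.
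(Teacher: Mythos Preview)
Your proof is correct and supplies the explicit computation that the paper omits; the paper simply declares the statement ``evident'' from the observation that the criticality of $\eta\circ\xi$ at zero can only increase under post-composition with further iterates of $\eta$. Your argument is exactly the natural unfolding of that remark: you identify $\tilde\eta\circ\tilde\xi$ and $\tilde\xi\circ\tilde\eta$ as $A\circ\eta^h\circ(\xi\circ\eta)\circ A^{-1}$ and $A\circ\eta^h\circ(\eta\circ\xi)\circ A^{-1}$, read off $\tilde k = km$ from the local power-law structure of $\eta^h$ at $w$, and then check that the matching $k$-jets of $u$ and $v$ force $\eta^h(u)-\eta^h(v)=o(y^{km})$ via the factorization $s^j-t^j=(s-t)(s^{j-1}+\cdots+t^{j-1})$.
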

We will say that $\zeta$ is an almost commuting pair (without specifying its order), if it is an almost commuting pair of some order $k\ge 3$.

For a compact set $K\subset\bbC$ and $\eps>0$, let $U_\eps(K)$ be the $\eps$-neighborhood of $K$. That is,
$$
U_\eps(K) = \{z\in\bbC\mid \min_{w\in K}|z-w|<\eps\}.
$$
If $\zeta=(\eta,\xi)$ is an analytic multicritical commuting pair or an almost commuting pair, and $\eps>0$, we will say that $\zeta$ {\it has an  $\eps$-extension}, if
$\eta$ and $\xi$ have analytic extensions to $U_\eps([0,\xi(0)])$,
$U_\eps([0,\eta(0)])$ respectively, extending continuously to the boundaries of the domains and having all critical values on the real line.
In particular, one can introduce the sup-norm for such maps on the domains of their $\eps$-extensions, with the notations:
$$
\|\xi\|_\eps^\infty,\;\|\eta\|_\eps^\infty,\text{ and }\|\zeta\|_\eps^\infty := \max(\|\xi\|_\eps^\infty, \|\eta\|_\eps^\infty).
$$

\begin{definition}
For an integer $D\ge 3$ and real numbers $\eps>0$, $M>1$, we denote by $\mathcal C(\eps, M, D)$ the family of all almost commuting pairs $\zeta = (\eta,\xi)$ with $\xi(0)=1$, such that 
\begin{itemize}
	\item $\zeta$ has an $\eps$-extension with $\|\zeta\|_\eps^\infty \le M$,
	\item the sum of the orders of all critical points of $\eta$ and $\xi$ in $U_\eps([0,\xi(0)])$ and $U_\eps([0,\eta(0)])$ respectively is bounded by $2D$,
	\item the rotation number $\rho(\zeta)$ is irrational, and the geometry of the dynamical partitions of $\zeta$ at all levels is bounded by $M$.
\end{itemize}
\end{definition}

We start by the following basic observation:
\begin{lemma}\label{CeMD_compactness_lemma}
For any integers $r\ge 0$, $D\ge 3$ and real numbers $\eps>0$, $M>1$, the family $\mathcal C(\eps,M,D)$ is sequentially compact in the space of all almost commuting pairs with respect to the $C^r$-metric $\dist_{C^r}$.
\end{lemma}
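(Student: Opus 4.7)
The plan is to take an arbitrary sequence $\{\zeta_n\}=\{(\eta_n,\xi_n)\}\subset\mathcal C(\eps,M,D)$ and use a normal-families argument to extract a subsequence converging in $C^r$ to a limit which we then verify still lies in $\mathcal C(\eps,M,D)$. First, since $\xi_n(0)=1$ and the geometry of the first dynamical partition is bounded by $M$, the endpoints $\eta_n(0)$ lie in a compact subset of $(-\infty,0)$; extracting a subsequence, assume $\eta_n(0)\to a$ with $-M\leq a\leq -1/M$. The total criticality bound $2D$ forces the order $k_n$ of criticality of $\eta_n\circ\xi_n$ at $0$ to be a bounded positive integer, so after another extraction we may assume $k_n=k$ is constant.

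Second, the uniform sup-norm bound $\|\zeta_n\|_\eps^\infty\leq M$ makes $\{\eta_n\}$ a normal family on the fixed domain $U_\eps([0,1])$, and makes $\{\xi_n\}$ a normal family on every set of the form $U_{\eps'}([0,a])$ with $\eps'<\eps$, since such a set is contained in $U_\eps([0,\eta_n(0)])$ for all sufficiently large $n$. Montel's theorem, together with a diagonal extraction over an exhaustion of $U_\eps([0,a])$ by such neighborhoods, yields a subsequence along which $\eta_n\to\eta$ and $\xi_n\to\xi$ uniformly on compact subsets of the respective domains $U_\eps([0,1])$ and $U_\eps([0,a])$. The limits $\eta,\xi$ are analytic, satisfy $\xi(0)=1$, $\eta(0)=a$, and $\|(\eta,\xi)\|_\eps^\infty\leq M$.

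Third, I check that $(\eta,\xi)$ is an almost commuting pair in $\mathcal C(\eps,M,D)$. Uniform convergence on a neighborhood of $0$ upgrades to convergence of all derivatives at $0$ via the Cauchy integral formula, so the condition $[\eta_n,\xi_n](x)=o(x^k)$ passes to $[\eta,\xi](x)=o(x^k)$, verifying the almost-commutation of order $k\geq 3$. Hurwitz's theorem applied to $\eta_n'$ and $\xi_n'$ shows that every critical point of $\eta$ or $\xi$ is a limit of critical points of $\eta_n$, $\xi_n$ whose total multiplicities add up to at least that of the limiting critical point; therefore the total criticality of the limit is bounded by $2D$. For the rotation number and partition geometry, note that the endpoints of the atoms of the $m$-th dynamical partition $\mathcal I_m(\zeta_n)$ are iterates of $0$ under $\eta_n,\xi_n$ and hence depend continuously on the pair in $C^0$; thus for each fixed $m$, the geometry bound $M$ for $\mathcal I_m(\zeta_n)$ passes to the limit $\mathcal I_m(\eta,\xi)$. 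By Remark~\ref{bounded_goemetry_remark}, bounded geometry at every level implies that $\rho(\eta,\xi)$ is irrational of type bounded in terms of $M$, so $(\eta,\xi)\in\mathcal C(\eps,M,D)$.

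Finally, to upgrade this uniform-on-compacta analytic convergence to convergence in the $C^r$-metric $\dist_{C^r}$, observe that the normalizing M\"obius transformations $A_n$ from the definition of $\dist_{C^r}$ depend smoothly on the endpoints $\eta_n(0),\xi_n(0)$, both of which converge. Since $\eta_n,\xi_n$ converge in $C^\infty$ on the compact sets $[0,1]$ and $[a,0]$ (analyticity on a complex neighborhood with uniform sup-bounds gives $C^r$ convergence for every $r$ via Cauchy estimates), the conjugated maps $A_n\circ\zeta_n\circ A_n^{-1}$ converge in the $C^r$-norm on $[-1,1]$ off each one-sided neighborhood of $0$, and also on those one-sided neighborhoods. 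The main technical point to watch is that no critical point of $\eta_n$ or $\xi_n$ is allowed to escape through the boundary of the $\eps$-extension domain in the limit; this is ruled out precisely by the total criticality cap combined with Hurwitz as above, so no obstruction arises.
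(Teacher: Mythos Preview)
Your proof is correct and follows essentially the same route as the paper's: use bounded geometry to stabilize the domains, apply Montel's theorem to the uniformly bounded $\eps$-extensions to extract a locally uniform limit, and then verify that the limit pair still satisfies all the defining conditions of $\mathcal C(\eps,M,D)$. The paper's own argument is considerably terser---it simply asserts that ``it follows immediately that $\zeta$ satisfies all conditions of the above definition''---whereas you explicitly spell out the Hurwitz argument for the criticality bound, the passage to the limit of the almost-commutation relation via Cauchy estimates, and the persistence of bounded geometry at every level (invoking Remark~\ref{bounded_goemetry_remark} to recover irrationality of the rotation number). These are exactly the points the paper leaves to the reader, and your treatment of them is sound.
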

\begin{proof}
Boundedness of the geometry implies that any sequence of pairs from $\mathcal C(\eps,M,D)$ has a subsequence $\{(\xi_n,\eta_n)\}_{n=1}^\infty$, such that all maps $\xi_j$ are analytic in some neighborhood $W\subset\bbC$ and all maps $\eta_j$ are analytic in some neighborhood $V\subset\bbC$ with the additional condition that $U_{\eps/2}([0,\xi(0)])\subset V$ and $U_{\eps/2}([0,\eta(0)])\subset W$. Then, boundedness of the sup-norm of the maps from $\mathcal C(\eps,M,D)$ implies that the sequence $\{(\xi_n,\eta_n)\}_{n=1}^\infty$ is normal. Thus, one can extract a further subsequence that converges to an almost commuting pair $\zeta$. It follows immediately that $\zeta$ satisfies all conditions of the above definition, hence, $\zeta\in \mathcal C(\eps,M,D)$.
\end{proof}


Next, we note that the proof of complex {\it a priori} bounds \cite{Est_Smania_Yampolsky_2020} (see also Theorem~\ref{ComplexBounds_theorem}) carries over {\it verbatim} to the case of almost commuting pairs. Together with Lemma~\ref{basic_holo_pair_lemma} and the compactness result of Lemma~\ref{CeMD_compactness_lemma}, this yields the following:
\begin{proposition}
	\label{prop-bounds1}
	There exist $\eps,M>0$, depending only on $B$ and $D$, such that the following holds. Let $\zeta$ be an almost commuting pair with an irrational rotation number of type bounded by $B$ and with total criticality bounded by $D$. Denote
	$$\zeta_n=(\eta_n,\xi_n)=\cR^n\zeta.$$
	Then there exists $n_0\in\bbN$, such that for all $n\ge n_0$, we have $\zeta_n\in\mathcal C(\eps,M,D)$.
	The constant $n_0$ can be chosen independently from the pair $\zeta$, provided that $\zeta$ belongs to some fixed family $\mathcal C(\tl\eps,\tl M,D)$.
\end{proposition}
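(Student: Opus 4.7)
The plan is to invoke the complex a priori bounds of Theorem~\ref{ComplexBounds_theorem}---which, as the authors observe, extend verbatim to almost commuting pairs---and then read off each defining condition of $\mathcal{C}(\eps, M, D)$ from the structure of $\mathbf{H}^D(m)$. Concretely, for $\zeta$ satisfying the hypotheses I obtain $m = m(B,D) > 0$ and $N = N(\zeta)$ such that for every $n \geq N$, the renormalization $\zeta_n := \cR^n\zeta$ extends to a holomorphic commuting pair $\cH_n \in \mathbf{H}^D(m)$ with Euclidean-disk range. After normalizing so that $\xi_n(0) = 1$, property~(2) of Definition~\ref{Hm_def} forces $\diam(\Delta_{\cH_n}) \leq 1/m$, which bounds the sup-norms of $\eta_n$ and $\xi_n$ on their holomorphic domains by $1/m$.

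Next I extract the $\eps$-extension. Lemma~\ref{basic_holo_pair_lemma} supplies a definite radius $r = r(m,D) > 0$ such that each of $\eta_n, \xi_n$ admits an analytic extension to the $r$-neighborhood of its dynamical interval, with image contained in $\Delta_{\cH_n}$. Setting $\eps = r$ thus produces an $\eps$-extension with sup-norm at most $1/m$. The bound on the sum of critical orders follows from property~(\ref{real_bound_property}) of Definition~\ref{Hm_def}: each of $\eta_n|J_{\eta_n}$ and $\xi_n|J_{\xi_n}$ decomposes as a composition of diffeomorphisms on complex neighborhoods and odd power maps $p_j(x) = x^{d_j}$ with $\prod d_j \leq D$; since the diffeomorphic factors contribute no critical points and $\sum d_j \leq \prod d_j$ whenever all $d_j \geq 3$, the total sum of critical orders inside the $\eps$-neighborhood is bounded by $2D$. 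Finally, Theorem~\ref{real_bounds_thm} applied to $\cH_n$ yields bounded geometry of every dynamical partition $\cI_k(\zeta_n)$ at every level, with constant depending only on $B$ and $D$. Taking $M$ large enough to absorb these bounds places $\zeta_n \in \mathcal{C}(\eps, M, D)$ with $\eps = \eps(B,D)$ and $M = M(B,D)$.

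For the uniformity of $n_0$ on a fixed $\mathcal{C}(\tl\eps, \tl M, D)$, I argue by contradiction. Were there a sequence $\zeta^{(k)} \in \mathcal{C}(\tl\eps, \tl M, D)$ with $N(\zeta^{(k)}) \to \infty$, then Lemma~\ref{CeMD_compactness_lemma} would yield a $C^0$-convergent subsequence with limit $\zeta^{(\infty)} \in \mathcal{C}(\tl\eps, \tl M, D)$; since $N(\zeta^{(\infty)}) < \infty$ and renormalization depends continuously on the pair at each fixed finite depth, for $k$ large the iterate $\cR^{N(\zeta^{(\infty)})}\zeta^{(k)}$ would be arbitrarily close to $\cR^{N(\zeta^{(\infty)})}\zeta^{(\infty)}$ and hence inherit the holomorphic extension into $\mathbf{H}^D(m)$, contradicting $N(\zeta^{(k)}) \to \infty$. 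The principal technical obstacle---which the authors dispatch by a verbatim reference to Theorem~\ref{ComplexBounds_theorem}---is confirming that the complex bounds proof of~\cite{Est_Smania_Yampolsky_2020} survives the weakening from exact commutativity to finite-order tangency at zero; one must trace through its constructions to verify that only the matching critical structure of $\eta \circ \xi$ and $\xi \circ \eta$ at $0$ is actually used, rather than their pointwise equality in a neighborhood.
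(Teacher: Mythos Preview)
Your proposal is correct and follows exactly the route the paper indicates: the paper's entire argument is the one-sentence remark preceding the proposition, namely that the complex bounds of \cite{Est_Smania_Yampolsky_2020} carry over verbatim to almost commuting pairs, and together with Lemma~\ref{basic_holo_pair_lemma} and the compactness Lemma~\ref{CeMD_compactness_lemma} this yields the result. You have simply unpacked those three ingredients---extracting the $\eps$-extension and sup-norm bound from $\mathbf{H}^D(m)$ via Lemma~\ref{basic_holo_pair_lemma}, the bounded geometry from Theorem~\ref{real_bounds_thm}, and the uniformity of $n_0$ from Lemma~\ref{CeMD_compactness_lemma}---in more detail than the paper itself supplies.
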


The main goal of this subsection is to prove the following theorem:

\begin{theorem}
	\label{thm-near-comm2}
	For any $B\ge 1$ and $D\ge 3$, there exist $\lambda_1\in(0,1)$ and a compact (with respect to $C^r$-metric, for any $r\ge 0$) collection $\cC_{B,D}$ of analytic multicritical commuting pairs such that the following holds.
	For 
	any almost commuting pair $\zeta$ with the rotation number bounded by $B$ and total criticality not greater than $D$, there exist $M>0$ and a sequence of analytic commuting pairs $\{\zeta_n\}_{n=1}^\infty\subset\cC_{B,D}$, such that the inequality 
	$$
	\dist_{C^0}(\zeta_n, \cR^n\zeta)\le M\lambda_1^n
	$$
	holds for any $n\ge 1$, and the pairs $\cR^n\zeta$ and $\zeta_n$ have the same rotation number and the same number of critical points. Furthermore, the constant $M$ can be chosen independently from the pair $\zeta$, provided that $\zeta$ belongs to some fixed family $\mathcal C(\tl\eps,\tl M,D)$.
\end{theorem}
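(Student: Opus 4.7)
The plan is to combine the complex a priori bounds for almost commuting pairs with a quantitative estimate of how quickly $\mathcal{R}^n\zeta$ approaches the subspace of genuine commuting pairs, and then project onto an analytic commuting pair with prescribed combinatorics.

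First, by Proposition~\ref{prop-bounds1}, after a uniformly bounded initial number of renormalizations (absorbed into $M$), we may assume $\mathcal{R}^n\zeta \in \mathcal{C}(\varepsilon, M_0, D)$ for every $n\geq 0$, with $\varepsilon, M_0$ depending only on $B$ and $D$. Lemma~\ref{CeMD_compactness_lemma} then shows this family is $C^r$-precompact for any $r$, and this will provide the compact collection $\mathcal{C}_{B,D}$ at the end of the argument.

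Next I would show that the commutator $[\mathcal{R}^n\zeta]$ decays geometrically in $n$ on a fixed complex neighborhood of the origin. The key algebraic identities are
$$
[\mathcal{R}\zeta](x)=\frac{1}{\eta(0)}\bigl[p\mathcal{R}\zeta\bigr]\bigl(\eta(0)x\bigr),\qquad [p\mathcal{R}\zeta](x) =-(\eta^a)'\bigl(\eta\circ\xi(x)\bigr)\cdot [\zeta](x)+O\bigl([\zeta]^2\bigr),
$$
obtained by direct computation from $\xi\circ\eta=\eta\circ\xi-[\zeta]$ and Taylor expansion of $\eta^a$. Combined with the complex bounds (giving uniform control of $|(\eta^a)'(\eta\circ\xi(x))|$) and the uniform contraction $|\eta(0)|\leq c<1$ on $\mathcal{C}(\varepsilon,M_0,D)$, these identities transfer vanishing of $[\zeta]$ at $0$ to order $\geq 3$ into geometric decay: the coefficient of $x^j$ in $[\mathcal{R}\zeta]$ at $0$ is multiplied by roughly $\eta(0)^{j-1}$, and on a fixed complex neighborhood where the complex bounds supply an analytic extension of the commutator with bounded sup-norm, the Schwarz lemma upgrades this coefficient decay to $\|[\mathcal{R}^n\zeta]\|_{C^0}\leq C\lambda_1^n$ for some $\lambda_1=\lambda_1(B,D)\in(0,1)$.

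Given this decay, for each $n$ I would project $\mathcal{R}^n\zeta$ onto a genuine analytic commuting pair $\tilde\zeta_n$ via an implicit function argument within the compact class $\mathcal{C}(\varepsilon, M_0, D)$: the commutation equation $\eta\circ\xi=\xi\circ\eta$ cuts out a smooth submanifold whose linearized equation (thought of as a first-order variation of $\xi$) can be solved with a uniformly bounded pseudo-inverse, giving $\dist_{C^0}(\tilde\zeta_n,\mathcal{R}^n\zeta)\lesssim\|[\mathcal{R}^n\zeta]\|_{C^0}$. A further small perturbation---a rotation correction followed by a local quasiconformal surgery against the Blaschke model family of Lemma~\ref{model_family_lemma}---then adjusts $\tilde\zeta_n$ so as to have exactly the required rotation number and signature; the adjustment is again of order $\lambda_1^n$. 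Setting $\zeta_n:=\tilde\zeta_n$ and taking $\mathcal{C}_{B,D}$ as the closure inside the analytic commuting subfamily of $\mathcal{C}(\varepsilon,M_0,D)$ completes the proof.

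The main obstacle is the commutator decay estimate and its uniform dependence on the compact class. Tracking the one-step identity is routine, but iterating it while maintaining $C^0$ control on a fixed real neighborhood requires the definite complex extensions supplied by Proposition~\ref{prop-bounds1} together with careful bounded distortion for the compositions defining $p\mathcal{R}\zeta$. A secondary technical point is that the implicit function argument projecting onto commuting pairs must preserve non-degeneracy of the critical points and the prescribed $\varepsilon$-extensions; this should follow from stability of non-degenerate critical points under small analytic perturbations, but needs to be verified uniformly on the compact family.
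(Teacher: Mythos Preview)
Your commutator-decay argument is essentially the content of the paper's Theorem~\ref{thm-comm} and Lemma~\ref{lem-comm}, so that part is fine in spirit. The substantive divergence is in how you pass from an almost commuting pair with small commutator to a genuine analytic commuting pair, and this is where your proposal has a real gap.

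You assert that ``the commutation equation $\eta\circ\xi=\xi\circ\eta$ cuts out a smooth submanifold whose linearized equation \ldots\ can be solved with a uniformly bounded pseudo-inverse.'' But the linearization in the direction of $\xi$ is the twisted cohomological equation
\[
\eta'(\xi(x))\,\delta\xi(x)-\delta\xi(\eta(x))=-[\zeta](x),
\]
and solving this with uniform $C^0$ bounds over the compact class is far from automatic: it is a transfer-operator equation for the (non-hyperbolic, critical) map $\eta$, and nothing in the paper's toolkit gives you a bounded right inverse. Moreover, even if you solved it on a neighborhood of $0$, you would only obtain commutation to one higher order, not a pair that commutes identically wherever both compositions are defined---which is what Definition~\ref{commut_pair_def}\ref{com_pairs_ext_property} requires. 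Iterating the implicit-function step does not obviously converge to a genuine analytic commuting pair, and the subsequent ``quasiconformal surgery against the Blaschke model'' is not specified well enough to see that it produces a commuting pair either.

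The paper avoids this cohomological problem entirely by a different route: it glues $\cR^m\zeta$ into a circle map $\Psi$ (Proposition~\ref{prop-comm2}), which fails to be analytic only at one point where two analytic branches $\Psi_1,\Psi_2$ differ by $O(\|[\cR^m\zeta]\|)$; it interpolates between the branches to obtain a quasiregular map of small dilatation (Proposition~\ref{prop-comm3}); it applies the Measurable Riemann Mapping Theorem to straighten this to an honest analytic multicritical circle map $\check\Psi$; and finally it corrects the rotation number by a rigid rotation and renormalizes once to return to the commuting-pair category. Because any renormalization of an analytic circle map is automatically an analytic \emph{commuting} pair, this sidesteps the question of solving the commutation equation directly. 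Your outline does not contain this idea, and without it the projection step is not justified.
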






For a holomorphic map $f\colon U\to\bbC$ defined on an open domain $U\subset\bbC$, let $\|f\|_U^\infty$ denote the sup-norm of $f$:
$$
\|f\|_U^\infty:= \sup_{z\in U} |f(z)|.
$$

Our key observation is the following:
\begin{theorem}
  \label{thm-comm}
Let $\zeta$ be an almost commuting pair with an irrational rotation number. Then there exist $\tau\in(0,1)$, $m\in\NN$ and a neighborhood $U(0)\subset\bbC$ of zero, such that the following holds:  
  \begin{equation}
    \label{commutator-estimate}
    \|[\cR^m\zeta]\|^\infty_{U(0)}<\tau\|[\zeta]\|^\infty_{U(0)}.
  \end{equation}
The constants $\tau, m$ and the neighborhood $U(0)$ can be chosen independently from the pair $\zeta$, provided that $\zeta$ belongs to some fixed family $\mathcal C(\eps,M,D)$. 


\end{theorem}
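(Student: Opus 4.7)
The plan is to derive a recursive formula for $[\cR\zeta]$ in terms of $[\zeta]$ and iterate it, exploiting the fact that $[\zeta]$ vanishes to high order at the origin so the Schwarz lemma produces contraction. Applying the fundamental theorem of calculus to the difference $\eta^a(\xi\eta(y)) - \eta^a(\eta\xi(y))$ gives
\begin{equation*}
[p\cR\zeta](y) = -\Psi(y)\cdot [\zeta](y), \qquad \Psi(y) = \int_0^1 (\eta^a)'\bigl(\eta\xi(y) + t(\xi\eta(y) - \eta\xi(y))\bigr)\,dt.
\end{equation*}
Setting $\lambda = \eta(0)$ and conjugating by the rescaling $A(x) = x/\lambda$, this becomes $[\cR\zeta](x) = -\lambda^{-1}\Psi(\lambda x)[\zeta](\lambda x)$, and iterating $m$ times yields
\begin{equation*}
[\cR^m\zeta](x) = \pm\,\Lambda^{-1}\,\widetilde\Psi(x)\,[\zeta](\Lambda x),
\end{equation*}
where $\Lambda = \lambda_0\lambda_1\cdots\lambda_{m-1}$ is the total rescaling factor and $\widetilde\Psi(x)$ is the product of the successive $\Psi_j$-factors evaluated at the nested rescaled points.

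The estimate then rests on two ingredients. First, since $\zeta$ is an almost commuting pair of order $k \geq 3$, the function $[\zeta]$ vanishes to order at least $k+1 \geq 4$ at the origin, so the Schwarz lemma applied on a fixed complex disk $U(0)$ (whose existence is guaranteed by Proposition~\ref{prop-bounds1} and by compactness of $\mathcal C(\eps,M,D)$) yields $\|[\zeta]\|_{\Lambda U(0)}^\infty \leq \Lambda^{k+1}\|[\zeta]\|_{U(0)}^\infty$. Second, when the nested chain of rescalings is unrolled, the evaluation points of the successive $\Psi_j$'s are consecutive iterates of the underlying circle map $f = f_{\zeta_0}$, so by the chain rule $\widetilde\Psi(x)$ coincides, up to a bounded multiplicative error, with the derivative $(f^N)'(\tilde p_0)$ of a long composite iterate $f^N$ at an orbit point $\tilde p_0$. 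By the real \emph{a priori} bounds (Theorem~\ref{real_bounds_thm}), $f^N$ maps a dynamical atom of scale $\asymp \Lambda$ containing $\tilde p_0$ onto an atom of scale $\asymp 1$ with uniformly bounded distortion, so $|\widetilde\Psi| \asymp 1/\Lambda$ uniformly over $U(0)$. Combining,
\begin{equation*}
\|[\cR^m\zeta]\|_{U(0)}^\infty \;\leq\; \Lambda^{-1}\cdot C\Lambda^{-1}\cdot \Lambda^{k+1}\,\|[\zeta]\|_{U(0)}^\infty \;=\; C\,\Lambda^{k-1}\,\|[\zeta]\|_{U(0)}^\infty.
\end{equation*}
Since by real bounds each $\lambda_j \leq \lambda_0 < 1$ uniformly on $\mathcal C(\eps,M,D)$ and $k-1 \geq 2$, we get $\|[\cR^m\zeta]\|_{U(0)}^\infty \leq C\lambda_0^{2m}\|[\zeta]\|_{U(0)}^\infty$, which can be made smaller than any prescribed $\tau \in (0,1)$ by choosing $m$ large. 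Uniformity of $\tau$, $m$, and $U(0)$ over the family follows from the compactness statement of Lemma~\ref{CeMD_compactness_lemma}.

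The principal technical obstacle will be the rigorous justification of the identification $\widetilde\Psi(x) \approx (f^N)'(\tilde p_0)$ and the subsequent estimate $|\widetilde\Psi| \asymp 1/\Lambda$. While its heuristic content (total derivative equals ratio of image and preimage lengths) is standard, the rigorous proof requires carefully unrolling the nested rescalings to verify that the unscaled evaluation points are indeed consecutive iterates of $f$, controlling the $O(y)$ error terms in the mean-value formula for each $\Psi_j$ so that those points all lie in a common dynamical atom, and then invoking the bounded-distortion part of Theorem~\ref{real_bounds_thm} on that atom. The remainder of the argument is then a direct Schwarz lemma computation.
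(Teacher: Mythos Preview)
Your approach is essentially that of the paper: write $[p\cR^n\zeta]=f_n\circ\eta\circ\xi-f_n\circ\xi\circ\eta$ for an analytic map $f_n$ (your $\widetilde\Psi$ is a mean-value version of $f_n'$), then combine a derivative bound on $f_n$ with the high-order vanishing of $[\zeta]$ at~$0$.

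The genuine gap is in the estimate $|\widetilde\Psi|\asymp 1/\Lambda$, whose bounded-distortion justification cannot work. Unwinding your iteration gives $\widetilde\Psi(x)\approx f_m'(\eta\xi(\Lambda x))$, the derivative of the composite $f_m$ near the point $\eta\xi(0)$. That point is at scale $\asymp 1$, not $\Lambda$, while its image $\eta_m\xi_m(0)$ is the one at scale $\asymp\Lambda$, so your length-ratio heuristic already has the direction reversed. More importantly, the intermediate orbit $\eta\xi(0)\mapsto\eta_1\xi_1(0)\mapsto\cdots$ consists of closest-return points accumulating at the critical point $0$, so Theorem~\ref{real_bounds_thm}(3) does \emph{not} furnish bounded distortion for $f_m$ on any neighborhood of $\eta\xi(0)$. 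The paper (Lemma~\ref{lem-comm}) obtains the correct size by a different and much simpler argument: writing $(\eta_m\xi_m)'=(f_m'\circ\eta\xi)\cdot(\eta\xi)'$ and using that the rescaled $\eta_m\xi_m$ has bounded derivative while $(\eta\xi)'(x)\sim x^{k-1}$ gives $f_m'(\eta\xi(0))=O(\Lambda^{1-k})$, which is larger than your claimed $\Lambda^{-1}$ by a factor $\Lambda^{2-k}$.

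Your Schwarz computation survives with this correction: $\Lambda^{-1}\cdot\Lambda^{1-k}\cdot\Lambda^{\hat k}=\Lambda^{\hat k-k}$ with $\hat k\ge k+1$, so you still get contraction, at rate $O(\Lambda)$ rather than your $O(\Lambda^{k-1})$. Thus the overall strategy is sound; only the derivative estimate and its justification need to be replaced by the argument of Lemma~\ref{lem-comm}.
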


\noindent
Before proving the theorem we will need the following lemma:
\begin{lemma}
\label{lem-comm}
Let $\zeta=(\eta,\xi)$ be an almost commuting pair of order $k\ge 3$ and an irrational rotation number. For any $n\in\bbN$, define $(\eta_n,\xi_n):=p\cR^n\zeta$. Then for any $n\in\bbN$, there exists an analytic map $f_n$ defined in a neighborhood of the point $\eta(\xi(0))$, such that
$$
\eta_n\circ\xi_n = f_n\circ\eta\circ\xi\qquad\text{and}\qquad \xi_n\circ\eta_n = f_n\circ\xi\circ\eta.
$$
In particular, the commutator $[p\cR^n\zeta]$ has the form
\begin{equation}
  \label{eq-comm}
  [p\cR^n\zeta]=f_n\circ\eta\circ\xi-f_n\circ\xi\circ\eta.
\end{equation}

Set $\lambda_n=|\xi_n(0)|$. 
Then, for all large enough values of $n$, we have
$$f_n'(\eta\circ\xi(0))= O(\lambda_n^{1-k}).$$
\end{lemma}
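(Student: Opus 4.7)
I would prove the lemma by an explicit inductive construction of $f_n$, followed by a leading-order Taylor analysis for the derivative bound. For the base case $n = 0$, set $f_0 = \operatorname{id}$. For the inductive step, the recurrence $\eta_{n+1} = \eta_n^{r_n}\circ\xi_n$, $\xi_{n+1} = \eta_n$ yields
\begin{equation*}
\eta_{n+1}\xi_{n+1} = \eta_n^{r_n}\circ(\xi_n\eta_n), \qquad \xi_{n+1}\eta_{n+1} = \eta_n^{r_n}\circ(\eta_n\xi_n).
\end{equation*}
Applying the inductive hypothesis and setting $f_{n+1} = \eta_n^{r_n}\circ f_n$ verifies the two identities at level $n+1$ (with the assignment of $\eta\xi$ versus $\xi\eta$ to each product alternating with the parity of $n$; this alternation is a bookkeeping artifact and affects neither the commutator formula nor the derivative bound). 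This construction also exhibits $f_n$ as an explicit composition of analytic branches of $\eta$ and $\xi$, hence analytic on a neighborhood of $c = \eta(\xi(0))$.

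For the derivative estimate, I would compare leading Taylor coefficients of $\eta\xi$ and $\eta_n\xi_n$ at $0$. Because the rotation number is irrational, the orbit of $0$ under the word defining $\eta_n\xi_n$ revisits $0$ only at the initial step, so the critical order of $\eta_n\xi_n$ at $0$ equals the critical order $k$ of the first-applied letter. Writing
\begin{equation*}
\eta\xi(x) = c + a x^k + O(x^{k+1}), \qquad \eta_n\xi_n(x) = c_n + a_n x^k + O(x^{k+1}),
\end{equation*}
the identity $f_n\circ\eta\xi = \eta_n\xi_n$ forces $f_n'(c) = a_n/a$, where $a$ is a nonzero constant depending only on $\zeta$. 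Expanding $\xi_n(x) = \xi_n(0) + \beta_n x^k + O(x^{k+1})$ and applying the chain rule, $a_n = \eta_n'(\xi_n(0))\cdot\beta_n$. Now Theorem~\ref{real_bounds_thm} (applicable since the proof of complex bounds carries over to almost commuting pairs) guarantees that the domain of $\xi_n$ and its image both have length commensurable with $\lambda_n = |\xi_n(0)|$; matching the critical scaling $\beta_n \cdot \lambda_n^k \asymp \lambda_n$ across this interval forces $\beta_n \asymp \lambda_n^{1-k}$. The same real bounds, together with the Koebe Distortion Theorem applied via the decomposition in Theorem~\ref{real_bounds_thm}(3), yield $\eta_n'(\xi_n(0)) = O(1)$ uniformly in $n$. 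Multiplying, $a_n = O(\lambda_n^{1-k})$, and hence $f_n'(c) = O(\lambda_n^{1-k})$ as claimed.

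The main obstacle is rigorously establishing analyticity of $f_n$: the functional equation $f_n\circ\eta\xi = \eta_n\xi_n$ defines $f_n$ a priori only as a continuous germ at $c$, since $\eta\xi$ is critical at $0$ so that $(\eta\xi)^{-1}$ is not smooth; merely having matching critical orders would not suffice for analyticity, because the higher-order terms of $\eta_n\xi_n$ need not factor through $\eta\xi$ in a controlled way. The inductive construction circumvents this by producing $f_n$ directly as a composition of analytic branches of $\eta$ and $\xi$, after which the functional equation becomes a consequence rather than a definition.
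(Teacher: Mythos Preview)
Your proof is correct and follows essentially the same route as the paper. The inductive construction of $f_n$ via $f_{n+1}=\eta_n^{r_n}\circ f_n$ is identical (and you correctly flag the parity bookkeeping that the paper glosses over). For the derivative bound, the paper phrases things as a bounded ratio of derivatives,
\[
\sup_{n,x}\frac{\tfrac{d}{dx}\bigl[\lambda_n^{-1}(\eta_n\circ\xi_n)(\lambda_n x)\bigr]}{(\eta\circ\xi)'(x)}<\infty,
\]
and passes to the limit $x\to 0$; you compare leading Taylor coefficients and write $f_n'(c)=a_n/a$. These are the same computation.

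One minor remark: your intermediate decomposition $a_n=\eta_n'(\xi_n(0))\cdot\beta_n$ is an unnecessary detour. It forces you to argue separately that $\xi_n$ has critical order exactly $k$ at $0$ and that $\eta_n'(\xi_n(0))=O(1)$; in the multicritical setting the first of these need not hold (the order of $\xi_n$ alone at $0$ can differ from $k$ if the first letter has a different individual order or if the orbit meets another critical point), and invoking Koebe here is slightly off-target. The cleaner route---which is what the paper does implicitly and what your own setup already supports---is to observe directly that the rescaled map $x\mapsto\lambda_n^{-1}(\eta_n\xi_n)(\lambda_n x)$ lies in a bounded family by real {\it a priori} bounds, so its $x^k$-coefficient $a_n\lambda_n^{k-1}$ is $O(1)$, giving $a_n=O(\lambda_n^{1-k})$ without ever splitting the composition.
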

\begin{proof}
  The proof is easily supplied by induction. Indeed, assume that
  $$[p\cR^n\zeta]=f_n\circ \eta\circ \xi-f_n\circ \xi\circ \eta.$$
  Then
  $$[p\cR^{n+1}\zeta]=\eta_n^r\circ \eta_n\circ \xi_n-\eta_n^r\circ \xi_n\circ \eta_n=\eta_n^r\circ f_n\circ \eta\circ\xi-\eta_n^r\circ f_n\circ \xi\circ\eta,$$
  so $f_{n+1}=\eta_n^r\circ f_n$.

  The proof of the second statement goes as follows: by real {\it a priori} bounds, there exists a neighborhood $U\subset\bbR$ of zero, such that the map $\eta\circ\xi$ has no critical points in $U\setminus\{0\}$, and
  $$
  \sup_{n\in\bbN, x\in U\setminus\{0\}}\left(\frac{d}{dx}\left[\lambda_n^{-1}(\eta_n\circ\xi_n)(\lambda_n x)\right] /\,(\eta\circ\xi)'(x) \right)<\infty. 
  $$
  At the same time, we observe that 
  $$
  \frac{d}{dx}\left[\lambda_n^{-1}(\eta_n\circ\xi_n)(\lambda_n x)\right]= 
  f_n'(\eta\circ\xi(\lambda_nx))\cdot(\eta\circ\xi)'(\lambda_nx)
  $$
  and
  $$
  (\eta\circ\xi)'(x) \sim x^{k-1}\qquad\text{as }x\to 0.
  $$
  Combining all these observations and taking $x\to 0$, completes the proof.
\end{proof}

\begin{proof}[Proof of Theorem~\ref{thm-comm}]
  As before, for $n\in\bbN$, let $p\cR^n\zeta=(\eta_n,\xi_n)$ and set $\lambda_n=|\xi_n(0)|$.
  In view of Lemma~\ref{lem-comm}, we have the following first-order estimate around $x=0$:
  $$[p\cR^n\zeta]=\eta_n\circ\xi_n-\xi_n\circ\eta_n\sim f_n'(\eta\circ\xi(0))[\zeta]\sim f_n'(\eta\circ\xi(0))\cdot cx^{\hat k},$$
  for some constants $c>0$ and $\hat k>k$, such that $[\zeta](x) \sim cx^{\hat k}$.
  Thus, after rescaling we get
  $$|[\cR^n\zeta](x)|\sim \lambda_n^{\hat k-1} |f_n'(\eta\circ\xi(0))|\cdot |c||x|^{\hat k} =  O(\lambda_n^{\hat k-k})\cdot|c||x|^{\hat k} = O(\lambda_n^{\hat k-k})\cdot |[\zeta](x)|,$$
  hence, by real {\it a priori} bounds, for a sufficiently large $m\in\bbN$, there exist $\tau\in(0,1)$ and a neighborhood $U(0)$ that satisfy the statement of the theorem.

The second statement of the theorem follows immediately from sequential compactness of the family $\mathcal C(\eps,M,D)$.
\end{proof}

\noindent
An implication of Theorem~\ref{thm-comm} is that renormalizations of almost commuting pairs with rotation numbers of bounded type converge to analytic commuting pairs. Indeed, by Proposition~\ref{prop-bounds1}, the sequence of such renormalizations is relatively compact, and according to Theorem~\ref{thm-comm}, every limit point of this sequence is a multicritical commuting pair. However, exponential convergence of renormalizations of almost commuting pairs to the commuting ones in $C^r$-metric is not immediately obvious, and requires a proof, which we supply below.

For an  almost commuting pair $\zeta=(\eta,\xi)$ and a regular point $x\in[\eta(0),0]$ of the map $\xi$,
let us perform a gluing procedure from \S\ref{sec:glue} on the interval $[x,\xi(x)]$, identifying its ends by the map $\xi$. Since $\xi$ is locally conformal there, the resulting one dimensional manifold $S$ is an analytic circle. Moreover, the gluing extends to a neighborhood of the endpoints, pasting together the ends of an $\eps$-neighborhood of $[x,\xi(x)]$ to produce a Riemann surface $C\supset S$ whose topological type is a cylinder. Let us select a conformal identification of $C$ with a subannulus $A\subset\CC/\ZZ$ which sends $S$ to the circle $\TT=\RR/\ZZ$. 
The map
$\tl f:S\to S$ from \S\ref{sec:glue} projects to a $C^k$ multi-critical circle mapping $\Psi:\TT\to\TT$ which is analytic everywhere except for a single point $\hat x\in\TT$. However, it has two analytic continuations $\Psi_1$, $\Psi_2$ to a neighborhood of $\hat x$.
The map $\Psi$ is not uniquely defined, and depends on the choice of the conformal uniformization of $C$. Let us call any such map a {\it cylinder map} of $\zeta$. Note the obvious parallels with the cylinder renormalization construction (see e.g. \cite{GorYam}).

In view of complex {\it a priori} bounds and  Theorem~\ref{thm-comm}, we have:


\begin{proposition}
  \label{prop-comm2}
  For every pair of integers $B\ge 1$ and $D\ge 3$, there exist a real-symmetric annulus $A\Supset\TT$ and  constants $K,\hat\eps>0$, such that the following holds. For any almost commuting pair $\zeta$ with an irrational rotation number of type bounded by $B$ and the total criticality bounded by $D$, there exists $m_0>0$ with the property that for every $m\ge m_0$, the renormalization $\cR^m\zeta$ has a cylinder map $\Psi$ defined in $A$.
  Furthermore, in the above notations, $\Psi_1$, $\Psi_2$ are defined and conformal in the disk $D_{\hat\eps}(\hat x)$ of radius $\hat\eps$ around $\hat x$, and
  \begin{equation}\label{Psi12_diff_eq}
  \|\Psi_1(z)-\Psi_2(z)\|^\infty_{D_{\hat\eps}(\hat x)}< K\|[\cR^m\zeta]\|^\infty_{U(0)},
  \end{equation}
  where $U(0)$ is as in Theorem~\ref{thm-comm}. 
  The constant $m_0$ can be chosen independently from the pair $\zeta$, provided that $\zeta$ belongs to some fixed family $\mathcal C(\tl\eps,\tl M,D)$.
  \end{proposition}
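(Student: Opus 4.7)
The plan is to combine complex \textit{a priori} bounds for almost commuting pairs (Proposition~\ref{prop-bounds1}) with the gluing construction of \S\ref{sec:glue} and a direct identification of the two branches $\Psi_1,\Psi_2$ with the two possible compositions $\eta_m\circ\xi_m$ and $\xi_m\circ\eta_m$.

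First I would apply Proposition~\ref{prop-bounds1} to obtain $m_0\in\bbN$, with the uniformity indicated in the statement, such that for every $m\ge m_0$ the renormalization $\cR^m\zeta=(\eta_m,\xi_m)$ belongs to some $\mathcal C(\eps,M,D)$ with $\eps=\eps(B,D)$ and $M=M(B,D)$. Then I would fix a point $x_m\in[\eta_m(0),0]$ that is a regular point of $\xi_m$ and lies at a definite distance from $0$, from $\eta_m(0)$ and from every critical point of $\xi_m$; such a choice exists because of the uniform real \textit{a priori} bounds and the bound on the total criticality.

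Next I would carry out the gluing of \S\ref{sec:glue} on the interval $[x_m,\xi_m(x_m)]$. Since $\xi_m$ admits an $\eps$-extension with $\eps=\eps(B,D)$ and $x_m$ is bounded away from the critical points, the gluing chart is univalent in a definite complex neighborhood of the identification locus, producing a cylinder $C_m\supset S_m$ with a conformal collar of modulus bounded below uniformly in $m$. Uniformizing $C_m$ conformally into a subannulus of $\bbC/\bbZ$ sending $S_m$ to $\TT$ then gives a uniform real-symmetric annulus $A\Supset\TT$ on which the cylinder map $\Psi=\Psi_m$ is defined and is analytic except at one point $\hat x\in\TT$, namely the image of the interior point $0\in I$ where the piecewise formula for $\tl f$ switches between the $I_{\eta_m}$ and $I_{\xi_m}$ cases. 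At the endpoint-identification locus $x_m\sim\xi_m(x_m)$ the two pieces of the formula automatically match analytically through the $\xi_m$-chart, so no second singularity appears.

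The crucial step is the identification of $\Psi_1$ and $\Psi_2$ near $\hat x$. Approaching $\hat x$ from the $I_{\xi_m}$ side one has $\tl f(y)=\eta_m(\xi_m(y))$, while approaching from the $I_{\eta_m}$ side, $\eta_m(y)$ lies outside $I=[x_m,\xi_m(x_m)]$ (because $x_m$ is a definite distance from $0$), so the relevant case of the piecewise formula gives $\tl f(y)=\xi_m(\eta_m(y))$. Both compositions are holomorphic on a uniform complex neighborhood of $0$ thanks to the $\eps$-extension, so in the uniformizing coordinate around $\hat x$ the branches $\Psi_1$ and $\Psi_2$ are precisely $\eta_m\circ\xi_m$ and $\xi_m\circ\eta_m$ conjugated by the uniformization. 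The uniformization and the subsequent chart changes have derivatives bounded above and below uniformly in $m$, by sequential compactness (Lemma~\ref{CeMD_compactness_lemma}) and the Koebe distortion theorem. Consequently, on a disk $D_{\hat\eps}(\hat x)$ of uniform radius we obtain
\begin{equation*}
\|\Psi_1-\Psi_2\|^\infty_{D_{\hat\eps}(\hat x)}\le K\,\|\eta_m\circ\xi_m-\xi_m\circ\eta_m\|^\infty_{U(0)}=K\,\|[\cR^m\zeta]\|^\infty_{U(0)},
\end{equation*}
where $U(0)$ is the neighborhood from Theorem~\ref{thm-comm}.

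The main obstacle will be the careful verification that the uniformization from $C_m$ to the subannulus of $\bbC/\bbZ$ has bounded distortion on a definite neighborhood of $\hat x$ independent of $m$. This reduces to showing that the cylinders $C_m$ admit a uniform conformal model around $S_m$, which in turn follows from the $\eps$-extension with uniform $\eps,M$ produced by Proposition~\ref{prop-bounds1} together with Lemma~\ref{CeMD_compactness_lemma}. Once this is in place, the pointwise identification of $\Psi_1,\Psi_2$ with $\eta_m\circ\xi_m,\xi_m\circ\eta_m$ is immediate from the piecewise formula for $\tl f$, and the bound (\ref{Psi12_diff_eq}) follows by pulling back a bounded-distortion conformal change of variables.
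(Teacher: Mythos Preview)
Your proposal is correct and follows essentially the same route as the paper: invoke Proposition~\ref{prop-bounds1} to land in a fixed $\mathcal C(\eps,M,D)$, perform the gluing of \S\ref{sec:glue} on $\cR^m\zeta$, identify $\hat x$ as the image of $0$, and use compactness of $\mathcal C(\eps,M,D)$ together with Koebe-type distortion bounds on the uniformization to transfer the commutator bound.

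The one cosmetic difference is in how the two branches are represented. You write $\Psi_1,\Psi_2$ as the uniformization-conjugates of $\eta_m\circ\xi_m$ and $\xi_m\circ\eta_m$ and bound $\|\Psi_1-\Psi_2\|$ directly via the Lipschitz constant of the uniformizing map $\phi$. The paper instead uses the gluing relation $\phi=\phi\circ\xi_m$ to rewrite the branches as $\phi\circ\eta_m\circ\phi^{-1}$ and $\phi\circ\xi_m^{-1}\circ\eta_m\circ\xi_m\circ\phi^{-1}$, estimates $\|\eta_m-\xi_m^{-1}\circ\eta_m\circ\xi_m\|$ on a small disk $D_{\eps_0}(x_0)$ (with $x_0\neq 0$ chosen so that $\eta_m$ is conformal there) via a bound on $\|(\xi_m^{-1})'\|$, and then propagates the estimate to $D_{\hat\eps}(\hat x)$ by Koebe for branched coverings. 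Your formulation avoids the detour through $x_0$ and the branched-Koebe step, at the cost of having to observe that $\phi$ is defined and uniformly Lipschitz near the image point $\eta_m(\xi_m(0))$; this follows, as you note, from compactness. Both representations are equivalent through the gluing identification, so the two arguments are really the same.
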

  
\begin{proof}
According to Proposition~\ref{prop-bounds1}, there exists $m_0=m_0(\zeta)\in\bbN$ such that for every $m\ge m_0$, the renormalization $\cR^m\zeta$ is contained in some fixed class $\mathcal C(\eps,M,D)$, where $\eps$ and $M$ depend only on the parameters $B$ and $D$. If $\zeta$ belongs to a fixed family $\mathcal C(\tl\eps,\tl M,D)$, then, according to the same proposition, the constant $m_0$ can be chosen independently from $\zeta$. Let $\zeta_m=(\eta_m,\xi_m)$ denote the $m$-th renormalization $\cR^m\zeta$, and let $\phi_{\zeta,m}$ be a conformal map from $U(0)$ to a neighborhood of the point $\hat x$ on the cylinder $\bbC\slash\bbZ$, obtained as the restriction of a conformal identification of the Riemann surface $C$ with an annulus $A\subset\bbC\slash\bbZ$ in the case of the gluing procedure for the pair $\zeta_m$.  

Due to compactness of the family $\mathcal C(\eps,M,D)$, there exists $\eps_0>0$, such that for any pair $\zeta$ and any iterate $m\ge m_0(\zeta)$, there exists a point $x_0\in \frac{1}{2}U(0)$ with the property that the maps $\eta_m$ and $\eta_m\circ\xi_m$ are conformal in the domain $D_{\eps_0}(x_0)$, the map $\xi_m^{-1}$ is conformal in the domain $\eta_m(\xi_m(D_{\eps_0}(x_0)))$ and 
$$
\|(\xi_m^{-1})'\|_{\eta_m(\xi_m(D_{\eps_0}(x_0)))}^\infty<K_1,
$$
for some constant $K_1>1$. Then 
 the branches $\Psi_1$ and $\Psi_2$ are conformal in the domain $\phi_{\zeta,m}(D_{\eps_0}(x_0))$ and can be represented as
$$
\Psi_1 = \phi_{\zeta,m}\circ\eta_m\circ\phi_{\zeta,m}^{-1}\qquad\text{and}\qquad \Psi_2 = \phi_{\zeta,m}\circ\xi_m^{-1}\circ\eta_m\circ\xi_m\circ\phi_{\zeta,m}^{-1}.
$$
Hence,
$$
\|\phi_{\zeta,m}^{-1}\circ\Psi_1\circ\phi_{\zeta,m}-\phi_{\zeta,m}^{-1}\circ\Psi_2\circ\phi_{\zeta,m}\|^\infty_{D_{\eps_0}(x_0)} = \| \eta_m - \xi_m^{-1}\circ\eta_m\circ\xi_m\|^\infty_{D_{\eps_0}(x_0)} \le
$$
$$
\le\|(\xi_m^{-1})'\|_{\eta_m(\xi_m(D_{\eps_0}(x_0)))}^\infty\cdot \|[\cR^m\zeta]\|_{D_{\eps_0}(x_0)}^\infty +o(\|[\cR^m\zeta]\|_{D_{\eps_0}(x_0)}^\infty)< 
$$
$$
<K_1 \|[\cR^m\zeta]\|_{D_{\eps_0}(x_0)}^\infty + o(\|[\cR^m\zeta]\|_{D_{\eps_0}(x_0)}^\infty).
$$

We may assume that $\eps_0$ is sufficiently small, so that ${D_{\eps_0}(x_0)}\subset U(0)$,  
hence, due to Theorem~\ref{thm-comm} 
we have 
$$
\|\phi_{\zeta,m}^{-1}\circ\Psi_1\circ\phi_{\zeta,m}-\phi_{\zeta,m}^{-1}\circ\Psi_2\circ\phi_{\zeta,m}\|^\infty_{D_{\eps_0}(x_0)} < K_1 \|[\cR^m\zeta]\|^\infty_{U(0)},
$$
after possibly increasing the constant $K_1$.

Finally, due to compactness of the family $\mathcal C(\eps,M,D)$, we may assume that the map $\phi_{\zeta,m}$ has distortion, bounded by some constant that is independent from the pair $\zeta$ and the iterate $m\ge m_0$. Hence, there exists a constant $K>1$, such that
\begin{equation}\label{psi_norm_eq}
\|\Psi_1-\Psi_2\|^\infty_{\phi_{\zeta,m}(D_{\eps_0}(x_0))} < K \|[\cR^m\zeta]\|_{U(0)}^\infty.
\end{equation}
Furthermore, the domain $\phi_{\zeta,m}(D_{\eps_0}(x_0))$ has definite size in $\phi_{\zeta,m}(U(0))$, and there exists $\hat\eps>0$ independent of $\zeta$ and $m\ge m_0$, such that $D_{2\hat\eps}(\hat x)\subset \phi_{\zeta,m}(U(0))$, and both branches $\Psi_1$ and $\Psi_2$ are defined and analytic in $D_{2\hat\eps}(\hat x)$. Then, by the Koebe Theorem for branched coverings of bounded degree, we conclude that possibly after increasing the constant $K$, the inequality~(\ref{psi_norm_eq}) implies~(\ref{Psi12_diff_eq}), which completes the proof.

\end{proof}

Again, due to standard compactness considerations, there exists $\alpha=\alpha(B,D)>0$, such that we can always find two closed sectors $S_+, S_-\subset A$ with the common vertex at $\hat x$ and the size of the angles equal to $\alpha$, such that $S_+$ lies above the circle $\bbT$, $S_-$ is a reflection of $S_+$ about the circle $\bbT$, $S_+\cap S_- = \{\hat x\}$, and the two branches $\Psi_1$ and $\Psi_2$ are conformal in both sectors. It is then straightforward to carry out a smooth interpolation between these branches in both sectors to obtain:

\begin{proposition}
	\label{prop-comm3}
	There exists an annulus $A_1\Supset\TT$, and $K_1>0$ such that the following holds. In the notation of 
	Proposition~\ref{prop-comm2}, the map $\Psi|_\TT$ has a real-symmetric quasiregular extension $\tl\Psi$ defined in $A_1$, whose dilatation is bounded by
	$K_1||[\cR^m\zeta]||^\infty_{U(0)}$. The annulus $A_1$ and the constant $K_1$ depend only on the parameters $B$ and $D$.
\end{proposition}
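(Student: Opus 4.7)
The plan is to define $\tilde\Psi$ explicitly by gluing the analytic extension of $\Psi$ on $A\setminus(S_+\cup S_-)$ with a convex combination of the two branches $\Psi_1,\Psi_2$ inside the sectors, using a cutoff function depending only on the angular variable at $\hat x$. First I would fix a real-symmetric annulus $A_1\Supset\TT$, $A_1\Subset A$, such that $A_1\cap D_{\hat\eps}(\hat x)\subset S_+\cup S_-\cup(\TT\cap D_{\hat\eps}(\hat x))$; this is possible with $A_1$ depending only on $B,D$ since $\alpha=\alpha(B,D)$ and $\hat\eps=\hat\eps(B,D)$. Outside the two sectors, $\Psi$ admits a single-valued analytic extension that coincides locally with either $\Psi_1$ or $\Psi_2$ according to which arc of $\TT\setminus\{\hat x\}$ the nearby region is attached to; set $\tilde\Psi$ equal to this analytic extension there. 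Inside $S_+$, parametrize points as $\hat x+re^{i\theta}$ and pick a fixed $C^\infty$ cutoff $\phi(\theta)$ that equals $1$ on a neighborhood of the edge adjacent to the $\Psi_1$-arc and $0$ on a neighborhood of the edge adjacent to the $\Psi_2$-arc, and define $\tilde\Psi:=\phi\,\Psi_1+(1-\phi)\Psi_2$ in $S_+$. Choose the cutoff in $S_-$ as the reflection of $\phi$ across $\TT$; this guarantees real symmetry because $\Psi_1,\Psi_2$ are real-symmetric. The plateau conditions on $\phi$ at the two edges of each sector make $\tilde\Psi$ agree continuously (in fact $C^\infty$) with the analytic extension on $\partial S_\pm\setminus\{\hat x\}$.

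The quantitative step is the dilatation bound. Put $h:=\Psi_2-\Psi_1$ in $D_{\hat\eps}(\hat x)$. Since the cylinder map $\Psi$ is continuous on $\TT$, one has $h(\hat x)=0$, so on a slightly smaller disk Cauchy/Schwarz gives, using~(\ref{Psi12_diff_eq}),
$$|h(z)|\le C_1|z-\hat x|\cdot\|[\cR^m\zeta]\|^\infty_{U(0)},\qquad |h'(z)|\le C_1\|[\cR^m\zeta]\|^\infty_{U(0)}.$$
Because $\phi$ depends only on $\theta=\arg(z-\hat x)$, one computes $|\partial_{\bar z}\phi|,\,|\partial_z\phi|\le C_2/|z-\hat x|$. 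Since $\Psi_1,\Psi_2$ are holomorphic, $\partial_{\bar z}\tilde\Psi=(\partial_{\bar z}\phi)\,h$, and the cancellation between the $1/|z-\hat x|$ singularity of $\nabla\phi$ and the linear vanishing of $h$ yields
$$|\partial_{\bar z}\tilde\Psi(z)|\le C_1C_2\,\|[\cR^m\zeta]\|^\infty_{U(0)}.$$
For the denominator, $\partial_z\tilde\Psi=\partial_z\Psi_1+\phi\,\partial_z h+(\partial_z\phi)\,h$, and the last two terms are $O(\|[\cR^m\zeta]\|^\infty_{U(0)})$ by the same argument. The point $\hat x$ is a regular point of $\Psi_i$ because the gluing of \S\ref{sec:glue} is performed at a regular point of $\xi$; compactness of $\cC(\eps,M,D)$ from Lemma~\ref{CeMD_compactness_lemma} supplies a uniform lower bound $|\partial_z\Psi_1|\ge c>0$ on $S_\pm$ with $c=c(B,D)$. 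Hence $|\partial_z\tilde\Psi|\ge c/2$ once $\|[\cR^m\zeta]\|^\infty_{U(0)}$ is small, and
$$\Big|\frac{\partial_{\bar z}\tilde\Psi}{\partial_z\tilde\Psi}\Big|\le K_1\,\|[\cR^m\zeta]\|^\infty_{U(0)};$$
for large values of $\|[\cR^m\zeta]\|^\infty_{U(0)}$ the inequality is automatic after enlarging $K_1$.

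The main obstacle is precisely the apparent $1/|z-\hat x|$ singularity of $\nabla\phi$ at the common vertex of the two sectors. The whole construction hinges on the observation that this singularity is exactly compensated by the linear vanishing of $h=\Psi_2-\Psi_1$ at $\hat x$, which in turn follows from the continuity of the cylinder map on $\TT$ — the same feature that allowed the factorization $h(z)=(z-\hat x)g(z)$ with $\|g\|^\infty\lesssim\|[\cR^m\zeta]\|^\infty_{U(0)}$. A secondary point is that all constants ($\alpha$, $\hat\eps$, $c$, $C_1$, $C_2$) can be taken uniform over $\cC(\eps,M,D)$ by the compactness argument of Lemma~\ref{CeMD_compactness_lemma}, so $A_1$ and $K_1$ depend only on $B$ and $D$ as required.
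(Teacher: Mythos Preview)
Your proposal is correct and implements precisely the ``straightforward smooth interpolation between these branches in both sectors'' that the paper invokes but does not spell out. The crucial observation you make explicit---that $h=\Psi_2-\Psi_1$ vanishes at $\hat x$ because the cylinder map $\Psi$ is continuous on $\TT$, so the Schwarz-type linear decay $|h(z)|\lesssim|z-\hat x|\cdot\|[\cR^m\zeta]\|^\infty_{U(0)}$ exactly cancels the $|z-\hat x|^{-1}$ singularity of the angular cutoff---is the point that makes the interpolation yield a \emph{bounded} (indeed small) dilatation rather than a merely $L^1$ one; the paper's one-line appeal to ``standard'' interpolation tacitly relies on this.
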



Finally, we give a proof of Theorem~\ref{thm-near-comm2}.

For a real-symmetric annulus $A\subset\bbC\slash\bbZ$, let $\tl A\subset\bbC$ be the universal cover of $A$, such that $A = \tl A\slash\bbZ$. The uniform metric on the cylinder maps of $A$ is naturally defined as
$$
\dist_{A}(F,G) = \inf_{\tl F,\tl G} \|\tl F - \tl G\|_{\tl A}^\infty,
$$
where the infinum is taken over all lifts $\tl F, \tl G\colon \tl A\to\bbC$ of the cylinder maps $F$ and $G$ respectively. Similarly, for any $r\ge 0$, the $C^r$ distance between the restrictions of two such cylinder maps $F$ and $G$ to the circle is defined as
$$
\dist_{C^r}(F,G) = \inf_{\tl F,\tl G} \|\tl F - \tl G\|_{C^r(\bbR)},
$$
where the infinum is again taken over all lifts $\tl F, \tl G\colon \tl A\to\bbC$. 

\begin{proof}[Proof of Theorem~\ref{thm-near-comm2}]
Given an almost commuting pair $\zeta$ as in the statement of Theorem~\ref{thm-near-comm2}, let $m_0 = m_0(\zeta)$ be the same as in Proposition~\ref{prop-comm2}. Then, for any $m\ge m_0$, consider the corresponding cylinder map $\Psi$ from Proposition~\ref{prop-comm2}, obtained via a uniformization coordinate $\pi\colon C\to A$, and the quasiregular map $\tl\Psi$ from Proposition~\ref{prop-comm3}. 
Applying Measurable Riemann Mapping Theorem to $\tl\Psi$ (i.e., postcomposing with the solution of the corresponding Beltrami equation), we obtain an analytic multicritical circle map $\check\Psi$ which is defined in the cylinder $A_1$. 
Furthermore, due to Proposition~\ref{prop-comm3} and Theorem~\ref{thm-comm}, the map $\check\Psi$ is exponentially in $m$ close to $\Psi$ in the uniform metric $\dist_{A_1}$. 

Next, consider the map $\hat\Psi:= T_\alpha\circ\check\Psi$, where $T_\alpha\colon\bbC\slash\bbZ\to\bbC\slash\bbZ$ is the rigid rotation by $\alpha$, and $\alpha$ is chosen so that $\rho(\hat\Psi) = \rho(\Psi)$. Due to monotonicity property of the rotation numbers, the angle $\alpha\in\bbR$ can be chose so that
$$
|\alpha|\le \dist_{C^0}(\check\Psi,\Psi) \le \dist_{A_1}(\check\Psi,\Psi).
$$
Thus, the maps $\hat\Psi$ and $\Psi$ are still exponentially in $m$ close with respect to the uniform metric $\dist_{A_1}$.

Finally, let us pull back the renormalization $\cR\hat\Psi$ by the uniformizing coordinate $\pi\colon C\to A$ to obtain an analytic commuting pair $\zeta_{m+1}$. According to Lemma~\ref{lem:lip} and Remark~\ref{Lip_remark}, the renormalizations $\cR\Psi$ and $\cR\hat\Psi$ are exponentially close in $m$ with respect to the $C^0$-metric. Hence, it follows that $\zeta_{m+1}$ is exponentially in $m$ close to $\cR^{m+1}\zeta$.

As the last step, we observe that the maps $\Psi$ and $\hat{\Psi}$ have the same number of critical points, and the sum of the orders of these critical points does not exceed $2D$. In particular, due to Koebe Theorem, this implies that the map $\hat{\Psi}$ belongs to some normal family $\tl{\mathcal C}_{B,D}$ of maps of the annulus $A_1$. Furthermore, this family is completely determined by the constants $B$ and $D$, and is compact with respect to the metric $\dist_{C^r}$, for any $r\ge 0$. This implies that $\zeta_{m+1}$ belongs to some compact family $\mathcal C_{B,D}$ that is also determined by the constants $B$ and $D$.


\end{proof}

\ignore{

Applying Measurable Riemann Mapping Theorem to $\tl\Psi$, we obtain an analytic multicritical circle map $\hat\Psi$ which is exponentially in $m$ close to $\Psi$ due to Theorem~\ref{thm-comm}. 
Let us pull back the renormalization $\cR\hat\Psi$ by the uniformizing coordinate $C\to A$ to obtain an analytic commuting pair $\zeta_1$. Assuming that the maps $\Psi$ and $\hat\Psi$ are sufficiently close, it follows that the continued fraction expansions of their rotation numbers start with the same digit that is not greater than $B$. \hlc{(Some kind of compactness is needed for the first digits to match.)}
The latter implies that $\zeta_1$ is 
exponentially close to $\cR\zeta$, and thus an almost commuting pair is also ``near commuting''.

Let us summarize:
\begin{theorem}
  \label{thm-near-comm2}
  For any $B\ge 1$ and $D\ge 3$, there exist $\lambda_2\in(0,1)$ and a compact (with respect to $C^r$-metric, for any $r\ge 0$) collection $\cC_{B,D}$ of analytic multicritical commuting pairs such that the following holds.
For any integer $r\ge 0$, 
and any almost commuting pair $\zeta$ with rotation number bounded by $B$ and total criticality not greater than $D$, there exist $M>0$ and a sequence of analytic commuting pairs $\{\zeta_n\}_{n=1}^\infty\subset\cC_{B,D}$, such that the inequality 
$$
\dist_{C^r}(\zeta_n, \cR^n\zeta)\le M\lambda_2^n
$$
holds for any $n\ge 1$, and the pairs $\cR^n\zeta$ and $\zeta_n$ have the same rotation numbers.

\end{theorem}

\hlc{Explain, why $\cC_{B,D}$ is compact. We want the approximations in $\cC_{B,D}$ to have the same rotation numbers as $\cR^n\zeta$. Same for the next theorem.}

}

\subsection{Renormalizations of smooth pairs converge to almost commuting ones}


We state the following:
\begin{theorem}
  \label{analytic-convergence}
  For any positive integers $B\ge 1$, $D\ge 3$, there exist $\lambda_2\in(0,1)$ and a family $\cA_{B,D}=\mathcal C(\tl\eps,\tl M, \tl D)$ of almost commuting pairs such that the following holds. For any $C^3$ multicritical commuting pair $\zeta$ with rotation number of type bounded by $B$ and the total criticality not greater than $D$, there exist $M>0$ and a sequence of almost commuting pairs $\{\zeta_n\}_{n=1}^\infty\subset\cA_{B,D}$, such that for every $k\in\NN$ we have
\begin{equation}\label{smooth_conv_eq}
\dist_{C^0}(\zeta_k, \cR^k\zeta)\le M\lambda_2^k.
\end{equation}
Furthermore, the total criticalities of $\zeta_k$ and $\zeta$ are the same, the number of critical points of $\zeta_k$ is not larger than the number of critical points of $\zeta$, and $\rho(\zeta_k) = \rho(\cR^k\zeta)$.



\end{theorem}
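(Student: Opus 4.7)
The plan is to produce the approximating almost commuting pairs $\zeta_n$ by replacing the $C^3$-smooth diffeomorphism factors in the canonical decomposition of the branches of $\cR^n\zeta$ (given by Theorem~\ref{real_bounds_thm1}(3)) with their Taylor polynomials of degree three, centered at appropriate points on the orbit of $0$. At the exponentially small scales on which $\cR^n\zeta$ lives, the Taylor error is exponentially small in $n$, and a careful choice of centers will make the resulting analytic pair inherit the exact commutation of $\cR^n\zeta$ to order three at $0$, which is the defining property of an almost commuting pair.

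First I would apply Theorem~\ref{real_bounds_thm1} to fix $n_0=n_0(\zeta)$ so that for every $n\ge n_0$ every dynamical partition $\cI_m(\cR^n\zeta)$ has geometry bounded by some $C=C(B,D)$, and each branch of $\cR^n\zeta$ decomposes as
$$
\psi_{s+1}\circ p_s\circ\psi_s\circ\cdots\circ p_0\circ\psi_0,
$$
with $p_j(x)=x^{d_j}$, $\prod d_j\le D$, and $\psi_j$ a $C^3$ diffeomorphism of an interval of length $\ell_j\asymp\lambda_0^n$ with bounded distortion, for some $\lambda_0=\lambda_0(B,D)\in(0,1)$; standard $C^3$-distortion estimates give that the $\psi_j$, rescaled affinely to unit domain length, have uniformly bounded $C^3$-norm. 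Next I would replace each $\psi_j$ by its Taylor polynomial $\hat\psi_j$ of degree three, centered at the point on its domain lying on the forward orbit of $0$. The Taylor remainder estimate yields $\|\psi_j-\hat\psi_j\|_{C^0}=O(\ell_j^3)$; after the renormalization rescaling by a factor comparable to $\lambda_0^{-n}$ and using that the composition depth $s$ is bounded by a function of $D$ alone and the distortions are uniform, the composed $C^0$-error between the resulting analytic pair $\tilde\zeta_n$ and $\cR^n\zeta$ is of order $\lambda_2^n$ for some $\lambda_2=\lambda_2(B,D)\in(0,1)$. A real rotation adjustment of amplitude $O(\lambda_2^n)$, justified by monotonicity of the rotation number, then produces $\zeta_n$ with $\rho(\zeta_n)=\rho(\cR^n\zeta)$.

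Finally, because each Taylor substitution matches $\psi_j$ to order three at a point on the orbit of $0$, the chain rule implies that $\tilde\eta_n\circ\tilde\xi_n$ and $\tilde\xi_n\circ\tilde\eta_n$ both agree with the exact commuting composition $\eta_n\circ\xi_n=\xi_n\circ\eta_n$ of $\cR^n\zeta$ to order three at $0$, hence with each other to order at least three at $0$; this is precisely the definition of an almost commuting pair of order $\ge 3$. Membership of all $\zeta_n$ in a single class $\cA_{B,D}=\mathcal C(\tilde\eps,\tilde M,\tilde D)$ follows from the entireness of Taylor polynomials and of the power maps $p_j$, the uniform lower bound (from real bounds) on the size of the domains of the $\psi_j$ after rescaling, and the preservation of the total criticality and of the type of the rotation number. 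The main obstacle is the bookkeeping in the middle step: one must verify, through repeated Koebe and chain-rule estimates along the long composition, that the propagated $C^0$-error retains its $O(\lambda_2^n)$ size, and simultaneously that the Taylor centers can be chosen on the orbit of $0$ so that the commutation relation of $\cR^n\zeta$ at $0$ is faithfully transmitted to the analytic replacement.
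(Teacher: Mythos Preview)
Your approach is in the same spirit as the paper's — a Shuffling-Lemma-style replacement of the smooth factors by polynomials — but two concrete points in your plan do not work as written.

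\medskip
\noindent\textbf{Degree three is not enough.} By definition, an almost commuting pair of order $k$ is one for which $[\zeta](x)=o(x^k)$, where $k$ is the criticality of $\eta\circ\xi$ at $0$; this $k$ equals (at least) the order of the marked critical point, and for total criticality up to $D$ it can be any odd integer up to $D$. Matching each $\psi_j$ to order three yields only $[\tilde\zeta_n](x)=o(x^3)$, which is almost commutation of order three and is meaningful only when the marked critical point is cubic. For the general statement you must match derivatives up to order $D$; this is exactly why the paper uses \emph{osculating polynomials of degree $2D+2$}, interpolating the values of $f_j^{(m)}$, $m=0,\dots,D$, at the endpoints of each $I_j$, in order ``to ensure approximate commutation to order $D$''.

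\medskip
\noindent\textbf{One-point Taylor does not give the commutation.} For $\tilde\eta_n\circ\tilde\xi_n$ to agree with $\eta_n\circ\xi_n$ to the required order at $0$, you need $\tilde\eta_n$ to match $\eta_n$ near $\xi_n(0)$, whereas for $\tilde\xi_n\circ\tilde\eta_n$ you need $\tilde\eta_n$ to match $\eta_n$ near $0$. A single Taylor center per factor cannot do both. In the paper's construction the composition is broken into \emph{individual} iterates $f_j:I_j\to I_{j+1}$ (not the bounded-length decomposition of Theorem~\ref{real_bounds_thm1}(3)), and each $f_j$ is replaced by an osculating polynomial matching at \emph{both} endpoints of $I_j$. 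Since those endpoints lie on the forward orbits of both $0$ and $\xi_n(0)$, the compositions $\hat\eta_n\circ\hat\xi_n$ and $\hat\xi_n\circ\hat\eta_n$ each coincide with the true (commuting) composition to order $D$ at $0$, and hence with each other. The $C^0$-error control then comes from the classical Shuffling Lemma argument for long compositions, not from bounding the number of factors. Your bounded-factor decomposition would also require knowing that the long diffeomorphic pieces $\psi_j$ have uniformly bounded $C^3$-norm after rescaling, which is essentially a $C^3$ real-bounds statement that you would need to justify separately.

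\medskip
If you switch to osculating polynomials of degree $2D+2$ at both endpoints and work with the decomposition into single iterates, your outline becomes the paper's proof; the rotation-number correction via a small rigid rotation after gluing is the same in both.
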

\begin{proof}
The approach in the proof
follows the general outline of the standard Shuffling Lemma in one-dimensional renormalization theory (see \cite[Chapter VI, Theorem 2.3]{deMelo_vanStrien}), so we will be brief. 
Note also
a parallel with Theorem~A.6 of \cite{FM1}, where~(\ref{smooth_conv_eq}) is established for analytic approximations in $C^2$ metric by unicritical pairs, which are almost commuting to the $0$-th order, using a similar approach.

  Set $p\cR^k\zeta=(\eta_k,\xi_k)$. Let us consider the first map of the pair, the construction being identical for the second one. We have
  $$\eta_k=f_{q_{k+1}}\circ f_{q_{k+1}-1}\circ\cdots\circ f_2\circ f_1,$$
    where each of the maps $f_j$ is a restriction of either $\eta$ or $\xi$ to the appropriate element $I_j$ of the $k$-th dynamical partition of $\zeta$.
    We have two possibilities: either $f_j$ is a diffeomorphism, or it is a map with finitely many critical points. Let us assume that $k$ is large enough, so that $f_j$ can have at most one critical point, and the geometry of the level $k$ dynamical partition of $\zeta$ is bounded by the constant $C=C(B,D)$.
    
    We replace each of the diffeomorphisms $f_j$ in the composition by $g_j$, which is the standard osculating polynomial of degree $2D+2$, interpolating between the values of $f_j^{(m)}$, $m=0,\ldots, D$ at the endpoints of $I_j$. 

    In the case when $f_j$ has a critical point, it is a restriction of a composition $\phi\circ Q_m\circ\psi$ as in Definition~\ref{commut_pair_def} \ref{com_pairs_crit_property}. By inserting linear rescalings into the composition, let us rewrite it as
    \begin{equation}
      \label{eq:compfj}
f_j=\hat\phi\circ(x-a_j)^m\circ\hat\psi,
    \end{equation}
    where $\hat\phi$ and $\hat\psi$ are homeomorphisms between closed intervals, and $\hat\psi(I_j)=[-1,1]$.
    We again replace $\hat\psi$ and $\hat\phi$ with the osculating polynomials $p_1$, $p_2$ respectively,
    interpolating between the values of their derivatives up to order $D$ at the endpoints of their intervals of definitions. We let
    \begin{equation}
      \label{eq:compgj}
g_j=p_2\circ(x-a_j)^m\circ p_1.
    \end{equation}

    By selecting a large enough $k$, we can guarantee that all of the osculating polynomials defined above do not have any critical points on neighborhoods                       around their intervals of definition, whose size is some definite proportion of the diameter of the intervals.
    
    We then set
  $$\hat \eta_k=g_{q_{k+1}}\circ g_{q_{k+1}-1}\circ\cdots\circ g_2\circ g_1,$$
  and similarly for $\hat\xi_k$.

  Finally, we let $\tl \zeta_k=(\tl\eta_k,\tl\xi_k)$ stand for the pair $(\hat\eta_k,\hat\xi_k)$ rescaled by the linear change of coordinates
  $$x\mapsto \hat\xi_k(0)\cdot x.$$
  By construction, $\tl\zeta_k$ is an almost commuting pair of the appropriate order.
 Note, that the construction of an approximating pair $\tl\zeta_k$ is like the one carried out in the classical Shuffling Lemma. The only difference is that in the
  latter, osculating polynomials of degree $2$ are used, instead of the higher degree polynomials we use in the present paper, to ensure approximate commutation to order $D$.
 Applying the argument in the Shuffling Lemma {\it mutatis mutandis}, we see that 
  $$d_k\equiv \dist_{C^0}(\tl\zeta_k,\cR^k\zeta)\to 0$$
  at a geometric rate.



  Observe, that there exists $\eps=\eps(B)$ such that, for $n$ large enough, if $\dist_{C^0}(\zeta_1,\cR^m\zeta_0)<\eps$ for a pair $\zeta_1$,
  then $\chi(\zeta_1)=\chi(\cR^m\zeta_0)$ which  puts us in a position to apply the Lipschitz property of renormalization (Lemma~\ref{lem:lip}). Applying it inductively, as well as using straightforward considerations of real {\it a priori} bounds, we see that the first
  $\ell$ digits of the continued fractions of $\tl\zeta_k$ and $\cR^k\zeta_0$ coincide, and moreover, their respective critical points lie in the same intervals of the $\ell$-th dynamical partition for $\ell=O(|\log(d_k)|)$.

  It remains to ``correct'' the rotation number of $\tl\zeta_k$.
  Let $f=f_k$ be a corresponding map of the circle (\ref{eq:fzeta}), obtained by the gluing
$$[x,\tl\xi]\overset{\psi}{\longrightarrow}{\TT}.$$
  Consider the family
  $$g_{b}=f+b\mod\ZZ\text{ for }b\in\TT.$$
  Since the combinatorics of $f$ was already correct to the level $\ell$, there exists $b=O(d_k)$ for which $g_b$ has the desired rotation number.
  We lift this circle map to obtain  $$\eta_k=\phi^{-1}\circ g_{b}\circ \phi,$$
  and and similarly for $\xi_k$, to complete the construction.

\end{proof}


\subsection{Proofs of the main results for $C^3$ maps}
\label{subsec-proofs-smooth}

\begin{proof}[Proof of Theorem~\ref{th:rencov1}]


Throughout the proof we assume that the two $C^3$-smooth bi-cubic maps $f$ and $g$ are such that the forward orbit of any critical point under the dynamics of the map does not contain the other critical point. Otherwise, the renormalizations of the maps are eventually unicritical, and their exponential convergence follows directly from the main result of~\cite{GdM}.

Consider the space $\mathcal C_B$ of all \textit{analytic} bi-cubic multicritical circle maps with irrational rotation numbers of type bounded by $B$ and with topology, defined by the $C^2$ distance. 
Let $\tl{\mathcal I}_B$ be the union of all $\omega$-limit sets of all orbits of renormalization $\{\cR^nf\}_{n=0}^\infty$ with $f\in\mathcal C_B$. By
complex {\it a priori} bounds (Theorem~\ref{ComplexBounds_theorem}), the set $\tl{\mathcal I}_B$ is sequentially precompact in $\mathcal C_B$, hence, the closure of $\tl{\mathcal I}_B$ is compact. We denote this closure by $\mathcal I_B$. We note that by construction, the set $\mathcal I_B$ is forward invariant under renormalization. It also follows from Theorem~\ref{th:rencov2} that the renormalization orbit of any $f\in\mathcal C_B$ accumulates on the set $\mathcal I_B$, so the set $\mathcal I_B$ is called the \textit{attractor of renormalization} of combinatorial type bounded by $B$. 


Let $f$ be any $C^3$-smooth bi-cubic multicritical circle map with an irrational rotation number of type bounded by $B$. Then let $\{\zeta_m\}_{m=0}^\infty$ be the corresponding sequence of almost commuting pairs from Theorem~\ref{analytic-convergence}. In particular, this means that there exists a constant $M_2=M_2(f)>0$, such that 
\begin{equation}\label{conv_1_eq}
\dist_{C^0}(\zeta_k, \cR^k f)\le M_2(f)\lambda_2^k,
\end{equation}
for any integer $k\ge 0$, where $\lambda_2\in (0,1)$ is the same as in Theorem~\ref{analytic-convergence}, and $\rho(\zeta_k)=\rho(\cR^k\zeta)$.

Now, for any integer $m\ge 0$, let $\{\zeta_{m,k}\}_{k=0}^\infty$ be the sequence of analytic commuting pairs as in Theorem~\ref{thm-near-comm2}, corresponding to the almost commuting pair $\zeta_m$. In particular, for any integer $k\ge 0$, we have $\rho(\zeta_{m,k})=\rho(\cR^k\zeta_m) = \rho(\cR^{m+k}f)$, and 
\begin{equation}\label{conv_2_eq}
\dist_{C^0}(\zeta_{m,k}, \cR^k \zeta_m)\le M_1\lambda_1^k,
\end{equation}
where $\lambda_1\in (0,1)$ is the same as in Theorem~\ref{thm-near-comm2}, and the constant $M_1$ depends on the pair $\zeta_m$. Since, according to Theorem~\ref{analytic-convergence}, the pairs $\zeta_m$ can be chosen from some fixed set $\mathcal C(\tl\eps,\tl M,D)$ that depends only on $B$ and the total criticality $D$, it follows that $M_1$ can be assumed to depend only on $B$ and the total criticality, which is equal to $9$ in the bi-cubic case. 

Since according to Theorem~\ref{thm-near-comm2} and Theorem~\ref{analytic-convergence}, the pairs $\zeta_m$ and $\zeta_{m,k}$ are chosen from two $C^2$-compact sets that depend only on the constant $B$, it follows that the dynamical partitions of all pairs $\zeta_m$ and $\zeta_{m,k}$ at all levels, satisfy conditions~(1) and~(2) of Definition~\ref{bound_geom_def} for some constant $C=C(B)>0$. Without loss of generality, we may assume that the constant $C$ is not smaller than the one from Theorem~\ref{real_bounds_thm1}. Then, according to Theorem~\ref{real_bounds_thm1}, for all sufficiently large integers $m$, the geometry of all dynamical partitions of $\cR^m f$ is bounded by $C$.

Let the constant $L=L(C, B, D)>0$ (where $D=9$) be the same as in Lemma~\ref{lem:lip}. Next, combining~(\ref{conv_1_eq}) and~(\ref{conv_2_eq}) with the result of Lemma~\ref{lem:lip}, we get
$$
\dist_{C^0}(\cR^{m+k}f, \zeta_{m,k})\le M_2(f)\lambda_2^m L^k  + M_1\lambda_1^k,
$$
for all non-negative integers $k$ and all sufficiently large integers $m$. Since any integer $n\in\bbN$ can be represented as $n=m+k$, where $m, k, m/k \to\infty$ as $n\to\infty$, it follows that there exist constants $\lambda_3 = \lambda_3(B)\in (0,1)$ and $M_3 = M_3(f)>0$, and a sequence of analytic commuting pairs $\{\hat\zeta_n\}_{n=0}^\infty$ from a fixed $C^2$-compact set, such that $\rho(\hat\zeta_n)=\rho(\cR^nf)$, and 
$$
\dist_{C^0}(\cR^{n}f, \hat\zeta_{n})\le M_3(f)\lambda_3^n,
$$
for all non-negative integers $n$.

Finally, we apply Theorem~\ref{th:rencov2} that establishes exponential convergence of renormalizations of analytic bi-cubic pairs to the attractor $\mathcal I_B$. Following the same strategy as in the previous paragraph, we conclude the existence of a sequence of analytic pairs $\{\tl\zeta_n\}_{n=0}^\infty\subset\mathcal I_B$ and the numbers $\lambda_4 = \lambda_4(B)\in (0,1)$ and $M_4 = M_4(f)>0$, such that
\begin{equation}\label{conv_3_eq}
\dist_{C^0}(\cR^{n}f, \tl\zeta_{n})\le M_4(f)\lambda_4^n,
\end{equation}
for all non-negative integers $n$.

According to \cite{EsYam2021}, the attractor $\mathcal I_B$ is hyperbolic in the appropriate space of analytic bi-cubic maps. Hence, according to the Shadowing Lemma, there exist $K=K(B)>0$ and $\delta_0 = \delta_0(B)>0$, such that for any $0<\delta<\delta_0$, any $\delta$-pseudo orbit in $\mathcal I_B$ is $(K\delta)$-shadowed by a true orbit in $\mathcal I_B$. 
Inequality~(\ref{conv_3_eq}) combined with Lemma~\ref{lem:lip}, imply existence of a sufficiently large $n_0=n_0(B, f)$, such that
\begin{equation*}
\dist_{C^0}(\cR\tl\zeta_n, \tl\zeta_{n+1})\le (L+\lambda_4)M_4(f)\lambda_4^n<\delta_0,
\end{equation*}
for all $n\ge n_0$. Set $M_5(f) = (K L+K\lambda_4+1)M_4(f)$. Then, according to the Shadowing Lemma and the triangle inequality, for any 
$m\ge n_0$, there exists a commuting pair $f_m\in\mathcal I_B$, such that the inequality
\begin{equation}\label{conv_4_eq}
\dist_{C^0}(\cR^{n+m} f, \cR^nf_m) \le M_5(f)\lambda_4^{m}
\end{equation}
holds for any integer $n\ge 0$. In what follows, we may always assume that $m$ is sufficiently large, so that the right hand side in~(\ref{conv_4_eq}) is always smaller than the constant $\varepsilon$ from Lemma~\ref{combinatorics_vs_distance_lemma}. Hence, according to Lemma~\ref{combinatorics_vs_distance_lemma}, the commuting pairs $\cR^mf$ and $f_m$ have the same signature (i.e., $\sigma(\cR^mf) = \sigma(f_m)$, for all sufficiently large $m$).

Finally, assume that $g$ is another $C^3$-smooth bi-cubic circle map with $\sigma(g)=\sigma(f)$. Then, for any sufficiently large integer $m>0$ and any $n\ge 0$, we have 
\begin{equation}\label{conv_5_eq}
\dist_{C^0}(\cR^{n+m} g, \cR^ng_m) \le M_5(g)\lambda_4^{m},
\end{equation}
where $g_m\in\mathcal I_B$ is some analytic bi-cubic pair with $\sigma(g_m)=\sigma(f_m)$. Since the signatures of $f_m$ and $g_m$ match, it follows from Theorem~\ref{th:rencov2} and $C^2$-compactness of the attractor $\mathcal I_B$ that
\begin{equation}\label{conv_6_eq}
\dist_{C^0}(\cR^nf_m,\cR^ng_m) \le K_1\lambda^n,
\end{equation}
for some constants $K_1>0$, $\lambda\in(0,1)$ that depend only on $B$. Combining inequalities~(\ref{conv_4_eq}), (\ref{conv_5_eq}) and (\ref{conv_6_eq}) via the Triangle Inequality and setting $m\sim n$, we obtain exponential convergence
$$
\dist_{C^0}(\cR^kf,\cR^kg) \to 0,\qquad\text{as }k\to\infty,
$$
Which completes the proof of Theorem~\ref{th:rencov1}.
\end{proof}

\ignore{

\begin{proof}[Proof of Theorem~\ref{th:rencov1}]
  Putting together Theorems \ref{thm-near-comm2} and  \ref{analytic-convergence}, for every $B,D\in\NN$, there exist a $C^0$-compact set $\cC_1$ of analytic multicritical commuting pairs and $\eps<1$ such that the following holds. 
  For any $C^3$ map $h$ with the total criticality of $h$ bounded by $D$ and  $\rho(h)$ of type bounded by $B$ and for any $n\ge 1$, there exists $h_n\in \cC_1$ with $\sigma(\cR h_n)=\sigma(\cR^{n+1} h)$ such that
  \begin{equation}
  \label{eq:dist1}
  \dist_{C^0}(\cR^{n+1} h, \cR h_n)<C_1(h)\eps^n.
  \end{equation}
  \hlc{Existence of $\cC_1$ as above is not obvious and should follow from some Lipschitz property.}  
  

  Let $L$ be as in Lemma \ref{lem:lip}. For $f$ and $g$ as in the assumption of the theorem, let $f_n,g_n\in\cC_2$ be as in (\ref{eq:dist1}). By Lemma~\ref{lem:lip} and~(\ref{eq:dist1}), there exists
  $K_1>0$ such that for $n-m\geq 1$ 
\begin{equation}
    \label{eq:dist2}
    \dist_{C^0}(\cR^nf,\cR^{n-m} f_m)<L^{n-m}\dist_{C^0}(\cR^{m+1} f,\cR f_m)<K_1 L^{n-m}\eps^m,
    \end{equation}
  and
  \begin{equation}
    \label{eq:dist3}
    \dist_{C^0}(\cR^ng,\cR^{n-m} g_m)<L^{n-m}\dist_{C^0}(\cR^{m+1} g,\cR g_m)<K_1 L^{n-m}\eps^m.
        \end{equation}
  Since $\sigma(\cR f_m)=\sigma(\cR g_m),$ by Theorem~\ref{th:rencov2} and compactness of the class $\cC_1$, we have
  \begin{equation}
    \label{eq:dist4}
    \dist_{C^0}(\cR^{n-m}f_m,\cR^{n-m}g_m)<K_2\lambda^{n-m},
  \end{equation}
  for some constants $K_2>0$, $\lambda\in(0,1)$ that depend only on $B$ and $D$. The claim follows from (\ref{eq:dist2})-(\ref{eq:dist4}) and the Triangle Inequality by choosing a sequence of pairs $(n, m)$ such that $m\to\infty$, $(n-m)\to\infty$ and $(n-m)/m\to 0$.
\end{proof}

}

Finally, Theorem~\ref{main_rigidity_theorem} follows from Theorem~\ref{th:rencov2}, Theorem~\ref{th:rencov1} and 
Proposition~\ref{dFdM43_prop}.

\newpage

\bibliographystyle{amsalpha}
\bibliography{biblio}

\providecommand{\bysame}{\leavevmode\hbox to3em{\hrulefill}\thinspace}
\providecommand{\MR}{\relax\ifhmode\unskip\space\fi MR }
\providecommand{\MRhref}[2]{%
  \href{http://www.ams.org/mathscinet-getitem?mr=#1}{#2}
}
\providecommand{\href}[2]{#2}
\begin{thebibliography}{GMdM18}

\bibitem[BP78]{Beardon_Pommerenke_78}
A.~F. Beardon and Ch. Pommerenke, \emph{The {P}oincar\'{e} metric of plane
  domains}, J. London Math. Soc. (2) \textbf{18} (1978), no.~3, 475--483.
  \MR{518232}

\bibitem[Den32]{Denjoy}
A.~Denjoy, \emph{Sur les courbes definies par les {\'e}quations
  diff{\'e}rentielles {\`a} la surface du tore}, Journal de {M}ath{\'e}matiques
  {P}ures et {A}ppliqu{\'e}es (1932), 333 375.

\bibitem[dF99]{DeFar}
Edson de~Faria, \emph{Asymptotic rigidity of scaling ratios for critical circle
  mappings}, Ergodic Theory Dynam. Systems \textbf{19} (1999), no.~4,
  995--1035.

\bibitem[dFdM99]{FM1}
E.~de~Faria and W.~de~Melo, \emph{Rigidity of critical circle mappings {I}}, J.
  Eur. Math. Soc. (JEMS) \textbf{1} (1999), no.~4, 339--392.

\bibitem[dFdM00]{FM2}
\bysame, \emph{Rigidity of critical circle mappings {II}}, J. Amer. Math. Soc.
  (JAMS) \textbf{13} (2000), no.~2, 343--370.

\bibitem[dFG21]{dFGuar}
E.~de~Faria and P.~Guarino, \emph{Dynamics of multicritical circle maps}, S\~ao
  {P}aulo {J}. {M}ath. {S}ci. (2021).

\bibitem[dMvS93]{deMelo_vanStrien}
Welington de~Melo and Sebastian van Strien, \emph{One-dimensional dynamics},
  Ergebnisse der Mathematik und ihrer Grenzgebiete (3) [Results in Mathematics
  and Related Areas (3)], vol.~25, Springer-Verlag, Berlin, 1993. \MR{1239171}

\bibitem[EdF18]{Estevez_deFaria_18}
Gabriela Estevez and Edson de~Faria, \emph{Real bounds and quasisymmetric
  rigidity of multicritical circle maps}, Trans. Amer. Math. Soc. \textbf{370}
  (2018), no.~8, 5583--5616. \MR{3812112}

\bibitem[ESY20]{Est_Smania_Yampolsky_2020}
Gabriela Estevez, Daniel Smania, and Michael Yampolsky, \emph{Complex a priori
  bounds for multicritical circle maps with bounded type rotation number},
  (preprint) (2020), arXiv:2005.02377.

\bibitem[EY21]{EsYam2021}
G.~Estevez and M.~Yampolsky, \emph{Hyperbolicity of renormalization for
  bi-cubic circle maps with bounded combinatorics}, (preprint) (2021), arXiv.

\bibitem[FKS82]{FKS}
M.~J. Feigenbaum, L.~P. Kadanoff, and S.~J. Shenker, \emph{Quasiperiodicity in
  dissipative systems: a renormalization group analysis}, Phys. D \textbf{5}
  (1982), no.~2-3, 370--386.

\bibitem[GdM17]{GdM}
P.~Guarino and W.~de~Melo, \emph{Rigidity of smooth critical circle maps}, JEMS
  \textbf{19} (2017), 1729--1783.

\bibitem[GMdM18]{GMdM}
Pablo Guarino, Marco Martens, and Welington de~Melo, \emph{Rigidity of critical
  circle maps}, Duke Math. J. \textbf{167} (2018), no.~11, 2125--2188.

\bibitem[GY20]{GorYam}
Igors Gorbovickis and Michael Yampolsky, \emph{Rigidity, universality, and
  hyperbolicity of renormalization for critical circle maps with non-integer
  exponents}, Ergodic Theory Dynam. Systems \textbf{40} (2020), no.~5,
  1282--1334. \MR{4082264}

\bibitem[KY06]{KhYam}
D.~Khmelev and M.~Yampolsky, \emph{The rigidity problem for analytic critical
  circle maps}, Mosc. {M}ath. {J}. \textbf{6} (2006), no.~2, 317--351.

\bibitem[McM94]{McM-ren1}
C.~T. McMullen, \emph{Complex dynamics and renormalization}, Annals of
  Mathematics Studies, vol. 135, Princeton University Press, Princeton, NJ,
  1994.

\bibitem[McM96]{McM-ren2}
\bysame, \emph{Renormalization and 3-manifolds which fiber over the circle},
  Annals of Mathematics Studies, vol. 142, Princeton University Press,
  Princeton, NJ, 1996.

\bibitem[{\"O}RSS83]{ORSS}
Stellan {\"O}stlund, David Rand, James Sethna, and Eric Siggia, \emph{Universal
  properties of the transition from quasiperiodicity to chaos in dissipative
  systems}, Phys. D \textbf{8} (1983), no.~3, 303--342.

\bibitem[Yam99]{Ya1}
M.~Yampolsky, \emph{Complex bounds for renormalization of critical circle
  maps}, Erg. Th. \& Dyn. Systems \textbf{19} (1999), 227--257.

\bibitem[Yam01]{Yamp-towers}
\bysame, \emph{The attractor of renormalization and rigidity of towers of
  critical circle maps}, Comm. {M}ath. {P}hys. \textbf{218} (2001), no.~3,
  537--568.

\bibitem[Yam02]{Ya3}
M~Yampolsky, \emph{Hyperbolicity of renormalization of critical circle maps},
  Publ. Math. Inst. Hautes {\'E}tudes Sci. \textbf{96} (2002), 1--41.

\bibitem[Yam03]{Ya4}
M.~Yampolsky, \emph{Renormalization horseshoe for critical circle maps},
  Commun. {M}ath. {P}hys. \textbf{240} (2003), 75--96.

\bibitem[Yam17]{Yampolsky_unicritical}
Michael Yampolsky, \emph{Renormalization of unicritical analytic circle maps},
  C. R. Math. Acad. Sci. Soc. R. Can. \textbf{39} (2017), no.~3, 77--90.
  \MR{3702020}

\bibitem[Yam19]{Yam-bicubic}
\emph{Renormalization of bi-cubic circle maps}, C. {R}. {M}ath. {R}ep. {A}cad.
  {S}ci. {C}anada \textbf{41} (2019), 57--83.

\bibitem[Yoc84]{YocDen}
J.-C. Yoccoz, \emph{Il n'y a pas de contre-exemple de {D}enjoy analytique}, C.
  R. Acad. Sci. Paris S\'er. I Math. \textbf{298} (1984), no.~7, 141--144.

\end{thebibliography}
\end{document}